\documentclass[12pt,leqno]{amsart}
\usepackage{amsmath,amssymb,amsthm,amsfonts}
\usepackage{esint}
\usepackage{hyperref}
\hypersetup{colorlinks=true}

\numberwithin{equation}{section}

\DeclareMathOperator{\Lip}{Lip}
\DeclareMathOperator{\divg}{div}
\DeclareMathOperator{\supp}{supp}
\DeclareMathOperator{\loc}{loc}
\newcommand{\R}{\mathbb R}
\newcommand{\wt}{\widetilde}
\newcommand{\wh}{\widehat}
\newcommand{\abs}[1]{\left\vert#1\right\vert}
\newcommand{\br}[1]{\left(#1\right)}
\newcommand{\set}[1]{\left\{#1\right\}}
\newcommand{\om}{\Omega}
\newcommand{\pom}{\partial\Omega}
\newcommand{\pd}{\partial}
\newcommand{\dhalf}{D_t^{1/2}}
\newcommand{\HT}{H_t}
\newcommand{\BMO}{\mathrm{BMO}}
\newcommand{\BMOpar}{\mathrm{BMO}_{\mathrm{par}}}
\newcommand{\Dpar}{\mathbb D}
\newcommand{\LL}{\mathcal L}
\newcommand{\Uc}{\mathcal U}
\newcommand{\Cc}{\mathcal C}
\newcommand{\HS}{\dot{\mathcal{HS}}^{1}_{1,1/2}}
\newcommand{\Nt}{\widetilde N}
\newcommand{\Gam}{\Gamma}
\renewcommand{\d}{\,\mathrm d}
\newcommand{\Cnorm}[1]{\|#1\|_{\Cc}}
\newcommand{\Lipc}{\mathrm{Lip}_c}

\def\Yint#1{\mathchoice
	{\YYint\displaystyle\textstyle{#1}}%
	{\YYint\textstyle\scriptstyle{#1}}%
	{\YYint\scriptstyle\scriptscriptstyle{#1}}%
	{\YYint\scriptscriptstyle\scriptscriptstyle{#1}}%
	\!\dint}
\newcommand{\dint}{\int\!\!\!\!\!\int}
\def\YYint#1#2#3{{\setbox0=\hbox{$#1{#2#3}{\iint}$}
		\vcenter{\hbox{$#2#3$}}\kern-.51\wd0}}
\def\longdash{\mkern-1.5mu{-}\mkern-7.5mu{-}}
\def\fiint{\Yint\longdash}

\theoremstyle{plain}
\newtheorem{theorem}[equation]{Theorem}
\newtheorem{lemma}[equation]{Lemma}
\newtheorem{corollary}[equation]{Corollary}
\newtheorem{proposition}[equation]{Proposition}
\newtheorem{definition}[equation]{Definition}

\theoremstyle{remark}

\begin{document}

\title[The Regularity datum and Dirichlet--Regularity duality]{The Regularity datum
on time-varying graph domains and Dirichlet--Regularity duality}

\author{Martin Dindo\v{s}}
\address{School of Mathematics, 
The University of Edinburgh and Maxwell Institute of Mathematical Sciences, Edinburgh, UK}
\email{M.Dindos@ed.ac.uk}

\begin{abstract}
In this paper we address a basic question that has not been considered in the
literature on the parabolic Regularity problem. Let $\om$ be the region above
the graph of a $\Lip(1,\tfrac12)$ function $\phi$ on
$\R^{n-1}\times\R$. The corresponding $\dot L^p_{1,1/2}$  Regularity boundary space on
$\pom$ is defined by pulling the datum back to the flat space where we have the norm
$\|\nabla_y f\|_{L^p}+\|\dhalf f\|_{L^p}$.  As there is more than one graph parametrisation of such a domain, this could petentially lead to different notions of
$\dot L^p_{1,1/2}(\pom)$. This issue arises only on time-varying domains,
which is likely why the issue has not been noticed before.

We show by example that the boundary spaces can indeed disagree. We then
prove that an additional assumption $\dhalf\phi\in\BMOpar$ (called the
Lewis--Murray condition), resolves the issue completely: the spaces defined
through different graphs agree.\medskip

This gives us a well-defined Regularity datum space on every Lewis--Murray graph
domain. We then extend the duality between the parabolic Dirichlet and
Regularity problems, proved on cylinders by the author and E.~Nystr\"om,
 to such domains. Let
$\LL=-\pd_t+\divg(A\nabla\cdot)$ has bounded, measurable, uniformly elliptic
coefficients. We prove that if the adjoint Dirichlet problem
$(D_{\LL^*})_{p'}$ is solvable, then either the Regularity problem
$(R_\LL)_p$ is solvable or $(R_\LL)_q$ is not solvable for any
$q\in(1,\infty)$. 
\end{abstract}

\maketitle

\section{Introduction}\label{S.intro}

Let $\phi:\R^{n-1}\times\R\to\R$ be $\Lip(1,\tfrac12)$ with constant $\ell$,
that is,
\begin{equation}\label{E.lip}
  \abs{\phi(y,s)-\phi(u,\varsigma)}\ \le\ \ell\br{\abs{y-u}
   +\abs{s-\varsigma}^{1/2}} ,
\end{equation}
and let
\begin{equation}\label{E.dom}
  \om:=\set{(y,y_n,t)\in\R^{n-1}\times\R\times\R:\ y_n>\phi(y,t)},\qquad
  \pi(y,t):=\br{y,\phi(y,t),t} .
\end{equation}
For a function $F:\pom\to\R$, the boundary norm for the parabolic Regularity
problem on $\om$ is classically defined by pull-back through the map $\pi$:
\begin{equation}\label{E.datum}
  \|F\|_{p,e_n}:=\big\|F\circ\pi\big\|_{\dot L^p_{1,1/2}(\R^{n-1}\times\R)}
  =\|\nabla_y(F\circ\pi)\|_{L^p(\R^{n-1}\times\R)}
   +\|\dhalf(F\circ\pi)\|_{L^p(\R^{n-1}\times\R)}.
\end{equation}
The parabolic Regularity problem on time-varying graph domains was studied by
Hofmann and Lewis \cite{HL,HL99} and by Nystr\"om \cite{Nys06}. The boundary
space in these papers is defined through a single fixed graph representation
of the domain. Thus the definition refers to a coordinate system in which
$e_n$ is the vertical direction. Let
\begin{equation}\label{E.ell0}
  \ell_0:=\big\|\nabla_y\phi\big\|_{L^\infty(\R^{n-1}\times\R)}\ \le\ \ell
\end{equation}
be the spatial Lipschitz constant of $\phi$. The geometry below depends on
$\ell_0$, which may be much smaller than the constant $\ell$ above. If
$\nu\in\mathbb S^{n-1}$ is a unit vector whose angle with $e_n$ is less than
$\arctan(1/\ell_0)$, with $\arctan(1/0)$ understood to be $\pi/2$,
then no line in the direction $\nu$ meets $\pom$ twice, and hence
$\om$ is again a graph domain in the coordinate system whose vertical direction
is $\nu$, with a graph function $\phi_\nu$ and corresponding projection map
$\pi_\nu$. Here we choose any orthonormal coordinates on $\nu^\perp$. Two
such choices differ by a spatial rotation of $\R^{n-1}$, which preserves
Lebesgue measure, the $L^p$ norm of the spatial gradient and the $L^p$ norm of
the half-time derivative. Thus only the vertical direction $\nu$ matters.
The quantitative bounds for this change of coordinates are given in
Lemma \ref{L.graphgeometry}. We call such a direction $\nu$ \emph{admissible} and put
\begin{equation}\label{E.datumnu}
 \|F\|_{p,\nu}:=
 \|\nabla_y(F\circ\pi_\nu)\|_{L^p(\R^{n-1}\times\R)}
 +\|\dhalf(F\circ\pi_\nu)\|_{L^p(\R^{n-1}\times\R)}.
\end{equation}

\noindent Hence any two admissible directions give two graph representations
of the same boundary and two possible boundary spaces for the Regularity
problem. To our knowledge, the papers cited above do not compare these spaces
or ask whether the definition is independent of the graph direction. The
definition in
\cite[(20)--(22)]{Nys06}, in particular, fixes one projection throughout.\medskip

\noindent This question is different from the obstruction considered in
\cite[\S1]{Din}. On a time-varying domain the function $u(X,t)$ need not be
defined for all $t$ at a fixed spatial point $X$, so the half-time derivative
of a solution in the domain has no immediate meaning. The main theorem of
\cite{Din} shows on cylinders that this interior half derivative need not be
included in the estimate defining the Regularity problem. For a general
time-varying boundary, \cite[Definition 1.2]{Din} assumes the availability of
a boundary space carrying the required fractional half-regularity, but does
not construct such a space.\medskip

\emph{Hence we ask here directly whether the choice of coordinates, and therefore
of the graph defining $\om$, does matter or not.}\medskip

\noindent The transition map $\pi^{-1}\circ\pi_\nu$ of
$\R^{n-1}\times\R$ onto itself preserves the time variable because the
rotation carrying $e_n$ to $\nu$ is spatial. Its spatial part depends on $t$
through $\phi$, which prevents the transition map from commuting with
$\dhalf$. On a cylinder, $\phi$ does not depend on $t$. Neither does the
spatial part of the transition map, and the two norms in \eqref{E.datum} agree
trivially.\medskip

Our answer has two parts. First, under the additional assumption
$\dhalf\phi\in\BMOpar(\R^{n-1}\times\R)$, originating in the work of Lewis
and Murray \cite{LM}, the spaces agree.

\begin{theorem}\label{T.main}
Let $\phi$ be $\Lip(1,\tfrac12)$ with constant $\ell$ and with
$\dhalf\phi\in\BMOpar(\R^{n-1}\times\R)$, let $\om$ be as in \eqref{E.dom} and
let $\nu$ be admissible. Then the function $\phi_\nu$ which defines $\pom$ in
the coordinate system with vertical direction $\nu$ is also
$\Lip(1,\tfrac12)$, with
$\dhalf\phi_\nu\in\BMOpar(\R^{n-1}\times\R)$. Moreover, for every
$1<p<\infty$ and every measurable $F$ on $\pom$,
\begin{equation}\label{E.main}
  \|F\|_{p,\nu}\ \approx\ \|F\|_{p,e_n} ,
\end{equation}
with constants depending only on $n$, $p$, $\ell$, $d_\nu^{-1}$ and
$\|\dhalf\phi\|_{\BMOpar}$, where $d_\nu>0$ is the transversality margin
defined in \eqref{E.transversality}. Hence the space $\dot L^p_{1,1/2}(\pom)$ and its
norm, up to equivalence, do not depend on the direction in which $\om$ is
written as a graph.
\end{theorem}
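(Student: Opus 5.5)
The plan is to write the transition map explicitly and reduce \eqref{E.main} to boundedness of composition with a time-preserving, $t$-dependent bi-Lipschitz change of the spatial variable. Let $R$ be the spatial rotation taking $e_n$ to $\nu$. A boundary point $\pi_\nu(z,t)=R(z,\phi_\nu(z,t))+(0,t)$ equals $\pi(y,t)$ with $y=\Psi(z,t)$, where $\Psi(z,t)$ is the first $n-1$ coordinates of $R(z,\phi_\nu(z,t))$. So $F\circ\pi_\nu=(F\circ\pi)\circ T$ with $T(z,t)=(\Psi(z,t),t)$.

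\emph{Step 1 (geometry).} By Lemma \ref{L.graphgeometry}, $\phi_\nu$ is well defined and $\Lip(1,\tfrac12)$. Moreover, for each fixed $t$ the map $z\mapsto\Psi(z,t)$ is bi-Lipschitz on $\R^{n-1}$ with constants controlled by $\ell_0$ and $d_\nu^{-1}$. Its Jacobian is bounded above and below, and $|\Psi(z,t)-\Psi(z,s)|\lesssim|t-s|^{1/2}$. Thus $T$ is bi-Lipschitz in the parabolic metric, and the spatial-gradient parts of the two norms are comparable pointwise by the chain rule together with the change of variables $y=\Psi(z,t)$ at fixed $t$.

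\emph{Step 2 (the Lewis--Murray condition is inherited).} I would show $\dhalf\phi_\nu\in\BMOpar$ using the characterisation of Lewis--Murray functions by the Carleson-type condition on parabolic $\beta$-numbers or second differences. Concretely, $\dhalf\phi\in\BMOpar$ iff $\sup_Q |Q|^{-1}\int_Q\int |\phi(y,t)-\phi(y,s)-(\text{spatial affine correction})|^2|t-s|^{-2}\,ds\,dy\,dt<\infty$, i.e. the graph is well approximated by time-independent planes in a Carleson sense. This is a property of the set $\pom$ rather than of its parametrisation, and it is invariant under parabolic bi-Lipschitz reparametrisations that preserve $t$. Alternatively, $\phi_\nu$ is defined implicitly by $\langle \pi(\Psi(z,t),t),\nu\rangle$, and one can differentiate the identity $\phi_\nu(z,t)=\langle(\Psi,\phi(\Psi,t)),\nu\rangle$ in half-time increments.

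\emph{Step 3 (the half-time derivative, the main obstacle).} Set $g=F\circ\pi$. We must bound $\|\dhalf(g\circ T)\|_p\lesssim\|\nabla_y g\|_p+\|\dhalf g\|_p$, and the converse follows by symmetry, swapping the roles of $e_n$ and $\nu$ using Step 2. I would use the Hofmann--Lewis approach, replacing $\dhalf$ by the square function $\big(\int|h(z,t)-h(z,s)|^2|t-s|^{-2}ds\big)^{1/2}$, equivalently parabolic-extension square functions, whose $L^p$ norm is comparable to $\|\dhalf h\|_p$ modulo the gradient. Split
\[ g(\Psi(z,t),t)-g(\Psi(z,s),s)=[g(\Psi(z,t),t)-g(\Psi(z,t),s)]+[g(\Psi(z,t),s)-g(\Psi(z,s),s)]. \]
The first term, after the fixed-$t$ change of variables, is controlled by the same square function of $g$; this is acceptable since the measures are comparable, although the point $\Psi(z,t)$ moves with $t$, so a Whitney/averaging argument over spatial balls of radius $|t-s|^{1/2}$ is needed. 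The second term is bounded by $\int_0^1|\nabla g|\,|\Psi(z,t)-\Psi(z,s)|$. Mere Lipschitz control $|t-s|^{1/2}$ fails here, since it gives a divergent log. So I would write $\Psi(z,t)-\Psi(z,s)$ via $\phi_\nu$ increments and use that $\dhalf\phi_\nu\in\BMOpar$. This makes $|\phi_\nu(z,t)-\phi_\nu(z,s)|^2|t-s|^{-2}\,dz\,dt\,ds$ a Carleson measure in the upper half-space in the variable $r=|t-s|^{1/2}$. A Carleson embedding with the parabolic maximal function of $\nabla g$ then yields the bound $\lesssim\|\nabla_y g\|_p$. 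This last Carleson step, together with handling the moving base point in the first term, is where I expect the real work to lie.
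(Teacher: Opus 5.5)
\textbf{The gap is in the second term of your Step~3.} You bound $g(\Psi(z,t),s)-g(\Psi(z,s),s)$ by $|\Psi(z,t)-\Psi(z,s)|$ times spatial gradients of $g$ on the single time slice $\{s\}$. You then expect a Carleson embedding to give a bound $\lesssim\|\nabla_yg\|_p$. For $1<p\le2$ that bound is false, so no embedding can produce it.

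Take $n=2$ and $\psi(x,t)=\varepsilon\beta(t)$ with $\beta(t)=|t|^{1/2}\chi_0(t)$, where $\chi_0\in C_c^\infty(\R)$ equals $1$ on $[-1,1]$. This is $\Lip(1,\tfrac12)$ and Lewis--Murray, because $\dhalf\beta$ differs from a multiple of $\log|t|\,\mathbf 1_{\{|t|<1\}}$ by a bounded function. With $\nu$ and $\varkappa=\varepsilon\tan\gamma$ as in Proposition~\ref{P.counter} (the derivation of \eqref{E.framecx} uses only that $\psi$ depends on $t$ alone), you get $\Psi(z,t)=z/\cos\gamma+\varkappa|t|^{1/2}$ for $|t|\le1$.

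Let $g(y,s)=F_0(y)\chi_\epsilon(s)$ with $\chi_\epsilon(s)=\epsilon^{-1/p}\chi(s/\epsilon)$, $\supp\chi\subset[-1,1]$, and $F_0'\ge c>0$ on an interval $I_0$. Then $\|\nabla_yg\|_p$ does not depend on $\epsilon$. Now let $z/\cos\gamma$ range over the middle half of $I_0$, and let $4\epsilon\le|t|\le t_0$ with $t_0$ small. For $s\in\supp\chi_\epsilon$ both points $\Psi(z,t)$ and $\Psi(z,s)$ lie in $I_0$, and $|\Psi(z,t)-\Psi(z,s)|\ge\varkappa|t|^{1/2}/2$. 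Hence
\[
 \int_\R\frac{|g(\Psi(z,t),s)-g(\Psi(z,s),s)|^2}{|t-s|^2}\d s\ \gtrsim\ \epsilon^{1-2/p}|t|^{-1}.
\]
So the $p$-th power of the $L^p(\d z\d t)$ norm of your second term is $\gtrsim\epsilon^{p/2-1}\int_{4\epsilon}^{t_0}t^{-p/2}\d t$, which tends to $\infty$ as $\epsilon\to0$ when $p\le2$. Only $\|\dhalf g\|_p\approx\epsilon^{-1/2}$ is large here, as it must be. This term therefore genuinely requires the time regularity of $g$. A bound through values of $\nabla g$ on one time slice cannot capture it, and a parabolic maximal function of $\nabla g$ does not control spatial differences on a single slice.

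\textbf{What is missing, and how the paper does it.} To repair the argument you would have to smooth $g(\cdot,s)$ at scale $|t-s|^{1/2}$ and pay for the error with $\dhalf g$, that is, pass to an extension. This is the paper's route. It composes the caloric extension of $g$ with the Hofmann--Lewis regularised transition $\widehat T=\Pi^{-1}\circ\Pi_\nu$, whose increments obey \eqref{E.Tdisp}. The roughness of $\phi$ and $\phi_\nu$ then enters only through Carleson bounds on the coefficients and on the drift $M^{-1}\pd_tT$ (Proposition~\ref{P.coeff}). The half-time derivative of the trace is recovered from $\Nt(\nabla w)$ and $S(\nabla w)$ by Theorem~\ref{T.half}, with Propositions~\ref{P.Nw} and~\ref{P.Sw} supplying those two bounds.

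Two smaller points. First, your first term is easier than you think: it is exactly $(\mathcal S_tg)\circ T$. A fixed-time change of variables and Lemma~\ref{L.timesquare} handle it, with no averaging. Second, in Step~2 the ``property of the set'' argument needs the theorem of \cite{BHMN}, as in Corollary~\ref{C.BHMN}. Its direction from parabolic uniform rectifiability back to $\dhalf\phi_\nu\in\BMOpar$ is the deep one, not a routine invariance under $t$-preserving reparametrisations. Your implicit-differentiation alternative is as hard as the main estimate at the $\BMO$ endpoint: $\phi_\nu=F\circ\pi_\nu$ for $F(X,t)=X\cdot\nu$, while $\dhalf(F\circ\pi)=\nu_n\dhalf\phi$.
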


Theorem \ref{T.counter} shows that the condition
$\dhalf\phi\in\BMOpar$ cannot be removed.\medskip

\noindent We note that this condition is a property of the set $\om$.
By \cite[Theorem 1.2]{BHMN}, on a $\Lip(1,\tfrac12)$ graph it is equivalent
to parabolic uniform rectifiability of $\pom$ and to caloric measure lying in
parabolic $A_\infty$. Theorem \ref{T.main} therefore connects parabolic
uniform rectifiability with the definition of the datum for the parabolic
Regularity problem.\medskip

We prove Theorem \ref{T.main} using the heat extension of the datum, a change
of variables and the half-time derivative estimate of
\cite[Theorem 1.7]{DLPN}. Thus, remarkably, a statement concerning only the
boundary $\pom$ is proved by passing to an extension in one additional
dimension.\medskip

We conclude this part of the paper with a counterexample showing what happens
when the Lewis--Murray condition is removed.

\begin{theorem}\label{T.counter}
Assume $n\ge2$. There exist a function $\psi\in\Lip(1,\tfrac12)$ with
$\dhalf\psi\notin\BMOpar$, a domain
$$\om_\psi:=\set{(x,x_n,t):x_n>\psi(x,t)},$$
an admissible direction $\nu$ for $\om_\psi$, and a function $f$ on
$\pd\om_\psi$ whose pull-back $f\circ\pi_{e_n}$ belongs to
$C^\infty_c(\R^{n-1}\times\R)$, such that $f\in\dot L^{p}_{1,1/2}$ when
computed in the coordinate system with vertical direction $e_n$, that is,
$$\|\nabla_y (f\circ\pi_{e_n})\|_{L^p}+\|\dhalf (f\circ\pi_{e_n})\|_{L^p}<\infty$$
for every $1<p<\infty$, but $f\notin\dot L^{2}_{1,1/2}$ when computed in the
coordinate system with vertical direction $\nu$, that is,
$$\|\nabla_y (f\circ\pi_{\nu})\|_{L^2}+\|\dhalf (f\circ\pi_{\nu})\|_{L^2}=\infty.$$
The failure is confined to the half-derivative: for every admissible $\nu$ and
every $1\le p\le\infty$ one has $\|\nabla_y(f\circ\pi_\nu)\|_{L^{p}}
\approx\|\nabla_y(f\circ\pi_{e_n})\|_{L^{p}}$, with no hypothesis on $\psi$
beyond $\Lip(1,\tfrac12)$.
\end{theorem}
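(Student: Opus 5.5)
The plan is to build the counterexample with a graph that depends only on time, so that every spatial direction is admissible and the transition map is an explicit time-dependent translation. Take $\psi(x,t):=w(t)$ with $w$ a lacunary Weierstrass-type function,
$$w(t)=\sum_{k\ge1}2^{-k/2}\cos(2^k t),$$
or a bounded compactly modified version of it.

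\textbf{Properties of $w$.} I will use two standard facts.
\begin{itemize}
\item $w$ is Hölder-$\tfrac12$. Split the sum at $2^k\approx|t-\tau|^{-1}$ and bound the low and high frequencies separately. Hence $\psi\in\Lip(1,\tfrac12)$ with $\nabla_x\psi=0$, so $\ell_0=0$.
\item $w\notin H^{1/2}_{\loc}$. Every dyadic block contributes a fixed amount to $\int\!\!\int_{I\times I}|w(t)-w(\tau)|^2|t-\tau|^{-2}\,\d t\d\tau$ on any interval $I$. This follows from lacunarity and a Littlewood--Paley or Zygmund-type argument, and it gives $\sum_k 1=\infty$.
\end{itemize}
In particular $\dhalf\psi$ is not locally in $L^2$, hence not in $\BMOpar$, since $\BMOpar\subset L^2_{\loc}$.

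\textbf{The direction and the pull-backs.} Fix $\theta\in(0,\pi/2)$ and let $\nu=\cos\theta\, e_n+\sin\theta\, e_1$. Write $c=\cos\theta$ and $s=\sin\theta$. A point $(x,w(t),t)$ of $\pd\om_\psi$ has new coordinates $x_1'=c\,x_1-s\,w(t)$ and $x''$ unchanged. So the transition map is
$$(x',t)\mapsto\Big(\tfrac{x_1'+s\,w(t)}{c},\,x'',\,t\Big),$$
and $\phi_\nu=w/c$ up to sign. Choose
$$f\circ\pi_{e_n}(x,t)=h(x_1)\,\eta(x'')\,\chi(t)\in C^\infty_c,$$
where $\chi\in C_c^\infty$ equals $1$ on an interval $I$, $\eta=1$ near $0$, and $h(x_1)=x_1$ on $[-R,R]$ with $R>2+2\sup|w|/c$. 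Then $F:=f\circ\pi_\nu$ is
$$F(x',t)=h\Big(\tfrac{x_1'+s\,w(t)}{c}\Big)\eta(x'')\chi(t).$$
On the set $\{|x_1'|<1,\ |x''|\text{ small}\}$ it equals
$$\tfrac1c\,x_1'\chi(t)+\tfrac sc\,w(t)\chi(t).$$

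\textbf{Divergence of the half-derivative.} Write
$$\|\dhalf F\|_{L^2}^2\approx\int\!\!\int\!\!\int\frac{|F(x',t)-F(x',\tau)|^2}{|t-\tau|^2}\,\d x'\d t\d\tau,$$
which holds by Plancherel in $t$. Restrict the integral to $x'$ in the small box where the linear formula holds.
\begin{itemize}
\item The term $x_1'\chi$ gives a finite contribution, because $\chi$ is smooth.
\item The term $\tfrac sc(w\chi)(t)-\tfrac sc(w\chi)(\tau)$ gives $+\infty$, because $w\chi\notin H^{1/2}$.
\end{itemize}
By $|a+b|^2\ge\tfrac12|a|^2-|b|^2$ the total is $+\infty$. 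So $\|\dhalf(f\circ\pi_\nu)\|_{L^2}=\infty$. Meanwhile $f\circ\pi_{e_n}\in C_c^\infty$ has finite $\dot L^p_{1,1/2}$ norm for all $p$.

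\textbf{The gradient comparison, for general $\psi\in\Lip(1,\tfrac12)$.} For fixed $t$, the spatial transition map $x\mapsto x'$ is bi-Lipschitz with constants controlled by $\ell_0$ and $d_\nu$. This comes from the transversality bounds of Lemma~\ref{L.graphgeometry}: the implicit function has spatial Jacobian bounded above and below. Chain rule and change of variables in $x$, at each fixed $t$, then give $\|\nabla(f\circ\pi_\nu)\|_{L^p}\approx\|\nabla(f\circ\pi_{e_n})\|_{L^p}$ for every $1\le p\le\infty$.

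\textbf{Main obstacle.} The delicate step is the rigorous lower bound $w\chi\notin H^{1/2}$, equivalently $w\notin H^{1/2}_{\loc}$. I would prove it by testing against a smooth bump and using that the Fourier coefficients at $2^k$ have size $2^{-k/2}$. The bump only spreads each coefficient by $O(1)$ in frequency, so the dyadic blocks stay separated and $\sum_k 2^k|\widehat{w\chi}|^2$ restricted near $2^k$ is $\gtrsim1$ for each $k$.
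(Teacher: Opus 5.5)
Your proposal is correct, but it reaches the divergence of the half-derivative by a different route from the paper.

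\textbf{The paper's route.} It takes an arbitrary nonzero profile $F_0\in C_c^\infty(\R)$ and uses the small-translation bound $\|\mathsf T_hF_0-F_0\|_{L^2}^2\ge c_2h^2$. This gives $\|G(\cdot,t)-G(\cdot,s)\|_{L^2}^2\ge c\varkappa^2|b(t)-b(s)|^2-C|t-s|^2$, but only for $t,s$ in an interval where $|\chi|$ is bounded below and with $|t-s|$ small. It therefore has to prove the \emph{local} divergence \eqref{E.bdivlocal} of the Gagliardo integral of $b$, via Parseval on the torus and the self-similarity $r_K(t)=2^{-K/2}b(2^Kt)$.

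\textbf{Your route.} You make the profile linear on a window larger than the shift $s\,w(t)/c$. On a spatial box the pull-back is then exactly $\frac{x_1'}{c}\chi(t)+\frac sc(w\chi)(t)$ for \emph{all} $t$. The first term has finite Gagliardo energy, so the divergence reduces to the \emph{global} statement $w\chi\notin\dot H^{1/2}(\R)$. Your Fourier argument proves this correctly: $\widehat{w\chi}=\frac12\sum_k2^{-k/2}\big(\hat\chi(\cdot-2^k)+\hat\chi(\cdot+2^k)\big)$. The translates are separated by the lacunary gaps up to Schwartz tails, and each contributes $\gtrsim\|\hat\chi\|_{L^2}^2$ to $\int|\xi||\widehat{w\chi}(\xi)|^2\,\d\xi$.

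\textbf{What each buys.} Your argument is shorter and avoids both the translation estimate and the rescaling step. The paper's argument shows more: the failure occurs for \emph{every} nonzero smooth tensor-product datum $F_0\otimes g_0\otimes\chi$, not only for a specially arranged linear profile. Its local estimate is also the more robust fact about $b$. The gradient comparison in your proposal is the same as Lemma \ref{L.spatial}.

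\textbf{Two small remarks.} First, your second bullet on local dyadic contributions is never used. Also, $\dhalf\psi\notin\BMOpar$ follows directly from periodicity and $\sum_m|m||\hat w(m)|^2=\infty$, as in the paper. Second, $\phi_\nu$ is not ``$w/c$ up to sign'' but $x_1'\tan\theta+w(t)/c$. This is harmless, because you only use the transition map, which you computed correctly.
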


In the second half of the paper we establish a duality principle relating
solvability of the $L^p$ Regularity boundary value problem for
$\LL=-\pd_t+\divg(A\nabla\cdot)$ on a Lewis--Murray graph domain $\om$ and
solvability of the $L^{p'}$ Dirichlet boundary value problem for the adjoint
operator $\LL^*$ on $\om$. There are
two relevant precedents. For constant-coefficient symmetric systems on
time-varying graph domains, \cite[Theorem 4]{Nys06} proves a direct duality
under Dirichlet solvability on both sides of the graph. For arbitrary bounded,
measurable coefficients, \cite{DN} proves the dichotomy used here on Lipschitz
cylinders. Theorem \ref{T.dual} extends the latter result to Lewis--Murray
graph domains, assuming Dirichlet solvability only on $\om$ and Regularity
solvability at one exponent. The implication from Regularity solvability to
adjoint Dirichlet solvability on time-varying domains was proved in
\cite{DD}.\medskip

The cylindrical dichotomy of \cite{DN} was one of the key ingredients in
\cite{DLP}, which resolved the parabolic Regularity problem on Lipschitz
cylinders for operators whose coefficients satisfy a possibly large Carleson
condition, the parabolic analogue of the DKP condition. The author and several
coauthors are working on the corresponding question on time-varying
Lewis--Murray graph domains. We expect Theorem \ref{T.dual} to play the same
role there.\medskip

\begin{theorem}\label{T.dual}
Let $\phi$ be $\Lip(1,\tfrac12)$ with constant $\ell$ and with
$\dhalf\phi\in\BMOpar(\R^{n-1}\times\R)$, and let $\om$ be as in
\eqref{E.dom}. Let
$$\LL:=-\pd_t+\divg\br{A\nabla\cdot}$$
be a parabolic operator on $\om$ whose coefficient matrix $A$ is bounded,
measurable, and uniformly elliptic, and let $1<p<\infty$. Assume that the
Dirichlet problem $(D_{\LL^*})_{p'}$ is solvable. Then either $(R_{\LL})_p$ is
solvable, or $(R_{\LL})_q$ is not solvable for any $1<q<\infty$.
\end{theorem}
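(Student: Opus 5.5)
The plan is to follow the cylindrical argument of \cite{DN} and replace every step that used the product structure of the cylinder by a statement on the Lewis--Murray graph domain. First I fix the setting. By Theorem \ref{T.main}, the space $\dot L^p_{1,1/2}(\pom)$ is well defined up to equivalence, so all localisation arguments may use local graph representations in admissible directions. I then flatten $\om$ with the Lewis--Murray/Dahlberg--Kenig--Stein type map $\rho(y,\lambda,t)=(y,\lambda+P_{\gamma\lambda}\phi(y,t),t)$. Under $\dhalf\phi\in\BMOpar$, this map carries $\LL$ to an operator $\wt\LL$ on $\R^n_+$. Its principal coefficients are bounded and elliptic, and it has a drift term $B\cdot\nabla$ with $\lambda|B|^2\,dy\,d\lambda\,dt$ a Carleson measure; this is the setting of \cite{DLPN}. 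Solvability of $(D_{\LL^*})_{p'}$ and $(R_\LL)_q$ transfers to the corresponding problems for $\wt\LL^*$ and $\wt\LL$ on the upper half space. The nontangential maximal functions are comparable because $\rho$ is bi-Lipschitz in the parabolic metric. The boundary datum space is also preserved, because $\rho$ restricted to $\lambda=0$ is exactly $\pi$.

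Second, I reproduce the two pillars of \cite{DN}. The first is the localisation estimate. If $(D_{\LL^*})_{p'}$ holds, then for $u$ solving $\LL u=0$ with Regularity datum $f$, the quantity $\|\Nt(\nabla u)\|_{L^p}$ is controlled by $\|f\|_{\dot L^p_{1,1/2}}$ plus a ``small-scale'' good-$\lambda$ term. This is obtained by duality: pair $\nabla u$ against an $L^{p'}$ test field, represent the pairing through an adjoint solution $v$ with $L^{p'}$-controlled nontangential maximal function, and integrate by parts. The extra term is the interior half-time derivative, or more precisely the boundary term involving $\HT\dhalf$ of $u$ along the flattened slices. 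This term is exactly what made the problem on cylinders a dichotomy. The second pillar is the reverse Hölder / extrapolation step. If $(R_\LL)_q$ holds for some $q$, then that estimate, combined with $(D_{\LL^*})_{p'}$ (which self-improves to $p'-\varepsilon$), gives a reverse Hölder inequality for $\Nt(\nabla u)$ on boundary cubes with local data. Shen's real-variable argument then upgrades this to solvability at $p$. So the dichotomy reduces to the implication ``$(R_\LL)_q$ for some $q$ $\Rightarrow$ the local reverse Hölder bound at exponent $p$''.

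The main obstacle is controlling the half-time derivative on the domain. On a cylinder one can take $\dhalf$ of $u$ along vertical lines; here $u$ is not defined on full time lines. I would overcome this after flattening. I apply \cite[Theorem 1.7]{DLPN} to $\wt u=u\circ\rho$ to bound $\|\Nt(\lambda\,\dhalf\HT\nabla\wt u)\|_{L^p}$ and the square function of $\dhalf\wt u$ by $\|\Nt(\nabla\wt u)\|_{L^p}+\|f\|_{\dot L^p_{1,1/2}}$, with constants depending on $\|\dhalf\phi\|_{\BMOpar}$. The drift coming from $\partial_t P_{\gamma\lambda}\phi$ is then absorbed using the Carleson property. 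I would also have to check that localised cutoffs $\eta f$ remain in $\dot L^p_{1,1/2}(\pom)$ with norm bounded by the local oscillation of $f$. This again uses the $\BMOpar$ hypothesis through Theorem \ref{T.main}'s commutator estimates, and it is where I expect the most technical work.
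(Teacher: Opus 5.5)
Your architecture for the case $q<p$ matches the paper's Proposition \ref{P.dualqp}: self-improve $(D_{\LL^*})_{p'}$ to an exponent $\bar p>p$, obtain a reverse H\"older inequality for $\Nt(\nabla u)$, and feed it into Shen's real-variable argument with base exponent $q$. However, the ingredient that produces the reverse H\"older inequality is missing, and the mechanism you put in its place would fail.

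The paper never runs a duality argument on a solution with general datum. It proves Theorem \ref{T.locest}: if $\LL u=0$ in $J_{K_0r}\cap\om$ with $u=0$ on $J_{K_0r}\cap\pom$, then $\|\Nt(|\nabla u|\mathbf 1_{J_r})\|_{L^p(\pom)}$ is at most $Cr^{(n+1)/p}$ times the $L^1$ average of $|\nabla u|$ over $J_{K_0r}\cap\om$. Three features of that proof matter:
\begin{itemize}
\item The duality is with the Carleson functional, $\|\wt{\mathcal C}(\delta G)\|_{L^{p'}}\le1$, not with $L^{p'}$ test fields.
\item The adjoint object is the Poisson--Dirichlet solution of $\LL^*v=-\divg G$, whose bound $(PD_{p'})^{\LL^*}$ is deduced from $(D_{\LL^*})_{p'}$ in Lemma \ref{L.PD}.
\item Since $u$ vanishes on the boundary patch and $v$ vanishes on $\pom$, cross-testing leaves only the cutoff terms $I_0,I_1,I_2$. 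These are handled by boundary local boundedness and Poincar\'e for $u$, and by $\Nt(v)$ plus Caccioppoli for $v$.
\end{itemize}
No half-time-derivative term ever arises. Applying this to the solution minus its constant datum on $\pi(a_2Q)$ gives Lemma \ref{L.offdiag}, and $(R_\LL)_q$ enters only as the base estimate \eqref{E.Tpzero}.

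Your substitute, controlling a boundary term like $\HT\dhalf u$ through \cite[Theorem 1.7]{DLPN}, is not available here. That result, like Theorem \ref{T.SN}, requires $x_n|\nabla_XA|^2+x_n^3|\pd_tA|^2$ to be Carleson (see \eqref{E.carlA}). In Theorem \ref{T.dual} the matrix $A$, and hence its pull-back $\wt A$, is merely bounded and measurable. Nor is such a term what ``makes the problem a dichotomy''. The statement is simply the contrapositive of ``$(D_{\LL^*})_{p'}$ and $(R_\LL)_q$ for some $q$ imply $(R_\LL)_p$''.

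The second gap is the case $q>p$, which your outline does not cover. Shen's good-$\lambda$ argument (Theorem \ref{T.SobolevShen}) only extrapolates upward, from the base exponent $q$ to exponents in $(q,\bar p)$. The paper descends through the Hardy--Sobolev endpoint instead. For a smooth $(1,\tfrac12,\kappa)$-atom $a$ on a ball $B$ of radius $r$, Lemma \ref{L.atomendpoint} shows $\|\Nt(\nabla u_a)\|_{L^1(\pom)}\le C$:
\begin{itemize}
\item on $\pi(8B)$, by $(R_\LL)_q$ and H\"older;
\item on the annuli $\pi(2^{j+1}B\setminus2^jB)$, where the datum vanishes, by Theorem \ref{T.locest}, the boundary Carleson estimate and the parabolic-measure decay \eqref{E.omegadecay}, which together give the summable bound $2^{-j\beta}$.
\end{itemize}
Finite-atom interpolation between $\HS$ and $\dot L^q_{1,1/2}$ (Theorem \ref{T.finiteinterp}) then yields $(R_\LL)_p$.

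Finally, the localisation of the datum, which you expect to be the hardest step, needs neither $\dhalf\phi\in\BMOpar$ nor Theorem \ref{T.main}. Lemma \ref{L.datumloc} works entirely in the fixed parameter space $\R^{n-1}\times\R$, using the time-difference square function $\mathcal S_t$ and the auxiliary function $G_f$.
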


Together, Theorem \ref{T.dual} and \cite{DD} give the two implications
\[ (R_{\LL})_p \implies (D_{\LL^*})_{p'} \quad\mbox{and}\quad
   (D_{\LL^*})_{p'} + (R_{\LL})_q \implies (R_{\LL})_p, \]
for parabolic operators on time-varying graph domains of Lewis--Murray type.\medskip

\noindent The Lewis--Murray condition enters Theorem \ref{T.dual} through
Definition \ref{D.bdyspace} and nowhere else. Sections \ref{S.loc} and
\ref{S.extrapolation} use only the $\Lip(1,\tfrac12)$ character of $\phi$.
Without this condition, the right-hand side of \eqref{E.Rp} would refer to a
particular coordinate system by Theorem \ref{T.counter}. This is why the
condition is retained in the statement.\medskip

In the process of proving this result we establish a localisation theorem. It
says that, for a solution that vanishes on a boundary cylinder, the
nontangential maximal function of its gradient on a smaller cylinder is
controlled by an interior average of that gradient. It is the Dirichlet
counterpart of
\cite[Theorem 1.2]{DLP2}, where the same statement is proved for the Neumann
problem on a Lipschitz cylinder, under solvability of both $(N_\LL)_p$ and
$(D_{\LL^*})_{p'}$.

\begin{theorem}[the local estimate]\label{T.locest}
Let $1<p<\infty$ and let $\om$ be a graph domain as in Theorem \ref{T.dual}
on which $(D_{\LL^*})_{p'}$ is solvable. There is a constant $C$, depending
only on $n$, $p$, $\ell$, $\|\dhalf\phi\|_{\BMOpar}$, the ellipticity constants
of $\LL$, and the solvability constant in $(D_{\LL^*})_{p'}$, such that for
every backward parabolic cylinder
$J_r=J_r(P,\tau):=\set{(Y,s):|Y-P|<r,\ \tau-r^2<s\le\tau}$
centred at $(P,\tau)\in\pom$, every $r>0$, and every weak solution $u$ of
$\LL u=0$ in $J_{K_0r}\cap\om$ with zero Dirichlet data on $J_{K_0r}\cap\pom$,
\begin{equation}\label{E.locest}
  \big\|\Nt\br{\abs{\nabla u}\mathbf 1_{J_r}}\big\|_{L^p(\pom)}
  \ \le\ C\,r^{(n+1)/p}\,\fiint_{J_{K_0r}\cap\om}\abs{\nabla u} ,
\end{equation}
for some $K_0=K_0(n,\ell)>1$.
\end{theorem}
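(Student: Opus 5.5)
We prove \eqref{E.locest} by duality, in the spirit of the Neumann analogue \cite[Theorem 1.2]{DLP2}. The argument reduces it to an estimate for adjoint solutions with $L^{p'}$ Dirichlet data, combined with interior regularity.

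\emph{Normalisation and splitting.} By translation and parabolic dilation, which preserve the $\Lip(1,\tfrac12)$ and $\BMOpar$ constants, we may take $(P,\tau)=0$ and $r=1$. We split $\Nt(\abs{\nabla u}\mathbf 1_{J_1})$ into two parts: the part coming from cones truncated at height comparable to $1$, and the remainder. The remainder is bounded pointwise by interior Caccioppoli and Moser estimates, so it is controlled by $\fiint_{J_{K_0}\cap\om}\abs{\nabla u}$. It therefore suffices to bound the $L^p$ norm of a truncated nontangential function of $\nabla u$ over $J_2\cap\pom$.

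\emph{Linearisation.} We linearise the nontangential maximal function in the standard way. For each boundary point $Q$ we choose a point $Z(Q)$ in the truncated cone and a unit vector $\vec e(Q)$. Testing against $g\in L^{p'}(\pom)$ supported in $J_2$ with $\|g\|_{L^{p'}}\le1$, we must bound
\[
  \int_{\pom} g(Q)\,\fiint_{W(Z(Q))}\nabla u\cdot\vec e(Q)\,dQ ,
\]
where $W$ denotes Whitney regions. Moving the derivative, this becomes $\iint_\om u\,\divg\vec h$ for a vector field $\vec h$ supported in a Carleson region near $J_2$. This is exactly the setup of \cite{DN}.

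\emph{Adjoint problem.} We let $v$ solve $\LL^*v=\divg\vec h$ in $\om$ with $v=0$ on $\pom$. The key input is the estimate
\[
  \|\Nt(v)\|_{L^{p'}}+\|S(v)\|_{L^{p'}}\lesssim\|g\|_{L^{p'}},
\]
which we obtain from solvability of $(D_{\LL^*})_{p'}$ via the inhomogeneous-data argument of \cite{DN}. That argument goes through a Carleson-measure/tent-space duality and needs only the $\Lip(1,\tfrac12)$ geometry and good $\lambda$ estimates. Then
\[
  \iint_\om u\,\divg\vec h=\iint_\om u\,\LL^*v .
\]
We insert a cutoff $\eta$ that equals $1$ on $J_{3}$ and is supported in $J_{K_0}$. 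Integrating by parts, the boundary terms vanish because both $u$ and $v$ have zero data on $J_{K_0}\cap\pom$. What remains are error terms where derivatives fall on $\eta$, of the form
\[
  \iint (A\nabla u\cdot\nabla\eta)\,v,\qquad \iint u\,A^*\nabla v\cdot\nabla\eta,\qquad \iint uv\,\pd_t\eta .
\]
These terms live away from the support of $\vec h$, in a region where $v$ is an adjoint solution vanishing on the boundary. By boundary H\"older continuity and the bound on $\Nt v$, we have $|v|\lesssim\|g\|_{L^{p'}}$ there. Using Poincar\'e with zero boundary values, $u$ is bounded by its gradient, and the time-derivative term is handled the same way via Caccioppoli. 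Altogether the error terms are bounded by $\fiint_{J_{K_0}\cap\om}\abs{\nabla u}$.

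\emph{Main obstacle.} The delicate point is the time-varying boundary. The cutoff region and the Poincar\'e/Caccioppoli arguments must respect the graph. One option is to flatten the boundary with the Lipschitz map, which keeps ellipticity only in a weak sense. We expect to avoid this by working directly in $\om$, using a boundary Poincar\'e inequality valid for functions vanishing on $\pom$ in $\Lip(1,\tfrac12)$ domains. The constant $K_0(n,\ell)$ is fixed at this stage, to ensure the enlarged cylinders stay graph-like. The remaining concern is justifying $|v|\lesssim\|g\|$ away from $\supp\vec h$. For this we would use the comparison principle and boundary Harnack inequality, which hold on such domains. Finally, taking the supremum over $g$ gives \eqref{E.locest}.
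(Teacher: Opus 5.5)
Your outline is essentially the paper's proof: duality for $\Nt$ against the Carleson functional, the finite-energy adjoint solution $v$ of $\LL^*v=\divg\vec h$ controlled through $(D_{\LL^*})_{p'}\Rightarrow(PD_{p'})^{\LL^*}$, cross-testing $u$ and $v$ against a one-sided cutoff, and bounding the three annular error terms by local boundedness and Poincar\'e for $u$ and by a sup bound plus Caccioppoli for $v$. Splitting off the high part of the cones is harmless but unnecessary, the square-function bound for $v$ is never used, and a few details need fixing. First, the step you flag is done in the paper not by comparison or boundary Harnack, which do not apply to the signed $v$, but by Moser local boundedness up to the boundary portion, then the cone-volume estimate \eqref{E.conebd}, then H\"older with $\|\Nt(v)\|_{L^{p'}}\lesssim1$. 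Second, the linearisation must pair against $L^2$-normalised fields on Whitney balls, as in \eqref{E.carlduality}, truncated so that the source lies in $L^\infty_c$. Third, the upper time-face term vanishes because $v(\cdot,\tau)=0$, not because of the lateral boundary data.
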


\noindent The Neumann theorem of \cite{DLP2} was stated with an $L^2$ average
on the right-hand side. We prefer to state it using the $L^1$ average, which is what we actually need in Section
\ref{S.extrapolation}.\medskip

The hypothesis of Theorem \ref{T.locest} is solvability for the adjoint
operator. The Neumann counterpart has already been used to interpolate
Neumann solvability \cite{DL,DLP2} and to prove solvability of the Neumann
problem for operators whose coefficients satisfy a Carleson condition
\cite{DLPN}. We expect Theorem \ref{T.locest} to be useful outside the present
paper as well.\medskip

To keep the paper shorter, we do not consider domains that are only
locally given by Lewis--Murray graphs (cf.\ \cite{DDH}), nor do we attempt to
give an intrinsic definition of $\dot L^p_{1,1/2}(\pom)$ which does not use
coordinates. Such a definition would be important and useful for extending the parabolic
Regularity problem to boundaries which are not graphs.\medskip

The paper is organised as follows. Section \ref{S.notation} fixes the
notation and defines the boundary value problems. Section \ref{S.quoted}
collects the parabolic estimates used in the proofs. Section \ref{S.main}
proves Theorem \ref{T.main}, and Section \ref{S.counter} gives the
counterexample of Theorem \ref{T.counter}. Section \ref{S.loc} proves the local
estimate, Theorem \ref{T.locest}. In Section \ref{S.extrapolation} we prove
Theorem \ref{T.dual}, using Sobolev extrapolation in one range of exponents and
finite-atomic Hardy--Sobolev interpolation in the other.\medskip

\noindent{\bf Statement about the use of generative AI:} Generative AI (Anthropic model Opus and OpenAI models gpt-5.6 and gpt-6.0)
were used to draft the \LaTeX version of this file as well during the proof reading process. The author is solely responsible for the mathematical ideas, content, and validity of the proofs presented here.

\section{Notation}\label{S.notation}

Points of $\R^n=\R^{n-1}\times\R$ are $z=(y,s)$.
Points of $\Uc=\R^n_+\times\R$ are $(X,t)=(x,x_n,t)$ with $x\in\R^{n-1}$ and
$x_n>0$, and $\pd\Uc$ is identified with $\R^{n-1}\times\R$. For every
$k\ge1$ we use the parabolic norm on $\R^k\times\R$ given by
\[
 \|(Y,s)\|:=\br{\abs Y^2+\abs s}^{1/2}.
\]
Thus the same notation $B_r$ denotes a boundary ball in
$\R^{n-1}\times\R$, of measure comparable to $r^{n+1}$, or an ambient ball
in $\R^n\times\R$, of measure comparable to $r^{n+2}$. The context specifies
which ball is meant. On $\Uc$ the parabolic distance to the boundary is
$x_n$.

For an exponent $1<r<\infty$ we write $r'=r/(r-1)$. The notation
$\mathbf1_E$ denotes the indicator of $E$, and
\[
 \fiint_E f\,\d\mu:=\frac1{\mu(E)}\int_E f\,\d\mu.
\]
We write $A\lesssim B$ when $A\le CB$ and $A\approx B$ when both inequalities
hold. Dependencies of $C$ are stated when they are not clear from the
hypotheses. On an open subset of $\R^n\times\R$, an unadorned $\nabla$ or
$\divg$ always denotes the spatial operators $\nabla_X$ or $\divg_X$.

$\HT$ is the Hilbert transform in the time variable with symbol
$i\,\operatorname{sgn}\tau$, and $\dhalf$ is the operator with symbol
$\abs\tau^{1/2}$. Thus
\begin{equation}\label{E.algebra}
  \pd_t=\dhalf\HT\dhalf,\qquad \dhalf\dhalf=-\HT\pd_t .
\end{equation}
$\Dpar$ is the operator with symbol $\|(\xi,\tau)\|$, and
$\dot L^p_{1,1/2}(\R^n):=\set{f:\ \|\Dpar f\|_{L^p}<\infty}$, modulo constants.
For $1<p<\infty$,
\begin{equation}\label{E.norms}
  \|\Dpar f\|_{L^p}\ \approx\ \|\nabla_yf\|_{L^p}+\big\|\dhalf f\big\|_{L^p} ,
\end{equation}
by \cite[(2.10)]{DN}. Here $\BMOpar$ is bounded mean oscillation over
parabolic balls. We use $\BMO$ without a subscript only for the usual
one-dimensional space on $\R$.

A measure $\d\mu=m(z,\alpha)\d z\d\alpha\ge0$ on
$\R^n\times(0,\infty)$, or on $\Uc$ with $\alpha=x_n$, is \emph{Carleson}
when
\[
 \Cnorm\mu:=\sup_{z_0\in\R^n,\,r>0}\frac1{\abs{B_r}}
 \int_{B_r(z_0)}\int_0^r m(z,\alpha)\,\d\alpha\d z<\infty.
\]

Let $\om$ be either the graph domain \eqref{E.dom} or $\Uc$, and let
$$\delta(X,t):=\inf_{(q,\tau)\in\pom}\big\|(X,t)-(q,\tau)\big\|$$
be the parabolic distance to the boundary, which on $\Uc$ is $x_n$, and let
\begin{equation}\label{E.surfball}
  \Delta_r(P,\tau):=\pom\cap B_r(P,\tau)
\end{equation}
be the surface ball of radius $r$ at a boundary point. The measure on $\pom$ is
defined by slicing in time,
\begin{equation}\label{E.sigma}
  \sigma(E):=\int_\R\mathcal H^{n-1}\br{E\cap\pom_t}\d t
\end{equation}
for a Borel set $E\subset\pom$. Here
$\om_t:=\set{X:(X,t)\in\om}$ and $\pom_t:=\pd\om_t$, and
$\mathcal H^{n-1}$ is the Hausdorff measure on the Lipschitz boundary
$\pom_t$ of the time slice. We define $\sigma$ this way
rather than as a Hausdorff measure on $\pom$ because $\pom$ is only H\"older of
order $\tfrac12$ in time. Hence its Euclidean Hausdorff dimension may exceed
$n$, and the restriction of $\mathcal H^n$ to $\pom$ need not be locally
finite.\medskip

\noindent For $a>0$ and $(q,\tau)\in\pom$ the nontangential cone is
\begin{equation}\label{E.cone}
  \Gam_a(q,\tau):=\set{(X,t)\in\om:\ \big\|(X,t)-(q,\tau)\big\|
   <(1+a)\delta(X,t)} .
\end{equation}
\noindent For a scalar- or vector-valued function $h$ we put
\begin{equation}\label{E.functionals}
  N_a(h)(q,\tau):=\sup_{\Gam_a(q,\tau)}\abs h,\qquad
  S_a(h)(q,\tau):=\Big(\iint_{\Gam_a(q,\tau)}\abs{\nabla_Xh}^2\delta^{-n}\d X\d t
   \Big)^{1/2},
\end{equation}
\begin{equation}\label{E.Ntilde}
  \Nt_a(h)(q,\tau):=\sup_{(X,t)\in\Gam_a(q,\tau)}
   \Big(\fiint_{B_{\delta(X,t)/2}(X,t)}\abs h^2\Big)^{1/2} .
\end{equation}
The absolute values in these formulas are Euclidean norms, and the spatial
gradient is taken componentwise. We also use the area function
\begin{equation}\label{E.areafun}
 \mathfrak A_a(h)(q,\tau):=
 \Big(\iint_{\Gam_a(q,\tau)}\abs{\pd_th}^2
       \delta^{-n+2}\d X\d t\Big)^{1/2}.
\end{equation}
Thus $S_a(\nabla_Xh)^2$ is the conical integral of
$\abs{\nabla^2_Xh}^2\delta^{-n}$. The $L^p$ norms of $N_a$, $\Nt_a$, $S_a$
and $\mathfrak A_a$ do not depend on the aperture up to constants. We fix one
aperture and suppress $a$, and use subscripts such as $a'$ when the aperture
is enlarged. In particular, $\Nt_a(h)\le N_{a'}(h)$ for a slightly larger
aperture $a'$. The cone and these functionals are attached to the set $\om$
and involve no choice of coordinates.

\noindent The remaining functional is the Carleson one,
\begin{equation}\label{E.carlfun}
  \wt{\mathcal C}(w)(q,\tau):=\sup_{r>0}\frac1{\sigma\br{\Delta_r(q,\tau)}}
  \iint_{B_r(q,\tau)\cap\om}
   \Big(\fiint_{B_{\delta(X,t)/2}(X,t)}\abs w^2\Big)^{1/2}
   \frac{\d X\d t}{\delta(X,t)} ,
\end{equation}
where the inner average is the one in \eqref{E.Ntilde}. This functional is
dual to $\Nt$: for
$1<p<\infty$,
\begin{equation}\label{E.carlduality}
  \big\|\Nt(u)\big\|_{L^p(\pom)}\ \approx\
  \sup\Big\{\iint_\om uF\,\d X\d t:\ \big\|\wt{\mathcal C}(\delta F)
   \big\|_{L^{p'}(\pom)}\le1\Big\} ,
\end{equation}
which is the Carleson duality of Hyt\"onen and Ros\'en \cite{HR} in the form of
\cite[Lemma 2.8]{DLP2}. The normalisation there is by the $(n+1)$-dimensional
parabolic Hausdorff measure of $\pom$, which is comparable to $\sigma$.

\subsection{The graph coordinates}\label{ss.graphgeometry}

Let $\phi$ satisfy \eqref{E.lip}, let $\ell_0$ be as in \eqref{E.ell0}, and
let $\nu$ be admissible. Put
\begin{equation}\label{E.transversality}
 \theta_\nu:=\angle(\nu,e_n),\qquad
 d_\nu:=\cos\theta_\nu-\ell_0\sin\theta_\nu>0.
\end{equation}
No Lewis--Murray assumption is needed in the following lemma.

\begin{lemma}\label{L.graphgeometry}
The rotated graph function satisfies
\begin{equation}\label{E.graphbound}
 |\phi_\nu(y,t)-\phi_\nu(z,s)|
 \le \frac{\sqrt{1+\ell_0^2}}{d_\nu}|y-z|
      +\frac{\ell}{d_\nu}|t-s|^{1/2}.
\end{equation}
The boundary transition has the form
$\Lambda:=\pi^{-1}\circ\pi_\nu$, $\Lambda(y,t)=(\lambda_t(y),t)$.
Both $\Lambda$ and $\Lambda^{-1}$ are Lipschitz for the parabolic distance,
with constants depending only on $\ell$ and $d_\nu^{-1}$. For each fixed $t$,
both $\lambda_t$ and $\lambda_t^{-1}$ are Lipschitz, with constants depending
only on $\ell_0$ and $d_\nu^{-1}$, and almost everywhere
\begin{equation}\label{E.graphjac}
 d_\nu\le |\det D\lambda_t^{-1}|\le1+\ell_0,\qquad
 (1+\ell_0)^{-1}\le |\det D\lambda_t|\le d_\nu^{-1}.
\end{equation}
\end{lemma}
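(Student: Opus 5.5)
The plan is to work slice by slice in time, using only the fact that each slice $\pom_t$ is the graph of the $\ell_0$-Lipschitz function $\phi(\cdot,t)$, and to handle time increments through the $\tfrac12$-Hölder bound in \eqref{E.lip}. Fix a spatial rotation $R$ of $\R^n$ with $Re_n=\nu$ and write $\nu=(\sin\theta_\nu\,\omega,\cos\theta_\nu)$ with $\omega\in\mathbb S^{n-2}$. Then $\pi_\nu(z,t)=(R(z,\phi_\nu(z,t)),t)$, where $\phi_\nu(z,t)$ is defined as the unique $c$ for which $R(z,c)\in\pom_t$. Uniqueness is the admissibility statement. Existence follows from the computation below, which shows that the function $g$ defined there changes sign.

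\emph{Step 1 (graph bound \eqref{E.graphbound}).} I would use a single ``move along $\nu$'' estimate. For $X_1\in\pom_t$ with $X_1=(y,\phi(y,t))$, set
\[
g(c):=(X_1+c\nu)_n-\phi\big((X_1+c\nu)',s\big).
\]
Using \eqref{E.lip} together with $\|\nabla_y\phi\|_\infty=\ell_0$, this gives
\[
g(c)\ \ge\ c\cos\theta_\nu-\ell_0 c\sin\theta_\nu-\ell|t-s|^{1/2}=c\,d_\nu-\ell|t-s|^{1/2}\quad(c>0),
\]
and the symmetric bound from above for $c<0$. Hence the $\nu$-line through $X_1$ meets $\pom_s$ at a parameter $|c|\le \ell|t-s|^{1/2}/d_\nu$. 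This yields the time part with constant exactly $\ell/d_\nu$.

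For the spatial part at a fixed time, take $X_1,X_2\in\pom_t$ and $V=X_1-X_2$, so that $|v_n|\le\ell_0|v'|$. The angle between $V$ and the hyperplane $e_n^\perp$ is at most $\arctan\ell_0$. Therefore the angle between $V$ and $\nu$ is at least $\tfrac\pi2-\arctan\ell_0-\theta_\nu$, and
\[
|P_{\nu^\perp}V|\ \ge\ |V|\cos(\arctan\ell_0+\theta_\nu)=|V|\,\frac{d_\nu}{\sqrt{1+\ell_0^2}} .
\]
Since $|z_1-z_2|=|P_{\nu^\perp}V|$ and $|\phi_\nu(z_1,t)-\phi_\nu(z_2,t)|\le|V|$, this gives the first term of \eqref{E.graphbound}. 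Combining the two cases via the intermediate point $(z,s)$ then gives \eqref{E.graphbound}.

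\emph{Step 2 (transition map).} Since $R$ is spatial, $\Lambda=\pi^{-1}\circ\pi_\nu$ preserves $t$. Its components are
\[
\lambda_t(z)=\text{first $n-1$ coordinates of }R(z,\phi_\nu(z,t)),\qquad
\lambda_t^{-1}(y)=P_{\nu^\perp}R^{-1}(y,\phi(y,t)),
\]
where $\nu^\perp$ is identified with $\R^{n-1}$. Each is a composition of an isometry, a projection, and a graph map with the Lipschitz bounds of \eqref{E.lip} or \eqref{E.graphbound}. Hence $\lambda_t$ and $\lambda_t^{-1}$ are Lipschitz with constants depending on $\ell_0$ and $d_\nu^{-1}$. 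Adding the time increments gives parabolic Lipschitz bounds for $\Lambda$ and $\Lambda^{-1}$ with constants depending on $\ell$ and $d_\nu^{-1}$.

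\emph{Step 3 (Jacobians \eqref{E.graphjac}).} At a point where $\phi(\cdot,t)$ is differentiable, which holds a.e. by Rademacher, $D\lambda_t^{-1}(y)$ is the map $w\mapsto P_{\nu^\perp}(w,\nabla_y\phi\cdot w)$. The $(n-1)$-volume of the projection onto $\nu^\perp$ of the graph parallelotope equals the volume of the parallelotope times $|\langle N,\nu\rangle|$, where $N$ is the unit normal. This gives
\[
\det D\lambda_t^{-1}=\langle(-\nabla_y\phi,1),\nu\rangle=\cos\theta_\nu-\sin\theta_\nu\,\omega\cdot\nabla_y\phi .
\]
Its absolute value lies between $\cos\theta_\nu-\ell_0\sin\theta_\nu=d_\nu$ and $\cos\theta_\nu+\ell_0\sin\theta_\nu\le1+\ell_0$. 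The bounds for $\det D\lambda_t$ are the reciprocals, by the chain rule for bi-Lipschitz maps a.e.

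The only delicate point I expect is this determinant identity for the projected graph. I would verify it directly by a block computation: choose coordinates with $\omega=e_1$ and compute the $(n-1)\times(n-1)$ determinant, which reduces to a rank-one perturbation of a diagonal matrix.
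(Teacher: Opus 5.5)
Your proof is correct. Steps 2 and 3 are the paper's argument: the same explicit formulas for $\lambda_t$ and $\lambda_t^{-1}$, the same a.e. identity $|\det D\lambda_t^{-1}|=\nu_n-\nu'\cdot\nabla_y\phi$, and the same inverse-Jacobian step.

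The genuine difference is in how you prove \eqref{E.graphbound}.

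\emph{The paper's route.} It uses a single function $h_{y,t}(r)=q_n(y)+\nu_nr-\phi(q'(y)+r\nu',t)$. This function satisfies $h_{y,t}(r)-h_{y,t}(r')\ge d_\nu(r-r')$ and the joint perturbation bound $|h_{y,t}(r)-h_{z,s}(r)|\le\sqrt{1+\ell_0^2}\,|y-z|+\ell|t-s|^{1/2}$. Comparing the two zeros gives both terms of \eqref{E.graphbound} at once. The strict monotonicity also gives existence and uniqueness of $\phi_\nu$ directly, without appealing to admissibility.

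\emph{Your route.} Your time estimate is essentially this argument with $y=z$. Your spatial estimate is a separate geometric argument. Chords $V$ of $\pom_t$ satisfy $|v_n|\le\ell_0|v'|$, so $\angle(V,\nu)$ stays within $\arctan\ell_0+\theta_\nu<\pi/2$ of a right angle. The identity $\cos(\arctan\ell_0+\theta_\nu)=d_\nu/\sqrt{1+\ell_0^2}$ then reproduces exactly the paper's constant. For the two-sided angle bound, apply your lower bound to both $V$ and $-V$.

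\emph{What each buys.} Your version shows geometrically why the admissibility threshold is $\theta_\nu<\pi/2-\arctan\ell_0$. If you bound the $\nu$-component by $|V|\sin(\arctan\ell_0+\theta_\nu)$ instead of by $|V|$, it even gives the slightly sharper spatial constant $(\ell_0\cos\theta_\nu+\sin\theta_\nu)/d_\nu$. The paper's version is shorter and handles existence, uniqueness and both parts of the bound in one stroke.

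\emph{Two small points.} First, your $g$ is anchored at a point of $\pom_t$. To get existence of $\phi_\nu(z,t)$ for an arbitrary $z$, run the same slope-$d_\nu$ estimate on the $\nu$-line through $R(z,0)$; this is precisely the paper's $h_{y,t}$. Second, $\lambda_t^{-1}(y)$ should be read as the first $n-1$ coordinates of $R^{-1}(y,\phi(y,t))$. This is a harmless notational slip.
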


\begin{proof}
For $n=1$ the maps are the identity and the assertions are immediate.
Assume $n\ge2$. Write $\nu=(\nu',\nu_n)$ and let
$Q:\R^{n-1}\to\nu^\perp$ be a linear isometry. Write
$Qy=(q'(y),q_n(y))$. For fixed $(y,t)$ consider
$$h_{y,t}(r):=q_n(y)+\nu_n r-\phi(q'(y)+r\nu',t).$$
Since $\nu_n=\cos\theta_\nu$ and $|\nu'|=\sin\theta_\nu$, for $r>r'$ we have
$$h_{y,t}(r)-h_{y,t}(r')\ge d_\nu(r-r').$$
Thus $h_{y,t}$ is strictly increasing and maps $\R$ onto $\R$. Its unique
zero is $\phi_\nu(y,t)$, and $h_{y,t}(r)>0$ describes $\om$ in these
coordinates. Moreover
\[
 |h_{y,t}(r)-h_{z,s}(r)|
 \le \sqrt{1+\ell_0^2}|y-z|+\ell|t-s|^{1/2}.
\]
Comparing the two zeros gives \eqref{E.graphbound}.\medskip

\noindent The transition and its inverse are
\[
 \lambda_t(y)=q'(y)+\nu'\phi_\nu(y,t),\qquad
 \lambda_t^{-1}(x)=Q^{\mathsf T}(x,\phi(x,t)).
\]
The first formula and \eqref{E.graphbound} give the asserted bounds for
$\lambda_t$ and $\Lambda$. The second gives
\[
 |\lambda_t^{-1}(x)-\lambda_s^{-1}(z)|
 \le \sqrt{1+\ell_0^2}|x-z|+\ell|t-s|^{1/2},
\]
and hence the bounds for their inverses. Finally, differentiating the
second formula gives
$$|\det D\lambda_t^{-1}(x)|
   =\nu_n-\nu'\cdot\nabla_x\phi(x,t).$$
The expression on the right lies between $d_\nu$ and $1+\ell_0$.
The inverse Jacobian formula proves \eqref{E.graphjac}.
\end{proof}

\subsection{The boundary spaces}\label{ss.bdyspaces}

In this section $\om$ is the graph domain \eqref{E.dom}, whose graph function
$\phi$ is $\Lip(1,\tfrac12)$ with constant $\ell$ and has
$\dhalf\phi\in\BMOpar$ with $\|\dhalf\phi\|_{\BMOpar}\le\eta$. We call
$(\ell,\eta)$ the \emph{character} of $\om$.

\begin{definition}[\bf the Regularity boundary space]\label{D.bdyspace}
Let $1<p<\infty$. By \eqref{E.main} of Theorem \ref{T.main}, the norms
$\|F\|_{p,\nu}$ of \eqref{E.datum} and \eqref{E.datumnu} are comparable to
one another for all admissible $\nu$. We may
therefore put
\begin{equation}\label{E.bdyspace}
  \dot L^p_{1,1/2}(\pom):=\set{F:\pom\to\R:\ \|F\|_{p,e_n}<\infty},\qquad
  \|F\|_{\dot L^p_{1,1/2}(\pom)}:=\|F\|_{p,e_n} ,
\end{equation}
and both the space and its norm, the latter up to bounded constants, are
attached to the set $\pom$ and not to a coordinate system.
\end{definition}

\noindent For the Dirichlet problem the boundary space is
$L^p(\pom,\d\sigma)$, with $\sigma$ as in \eqref{E.sigma}.\medskip

\subsection{The operator and the parabolic measure}\label{ss.pm}

The operator is
\begin{equation}\label{E.pde}
  \LL u:=-\pd_tu+\divg\br{A\nabla u},\qquad
  \LL^*u:=\pd_tu+\divg\br{A^{\mathsf T}\nabla u},
\end{equation}
with $A:\om\to M_{n\times n}(\R)$ bounded, measurable and uniformly elliptic,
\begin{equation}\label{E.elliptic}
  \lambda\abs\xi^2\le\sum_{i,j}a_{ij}(X,t)\xi_i\xi_j,\qquad
  \abs{A(X,t)}\le\Lambda ,
\end{equation}
for almost every $(X,t)\in\om$ and all $\xi\in\R^n$. No further hypothesis is
placed on $A$ and the matrix need not be symmetric. Write $C_c(\pom)$ for the
continuous functions on $\pom$ of compact support.

\begin{definition}[\bf the parabolic measure]\label{D.pm}
A family $\set{\omega^{(X,t)}}_{(X,t)\in\om}$ of Borel probability measures on
$\pom$, each supported in $\set{(q,\tau)\in\pom:\tau\le t}$, is the \emph{parabolic
measure} of $\LL$ on $\om$ if for every $f\in C_c(\pom)$ the function
\begin{equation}\label{E.pmsol}
  u(X,t)\ :=\ \int_{\pom}f(q,\tau)\d\omega^{(X,t)}(q,\tau)
\end{equation}
is a bounded weak solution of $\LL u=0$ in $\om$ which extends continuously to
$\overline\om$ with the value $f$ on $\pom$, and satisfies $\abs u\to0$
uniformly as $t\to-\infty$.
\end{definition}

\noindent Theorem \ref{T.pmeasure} below says that such a family exists and is
unique, and that the same holds for $\LL^*$ with the direction of time
reversed. We write $\omega^{*,(X,t)}$ for its measure. We call \eqref{E.pmsol}
the \emph{solution with datum $f$}.

\subsection{The Dirichlet and the Regularity problem}\label{ss.bvp}

Let $\Lipc(\pom)$ be the functions on $\pom$ of compact support that are
Lipschitz for the parabolic distance of $\pom$. For $1<p<\infty$ put
\begin{equation}\label{E.dense}
  \mathcal R_p(\pom):=\set{f\in\Lipc(\pom):\
   \|f\|_{\dot L^p_{1,1/2}(\pom)}<\infty} .
\end{equation}
The finiteness condition is necessary: a function in $\Lipc(\pom)$ is only
H\"older of order $\tfrac12$ in time, as dictated by the parabolic distance,
and its half-time derivative need not lie in $L^p$. The function $b$ of
Proposition \ref{P.counter} is an example.
Since $\Lipc(\pom)\subset C_c(\pom)$, the solution \eqref{E.pmsol} is defined
for every datum of \eqref{E.dense}.

\begin{definition}[\bf the Dirichlet problem]\label{D.Dp}
Let $1<p<\infty$. The $L^p$ Dirichlet problem for $\LL$ on $\om$ is
\emph{solvable}, written $(D)_p$, or $(D_\LL)_p$ when the operator is shown, if there is $C>0$ such that for every $f\in C_c(\pom)$ the solution
$u$ with datum $f$ satisfies
\begin{equation}\label{E.Dp}
  \big\|N(u)\big\|_{L^p(\pom,\d\sigma)}\ \le\
  C\,\|f\|_{L^p(\pom,\d\sigma)} .
\end{equation}
\end{definition}

\begin{definition}[\bf the Regularity problem]\label{D.Rp}
Let $1<p<\infty$. The $L^p$ Regularity problem for $\LL$ on $\om$ is
\emph{solvable}, written $(R)_p$, or $(R_\LL)_p$, if there is $C>0$ such that
for every $f\in\mathcal R_p(\pom)$ the solution $u$ with datum $f$ satisfies
\begin{equation}\label{E.Rp}
  \big\|\Nt(\nabla u)\big\|_{L^p(\pom,\d\sigma)}\ \le\
  C\,\|f\|_{\dot L^p_{1,1/2}(\pom)} .
\end{equation}
\end{definition}

\noindent In both definitions $C$ may only depend on $\LL$, on $p$ and on the
character $(\ell,\eta)$ of $\om$. The same definitions are
applied to $\LL^*$, with $\omega^{*,(X,t)}$ in place of $\omega^{(X,t)}$, and we
then write $(D_{\LL^*})_p$ and $(R_{\LL^*})_p$.\medskip

\noindent The class $C_c(\pom)$ is dense in $L^p(\pom,\d\sigma)$, and
$\mathcal R_p(\pom)$ is dense in $\dot L^p_{1,1/2}(\pom)$. Hence in either case
the solution operator extends to the whole boundary space with the estimate
still holding, and every datum has a solution attaining it nontangentially
almost everywhere.

\noindent For the second density, \eqref{E.lip} makes $\pi$ bi-Lipschitz
for the parabolic distances, and \eqref{E.datum} identifies the boundary
norm with the flat norm. Every compactly supported smooth function on the
flat space is parabolically Lipschitz, by the mean value theorem at distances
at most one and boundedness at larger distances. Such functions are dense
in $\dot L^p_{1,1/2}=\dot F^1_{p,2}$; see \cite[\S2.3]{DN} for this
Triebel--Lizorkin identification. The approximation is modulo constants.
Adding a constant to the datum adds the same constant to its
parabolic-measure solution, so neither side of \eqref{E.Rp} changes.

\noindent We record one consequence, because it is the form in which the
above is used in Section \ref{S.extrapolation}. To verify $(R_\LL)_p$ it is
enough to verify \eqref{E.Rp} for the data $g\circ\pi^{-1}$ with
$g\in C^\infty_c(\R^{n-1}\times\R)$.

\noindent Indeed, let $F\in\mathcal R_p(\pom)$ and put $f=F\circ\pi$, a
compactly supported function which is Lipschitz for the parabolic distance and
has $\|f\|_{\dot L^p_{1,1/2}}<\infty$. Let $f_k$ be the anisotropic
mollifications of $f$ at scales tending to zero. Then $f_k\in
C^\infty_c(\R^{n-1}\times\R)$, the convolution commutes with $\Dpar$ and
contracts $L^p$, so $\|f_k\|_{\dot L^p_{1,1/2}}\le\|f\|_{\dot
L^p_{1,1/2}}$, and $f_k\to f$ uniformly because $f$ is uniformly continuous.
By \eqref{E.maxpr} the solutions converge uniformly on $\overline\om$, hence
locally uniformly in $\om$, and the Caccioppoli inequality then gives
$\nabla u_{f_k\circ\pi^{-1}}\to\nabla u_F$ in $L^2_{\loc}(\om)$. Fatou's
lemma applied first to the Whitney averages of \eqref{E.Ntilde} and then on
$\pom$ gives
$$\big\|\Nt(\nabla u_F)\big\|_{L^p(\pom)}\ \le\
  \liminf_k\big\|\Nt(\nabla u_{f_k\circ\pi^{-1}})\big\|_{L^p(\pom)}
  \ \le\ C\|F\|_{\dot L^p_{1,1/2}(\pom)} ,$$
which is \eqref{E.Rp}.

\section{Quoted parabolic results}\label{S.quoted}

\subsection{The Hofmann--Lewis mollification}\label{ss.moll}

Following \cite[(1.6)]{HLmem} we fix $\vartheta\in C^\infty_0(B_1)$, even and
non-negative, with $\int\vartheta=1$, and write
$$P_\lambda g:=\vartheta_\lambda*g,\qquad
  \vartheta_\lambda(y,s):=\lambda^{-(n+1)}\vartheta\br{y/\lambda,s/\lambda^2},$$
the convolution taken on $\R^{n-1}\times\R$. All the properties of $P_\lambda$
that we use are contained in the following lemma of Hofmann and Lewis. In it
$k$ and $\theta$ are non-negative integers, $\varphi$ is a multi-index in
the $n-1$ spatial variables, and
$$l:=k+\abs\varphi+\theta,\qquad
  \pd^{\,l}:=\frac{\pd^{\,l}}{\pd x_n^k\,\pd x^\varphi\,\pd t^\theta} ,$$
the derivatives being taken of $P_{\gamma x_n}a$ as a function of the point
$(X,t)=(x,x_n,t)$ of $\Uc$, so that $\pd/\pd x_n$ differentiates through the
scale as well.

\begin{lemma}[{\cite[Chapter I, Lemma A]{HLmem}}]\label{L.HL}
Let $a:\R^{n-1}\times\R\to\R$ satisfy
$$\abs{a(y,t)-a(u,t)}\le\ell_0\abs{y-u},\qquad
  \big\|\dhalf a\big\|_{\BMOpar}\le\ell_1 ,$$
with $\ell_0,\ell_1<\infty$. If $l\ge1$, then
\begin{equation}\label{E.mollbds}
  \big|\pd^{\,l}P_{\gamma x_n}a\big|\ \le\ C\,x_n^{1-l-\theta}
\end{equation}
at every $(X,t)\in\Uc$, and if in addition $k+\theta\ge1$ or
$\abs\varphi\ge2$, then the measure with density
\begin{equation}\label{E.mollcarl}
  \big|\pd^{\,l}P_{\gamma x_n}a\big|^2\,x_n^{2l+2\theta-3}
\end{equation}
is Carleson on $\Uc$. Both $C$ and the Carleson norm depend only on $n$, $l$,
$\gamma$, $\ell_0$ and $\ell_1$.
\end{lemma}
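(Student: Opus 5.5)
The plan is to reduce everything to two facts about $a$: that $\nabla_y a\in L^\infty$, and that $\dhalf a\in\BMOpar$. Write $\lambda=\gamma x_n$. Every derivative $\pd^{\,l}P_{\lambda}a$ can be written as a convolution of $a$ with a kernel $\lambda^{-(k+|\varphi|+2\theta)}\Psi_\lambda$, where $\Psi$ is smooth, supported in $B_1$, and has integral zero as soon as $l\ge1$. For $\pd_{x_n}$ this holds because $\int\vartheta_\lambda=1$ for every $\lambda$, so $\int\pd_\lambda\vartheta_\lambda=0$. The point is to move one derivative off the kernel and onto $a$, in the form of $\nabla_y a$ or $\dhalf a$, so that the available norms of $a$ are used.

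\textbf{Pointwise bound \eqref{E.mollbds}.} I would split into two cases.

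If $|\varphi|\ge1$, I write $\pd_{x_i}P_\lambda a=P_\lambda(\pd_{y_i}a)$. This is bounded by $\ell_0$, and each further derivative costs $\lambda^{-1}$ (spatial or $x_n$) or $\lambda^{-2}$ (time). This gives $x_n^{1-l-\theta}$.

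If $|\varphi|=0$ and $\theta\ge1$, I use \eqref{E.algebra}, namely $\pd_t=\dhalf\HT\dhalf$. Then $\pd_tP_\lambda a=\lambda^{-1}\,\Psi^{(1)}_\lambda*(\dhalf a)$ with $\Psi^{(1)}=\dhalf\HT\vartheta$. This kernel is not compactly supported, but it has integral zero and decays like $\|(y,s)\|^{-(n+1)-1}$. Pairing it with a $\BMOpar$ function gives a bound $\lesssim\lambda^{-1}\ell_1$. The standard argument subtracts the average of $\dhalf a$ on $B_\lambda$ and sums over dyadic annuli, where the averages grow only logarithmically. Remaining derivatives again cost the appropriate powers of $\lambda$.

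If only $x_n$-derivatives occur, I first decompose the mean-zero kernel as
$$\pd_\lambda\vartheta_\lambda=\sum_i\pd_{y_i}\big(\psi^i_\lambda\big)+\lambda\,\pd_s\big(\chi_\lambda\big),$$
with $\psi^i,\chi$ smooth and compactly supported. This is possible because $\lambda\pd_\lambda\vartheta_\lambda$ is the dilation generator $-(y\cdot\nabla_y+2s\pd_s+(n+1))\vartheta$ rescaled, which is exactly a divergence. This reduces the $x_n$ case to the two previous cases.

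\textbf{Carleson bound \eqref{E.mollcarl}.} The same reductions express the relevant derivative in one of two forms: $x_n^{-(l-1)-\theta}\,Q_\lambda b$ with $b=\nabla_y a\in L^\infty\subset\BMOpar$, or $x_n^{1-l-\theta}\,Q_\lambda b$ with $b=\dhalf a\in\BMOpar$. In both, $Q_\lambda$ is a convolution with a mean-zero kernel having the decay above.
\begin{itemize}
\item When $|\varphi|\ge2$, one spatial derivative goes onto $a$ and a second spatial derivative remains on the kernel, giving the mean zero.
\item When $\theta\ge1$, the factorisation through $\dhalf a$ already produces a mean-zero kernel.
\item When $k\ge1$, the divergence decomposition reduces to the previous cases.
\end{itemize}
Checking exponents, $|\pd^{\,l}P_\lambda a|^2x_n^{2l+2\theta-3}\lesssim|Q_\lambda b|^2x_n^{-1}$. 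So the claim reduces to the parabolic Fefferman--Stein fact that $|Q_\lambda b|^2\,\d z\,\d\lambda/\lambda$ is a Carleson measure with norm $\lesssim\|b\|_{\BMOpar}^2$. I would prove this by the usual splitting $b=(b-b_{B})\mathbf 1_{2B}+(b-b_B)\mathbf 1_{(2B)^c}+b_B$. The first piece is handled by $L^2$ Plancherel, using $\int_0^\infty|\widehat\Psi(\lambda\xi,\lambda^2\tau)|^2\d\lambda/\lambda<\infty$, which follows from the mean zero and the decay. The second piece is handled by the kernel tails, and the constant $b_B$ is annihilated.

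\textbf{Main obstacle.} The delicate step is the time-derivative case. The kernel $\dhalf\HT\vartheta$ is nonlocal, with only $|s|^{-3/2}$ decay in time. I would need to verify carefully that this decay is enough both for the $\BMOpar$ pairing in the pointwise bound and for the off-diagonal tail in the Carleson estimate. I expect it to be enough, since $\int_{\|z\|>\lambda}\|z\|^{-(n+2)}\log(\|z\|/\lambda)\,\d z$ converges in the parabolic dimension $n+1$. I would check this first.
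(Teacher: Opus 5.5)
The paper does not prove this lemma. It quotes it from Hofmann--Lewis, and checks by hand only the bound $|\pd_\lambda P_\lambda a|\le C(n)\ell$: it applies the mean-zero kernel $\lambda\pd_\lambda\vartheta_\lambda$, which is supported in $B_\lambda$, to $a-a(z)$. That same trick gives all of \eqref{E.mollbds} at once. As you observe yourself, every $\pd^{\,l}\vartheta_{\gamma x_n}$ with $l\ge1$ is $x_n^{-(k+|\varphi|+2\theta)}$ times a mean-zero kernel supported in $B_{\gamma x_n}$. Moreover, $a$ is $\Lip(1,\tfrac12)$ with constant controlled by $\ell_0,\ell_1$. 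So no nonlocal kernels are needed for the pointwise bound. Your Littlewood--Paley strategy for \eqref{E.mollcarl} is the right one, but two steps fail as written.

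\emph{The decay of the time-derivative kernel.} Your claimed decay of $\Psi^{(1)}=\dhalf\HT\vartheta$ is false when $n\ge2$. Since $\dhalf\HT$ acts only in $t$, $\Psi^{(1)}$ vanishes for $|y|\ge1$. For $|y|<1$ it behaves like $c\,\mathrm{sgn}(s)|s|^{-3/2}\int\vartheta(y,s')\,\d s'$ as $|s|\to\infty$, which is far larger than $\|(y,s)\|^{-(n+2)}\approx|s|^{-(n+2)/2}$. The annulus argument you describe multiplies $\sup_{A_j}|\Psi^{(1)}|\approx2^{-3j}$ by $\int_{2^{j+1}B_1}|b-b_{B_1}|\lesssim j\,2^{j(n+1)}\|b\|_{\BMOpar}$. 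This produces terms $j\,2^{j(n-2)}$, which do not sum. The integral you planned to check presupposes the wrong decay.

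The conclusion survives because the kernel lives in a thin tube. Cover $\{|y|<1,\ |s|\approx4^j\}$ by about $4^j$ unit parabolic balls. On each of them $|b-b_{B_1}|$ has average $\lesssim(1+j)\|b\|_{\BMOpar}$, so the total is $\lesssim\sum_j j\,2^{-j}<\infty$. Alternatively, use John--Nirenberg with $\|\Psi^{(1)}\mathbf 1_{A_j}\|_{L^{q'}}$ for $q>n-1$. The same covering at scale $\lambda$ bounds the off-diagonal part of the Carleson estimate on a box of size $r$ by $(\lambda/r)(1+\log(r/\lambda))\|b\|_{\BMOpar}$. This is square integrable against $\d\lambda/\lambda$ on $(0,r)$. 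You should also say in what sense $\dhalf a$ is defined for $a$ of growth $|t|^{1/2}$, for instance modulo constants by duality. This is what makes $\pd_tP_\lambda a=\lambda^{-1}\Psi^{(1)}_\lambda*\dhalf a$ legitimate.

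\emph{The case $k\ge1$.} This case does not reduce to the previous ones. Your identity, with $\psi^i=-y_i\vartheta$ and $\chi=-2s\vartheta$, gives $\pd_\lambda\vartheta_\lambda*a=\sum_i\psi^i_\lambda*\pd_{y_i}a+\lambda\,\chi_\lambda*\pd_sa$. In the first sum no derivative remains on the kernel. The mean zero must therefore come from $\int y_i\vartheta=0$, that is, from the evenness of $\vartheta$. This hypothesis is essential. Take a mollifier with $\int y_1\vartheta\neq0$ and $a=y_1$, which satisfies both hypotheses. Then $\pd_{x_n}P_{\gamma x_n}a$ is a nonzero constant, and $|\pd_{x_n}P_{\gamma x_n}a|^2x_n^{-1}$ is not Carleson. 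Once you invoke evenness there, route the $\pd_s$ term through $\dhalf$ as you do, and use the tube estimate above, your argument goes through.
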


\noindent The two hypotheses are \cite[(1.3)]{HLmem} and \cite[(1.4)]{HLmem},
and together they imply that $a$ is $\Lip(1,\tfrac12)$, which is
\cite[(1.5)]{HLmem}. Our $\phi$ satisfies them with $\ell_0$ the spatial
Lipschitz constant \eqref{E.ell0} and $\ell_1=\|\dhalf\phi\|_{\BMOpar}$, and so
does $\phi_\nu$ by Corollary \ref{C.BHMN} below.\medskip

\noindent For the chain rule
$\pd_{x_n}P_{\gamma x_n}a
=\gamma(\pd_\lambda P_\lambda a)|_{\lambda=\gamma x_n}$,
we need the bound $|\pd_\lambda P_\lambda a|\le C(n)\ell$ for
$a\in\Lip(1,\tfrac12)$ with constant $\ell$, independent of $\gamma$.
This is immediate:
$\pd_\lambda P_\lambda a=\lambda^{-1}\br{\lambda\pd_\lambda\vartheta_\lambda}*a$,
and the kernel $\lambda\pd_\lambda\vartheta_\lambda$ has integral zero and is
supported in $B_\lambda$. Hence at a point $z$ it may be applied to $a-a(z)$
instead of to $a$, and $\abs{a-a(z)}\le\ell\lambda$ on $B_\lambda(z)$.

\subsection{Lewis--Murray is a property of the set $\om$}\label{ss.BHMN}

\begin{theorem}[{\cite[Theorem 1.2]{BHMN}}]\label{T.BHMN}
Assume $n\ge2$, let $\om$ be a $\Lip(1,\tfrac12)$ graph domain with graph
function $\phi$, and let $\sigma$ be parabolic surface measure on $\pd\om$.
The following are equivalent:
\begin{enumerate}
\item the caloric measures of $\pd_t-\Delta$ form a parabolic
  $A_\infty(\d\sigma)$ family in the scale-local sense of
  \cite[Definition 3.18]{BHMN};
\item $\dhalf\phi\in\BMOpar$;
\item $\pd\om$ is parabolically uniformly rectifiable.
\end{enumerate}
The implications are quantitative, with constants depending only on the
dimension, the $\Lip(1,\tfrac12)$ character and the quantitative constant of
the corresponding condition.
\end{theorem}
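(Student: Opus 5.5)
This result is quoted from \cite[Theorem 1.2]{BHMN} and is not reproved in this paper. If I had to prove it, I would establish the cycle of implications $(2)\Rightarrow(1)\Rightarrow(3)\Rightarrow(2)$ and track constants at each step so that the equivalence stays quantitative.

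\emph{Step $(2)\Rightarrow(1)$.} I would follow Lewis--Murray and Hofmann--Lewis and flatten $\om$ with the adapted map
\[
\rho(x,x_n,t)=\br{x,\ x_n+P_{\gamma x_n}\phi(x,t),\ t}\colon\Uc\to\om .
\]
For $\gamma$ small depending on $\ell$, this map is a bijection. Lemma~\ref{L.HL} gives the bounds on the derivatives of $P_{\gamma x_n}\phi$ and the Carleson control of the higher derivatives. Pulling $\pd_t-\Delta$ back through $\rho$ gives a divergence-form operator on $\Uc$. Its coefficients differ from a $t$-independent, or at least ``good'', part by a term whose oscillation is governed by a Carleson measure. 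The Lewis--Murray hypothesis enters exactly here: the drift term produced by $\pd_t\rho$ is of the type \eqref{E.mollcarl} with $\theta=1$. I would then invoke the Carleson perturbation theory for the Dirichlet problem to get $(D)_p$ for some $p<\infty$. From this the scale-local $A_\infty$ property of caloric measure follows in the standard way, via comparison principles and the parabolic Harnack and doubling estimates.

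\emph{Step $(1)\Rightarrow(3)$.} This is the main obstacle and the genuinely new part of \cite{BHMN}. From $A_\infty$ I would first derive Carleson measure estimates for bounded caloric functions, of the form
\[
|\nabla u|^2\delta\,\d X\d t \quad\text{Carleson},
\]
using good-$\lambda$ arguments together with $S\lesssim N$. I would then test these estimates against suitable solutions, such as coordinate functions adapted to the boundary together with the Green function, to extract a parabolic Carleson condition on the $\beta$-numbers of $\pom$. That condition is parabolic uniform rectifiability. I expect the hardest point to be the step from the Green function estimates to the flatness of $\pom$, because the time direction only has H\"older-$\tfrac12$ control. I would try a compactness or blow-up argument, in which limits of $\om$ along bad cubes are shown to be half-spaces whose boundaries are vertical in time.

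\emph{Step $(3)\Rightarrow(2)$.} For a graph this is essentially a Littlewood--Paley computation. The parabolic $\beta$-numbers of the graph of $\phi$ dominate the local deviation of $\phi$ from affine functions of $x$ alone. A Carleson condition on these deviations is equivalent to the Carleson property of $|\pd_t P_\lambda\phi|^2\lambda^{3}\,\d z\,\d\lambda$. By the usual square-function characterisation, that Carleson property is equivalent to $\dhalf\phi\in\BMOpar$. Each step only produces constants depending on $n$, the $\Lip(1,\tfrac12)$ constant and the relevant Carleson or $A_\infty$ constant, which gives the quantitative statement.
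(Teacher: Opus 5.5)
You correctly note that the paper gives no proof of this theorem. It is imported from \cite{BHMN}, and the paper only ever uses the equivalence (2)$\Leftrightarrow$(3), together with the rotation invariance of (3), in Corollary~\ref{C.BHMN}. Your outlines of the classical directions are reasonable. (2)$\Rightarrow$(1) is the Lewis--Murray theorem, which your route recovers by pulling back through \eqref{E.LM} and using Dirichlet theory for operators with Carleson-controlled coefficients and drift. (3)$\Rightarrow$(2) is the Dorronsoro-type characterisation of the Lewis--Murray condition on graphs, the spatial part being automatic because $\nabla_y\phi\in L^\infty$. Two corrections there. First, the relevant density is $|\pd_tP_\lambda\phi|^2\lambda$, as in \eqref{E.mollcarl} with $l=\theta=1$. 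Your weight $\lambda^3$ is not invariant under the dilations $\phi\mapsto s^{-1}\phi(s\,\cdot,s^2\,\cdot)$, which preserve both the $\Lip(1,\tfrac12)$ and the $\BMOpar$ conditions. Second, the computation is reversible: the graph of an affine function of $y$ is a plane containing the time direction, so the parabolic $\beta$-numbers are trivially dominated by the graph deviations. It therefore gives (2)$\Rightarrow$(3) directly, without passing through (1), and that is all the paper needs.

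As a proof of the full equivalence, however, your cycle has a genuine hole at (1)$\Rightarrow$(3). This is precisely the new content of \cite{BHMN}, and what you write there is a programme rather than an argument. Largeness of $\beta$ persists under limits of uniformly $\Lip(1,\tfrac12)$ graphs, so blow-ups along cubes with large $\beta$ are never flat, and the argument has to run the other way. You would need a scale-invariant quantity that $A_\infty$ controls in Carleson norm. You would also need a rigidity statement: if this quantity vanishes in a limit, then the limiting boundary is a plane containing the time direction. You supply neither. $A_\infty$ by itself cannot give the rigidity, since it is scale invariant, survives the limit, and holds for non-flat boundaries such as time-independent Lipschitz graphs with corners. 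Testing Carleson estimates against coordinate functions does not help either, because these are caloric on all of $\R^n\times\R$ and do not see $\pom$. Finally, even with such a quantity, compactness yields only an $\varepsilon$--$\delta$ statement, that is, a packing condition for the cubes on which $\beta>\varepsilon$. To reach the square-summed Carleson condition in (3), or $\dhalf\phi\in\BMOpar$, with constants of the stated dependence, you would additionally need either a quantitative bound of $\beta^2$ by that quantity or a parabolic weak-to-strong geometric lemma of David--Semmes type.
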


\begin{corollary}\label{C.BHMN}
Let $\phi$ be $\Lip(1,\tfrac12)$ with $\dhalf\phi\in\BMOpar$ and let $\nu$ be
admissible. Then $\dhalf\phi_\nu\in\BMOpar$, and
$\|\dhalf\phi_\nu\|_{\BMOpar}$ is bounded in terms of $n$, of $\ell$, of
$d_\nu^{-1}$ and of $\|\dhalf\phi\|_{\BMOpar}$.
\end{corollary}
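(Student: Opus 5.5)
Corollary \ref{C.BHMN} will follow from Theorem \ref{T.BHMN}, applied once in each coordinate system, with the observation that the three conditions there do not depend on coordinates. First I dispose of $n=1$. There the spatial variable is absent, the only admissible direction is $\nu=e_n$, and so $\phi_\nu=\phi$. Assume from now on that $n\ge2$.

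By Lemma \ref{L.graphgeometry}, estimate \eqref{E.graphbound}, the function $\phi_\nu$ is $\Lip(1,\tfrac12)$. Its constant depends only on $\ell$ and $d_\nu^{-1}$. Hence $\om$, written in the coordinates with vertical direction $\nu$, is a $\Lip(1,\tfrac12)$ graph domain with graph function $\phi_\nu$, and Theorem \ref{T.BHMN} applies in both coordinate systems. In the original coordinates, (2)$\Rightarrow$(3) shows that $\pom$ is parabolically uniformly rectifiable. The constant depends on $n$, $\ell$ and $\|\dhalf\phi\|_{\BMOpar}$.

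The key step is to check that condition (3), or equivalently (1), is a property of the set $\pom$ together with its surface measure, and not of the chosen graph parametrisation. The change of coordinates is a rigid motion of $\R^n\times\R$ that acts only on the spatial variables and fixes $t$. Such a map preserves the parabolic distance and parabolic balls. It also carries $\pom$ onto itself and preserves the time-sliced measure $\sigma$ of \eqref{E.sigma}, because Hausdorff measure on each slice $\pom_t$ is invariant under spatial rotations. It carries caloric measure of $\pd_t-\Delta$ to caloric measure, since the heat operator is rotation invariant. Parabolic uniform rectifiability in \cite{BHMN} is defined through parabolic balls, $\sigma$ and approximating objects such as Carleson-type square functions or bilateral approximations, all of which are intrinsic to $\pom$. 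Hence the quantitative constant of (3) is unchanged by the rotation. If one prefers to avoid inspecting the definition, the same argument works with condition (1) in place of (3).

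Finally I apply (3)$\Rightarrow$(2) of Theorem \ref{T.BHMN} in the $\nu$-coordinates. This gives $\dhalf\phi_\nu\in\BMOpar$, with a bound depending on $n$, on the $\Lip(1,\tfrac12)$ constant of $\phi_\nu$ (controlled by $\ell$ and $d_\nu^{-1}$) and on the uniform rectifiability constant (controlled by $n$, $\ell$ and $\|\dhalf\phi\|_{\BMOpar}$). The only delicate point is the invariance claim. In particular, the \emph{scale-local} $A_\infty$ notion, and the normalisation of surface measure in \cite{BHMN}, must be invariant up to constants. This holds because the Lipschitz bijection $\Lambda$ of Lemma \ref{L.graphgeometry}, with Jacobian bounds \eqref{E.graphjac}, relates the two pulled-back measures with densities bounded above and below.
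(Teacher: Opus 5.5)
Your proposal is correct and follows the paper's proof. You dispose of $n=1$ and use \eqref{E.graphbound} to see that $\phi_\nu$ is $\Lip(1,\tfrac12)$ with controlled constant. You then use Theorem \ref{T.BHMN} to pass from $\dhalf\phi\in\BMOpar$ to parabolic uniform rectifiability of $\pom$, note that this is invariant under spatial rotations, and apply Theorem \ref{T.BHMN} again in the $\nu$-coordinates. Your closing appeal to $\Lambda$ and \eqref{E.graphjac} is harmless but unnecessary: $\sigma$ is defined intrinsically by slicing in time and is itself invariant under spatial rotations.
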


\begin{proof}
If $n=1$, the only admissible direction is $e_1$ and there is nothing to
prove. Assume $n\ge2$. By Theorem \ref{T.BHMN}, the condition
$\dhalf\phi\in\BMOpar$ says quantitatively that $\pd\om$ is parabolically
uniformly rectifiable. The latter is a property of the set and its parabolic
surface measure and is invariant under spatial rotations. Apply Theorem
\ref{T.BHMN} once more after rotating $\nu$ to the vertical direction. This
gives $\dhalf\phi_\nu\in\BMOpar$, with the asserted quantitative dependence
because \eqref{E.graphbound} controls the $\Lip(1,\tfrac12)$ character of
$\phi_\nu$ in terms of $\ell$ and $d_\nu^{-1}$.
\end{proof}

\subsection{Existence of the parabolic measure}\label{ss.pmproof}

\begin{theorem}\label{T.pmeasure}
Let $\om$ be as in \eqref{E.dom} and let $\LL$ be as in \eqref{E.pde},
uniformly elliptic in the sense of \eqref{E.elliptic}. Then the parabolic
measure of Definition \ref{D.pm} exists, and the family
$\set{\omega^{(X,t)}}$ is unique. Moreover
\begin{equation}\label{E.maxpr}
  \sup_\om\abs u\ \le\ \sup_{\pom}\abs f ,
\end{equation}
and $u\ge0$ on $\om$ whenever $f\ge0$ on $\pom$. The same holds for $\LL^*$
with the direction of time reversed. Neither $\omega^{(X,t)}$ nor
$\omega^{*,(X,t)}$ depends on the direction in which $\om$ is written as a
graph.
\end{theorem}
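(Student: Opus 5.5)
The plan is to flatten the domain and build the parabolic measure on $\Uc$, where the boundary is flat and time-independent. I would then transfer back. The geometric input is only that $\om$ is a $\Lip(1,\tfrac12)$ graph domain.

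\emph{Step 1: flattening.} I would use a Dahlberg--Kenig--Stein type map adapted to the parabolic setting. A natural choice is $\rho(x,x_n,t)=(x,\,x_n+P_{\gamma x_n}\phi(x,t),\,t)$ with $P_\lambda$ the Hofmann--Lewis mollification. The bound $|\pd_\lambda P_\lambda\phi|\le C(n)\ell$ quoted after Lemma \ref{L.HL} shows that for $\gamma$ small, $\pd_{x_n}$ of the second component is bounded below. Hence $\rho$ is a bi-Lipschitz (in space, for fixed $t$) homeomorphism of $\overline\Uc$ onto $\overline\om$ preserving time. Only the Lipschitz bound \eqref{E.mollbds} with $l=1$ in spatial directions is needed, and that bound uses only $\Lip(1,\tfrac12)$.

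The time derivative $\pd_t P_{\gamma x_n}\phi$ is of size $x_n^{-1}$. Pulling back $\LL$ therefore gives $\wt\LL=-\pd_t+\divg(\wt A\nabla\cdot)+\wt B\cdot\nabla$ with $\wt A$ uniformly elliptic and a drift satisfying $|\wt B|\lesssim x_n^{-1}$. This singular drift is the main obstacle. To avoid it, I would instead prove existence directly on $\om$. This is possible because for each bounded exhaustion domain $\om_R:=\om\cap\{|X|<R,\ -R^2<t<R^2\}$, every boundary point satisfies an exterior parabolic cone condition in the $\Lip(1,\tfrac12)$ sense. Concretely, the set $\{y_n<\phi(y,t)-\ell(|y-y_0|+|t-t_0|^{1/2})\}$ lies in the complement. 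This gives Wiener/Petrowsky regularity of all boundary points, using the standard barrier argument for divergence-form operators with bounded measurable coefficients via the boundary Hölder estimate of Aronson/Nash--Moser type.

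\emph{Step 2: existence.} For $f\in C_c(\pom)$, extend $f$ continuously and solve on $\om_R$ by the Perron method. Boundary regularity gives continuity up to $\pd\om_R$. The weak maximum principle gives $|u_R|\le\sup|f|$ and positivity. I would let $R\to\infty$ with data equal to $f$ on $\pom$ and to $0$ on the artificial boundary, and use Caccioppoli and interior Hölder estimates. Along a subsequence, $u_R$ converges locally uniformly to a bounded weak solution $u$. Boundary Hölder continuity near $\pom$, uniform in $R$, gives $u=f$ continuously on $\pom$. Decay as $t\to-\infty$ holds since the solution vanishes until time $\inf\{\tau:(q,\tau)\in\supp f\}$, by causality. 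The map $f\mapsto u(X,t)$ is positive, linear and bounded by $\sup|f|$. Riesz representation then gives the probability measures $\omega^{(X,t)}$, with support in $\{\tau\le t\}$ by the same causality. Mass one follows by testing against data equal to $1$ on large sets together with the $t\to-\infty$ decay.

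\emph{Step 3: uniqueness and the remaining claims.} Uniqueness follows from the maximum principle on $\om$. The difference $w$ of two candidates is a bounded solution, continuous with zero boundary values, tending to $0$ as $t\to-\infty$. To show $w\equiv0$ I would compare $w$ on $\om_R$ with a barrier of the form $\varepsilon+C(|X|^2+2n\Lambda t)/R^2$-type. A Phragmén--Lindelöf argument at spatial infinity is needed, and this is the delicate point. I expect it to work because $w$ is bounded and the domain lies above a graph, so the usual quadratic barrier handles it. Estimate \eqref{E.maxpr} and positivity are built into the construction. $\LL^*$ is treated identically after the time reversal $t\mapsto-t$.

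Independence of the graph direction is then automatic. Definition \ref{D.pm} refers only to the set $\om$, its boundary and the class $C_c(\pom)$, none of which depend on coordinates. Uniqueness therefore forces the families built from any admissible $\nu$ to coincide.
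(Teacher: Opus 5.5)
Your architecture matches the paper's. You exhaust by bounded pieces whose boundary points are regular by the exterior $\Lip(1,\tfrac12)$ cone (built from $\phi(y_0,t_0)$, not $\phi(y,t)$). You take data $f$ on $\pom$ and $0$ on the artificial faces, let $R\to\infty$, apply Riesz, and deduce independence of the graph direction from uniqueness.

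The genuine gap is the step you flag yourself: the Phragm\'en--Lindel\"of argument in the uniqueness proof. The barrier $\Phi=|X|^2+2n\Lambda(t-\tau_0)$ is a supersolution for nondivergence operators $-\pd_t+a_{ij}\pd_{ij}$. For $\LL=-\pd_t+\divg(A\nabla\cdot)$, however, $\LL\Phi=-2n\Lambda+2\divg(AX)$ in the weak sense, and $\divg(AX)$ contains derivatives of $A$. For merely bounded measurable $A$ this is a distribution with no sign and no bound. Already in one space dimension, an upward jump $[a]>0$ of $a$ at a point $x_0>0$ puts the positive mass $2x_0[a]\delta_{x_0}$ into $(2ax)'$. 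Then $\LL\Phi\le0$ fails for every choice of constants, and the weak comparison principle cannot be invoked. Smoothing $A$ does not help, because the derivative terms are not uniformly bounded.

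So uniqueness is not proved. Neither, in your scheme, is the linearity of $f\mapsto u(X,t)$ needed for Riesz, since you extract $u$ along $f$-dependent subsequences. The paper avoids this dependence: $\omega_R^{(X,t)}|_{\pom}$ increases with $R$ by the maximum principle, so the whole family converges. A diagonal argument over a countable sup-norm dense subset of $C_c(\pom)$ would also work.

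To close the gap, use only estimates valid for measurable coefficients. One option is what the paper's decay argument on $\om^R\cap\{t>\tau_0\}$ amounts to. With $\tau_0$ fixed, the $\LL$-parabolic measure of the far faces, seen from a fixed $(X,t)$, tends to $0$ as $R\to\infty$. This follows from Aronson's Gaussian bounds, or from iterating the weak Harnack inequality over about $R/(t-\tau_0)^{1/2}$ spatial steps, using that this measure vanishes for $t\le\tau_0$. The maximum principle then gives $|w(X,t)|\le\sup_{\om\cap\{s=\tau_0\}}|w|$, and $\tau_0\to-\infty$ gives $w=0$. Another option is an energy estimate with weight $e^{-2|X|}$ and Gronwall for the subsolutions $(\pm w-\varepsilon)^+$. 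These vanish near $\pom$, and for $t\le\tau_0$ once $\sup_{t\le\tau_0}|w|\le\varepsilon$.

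Separately, your justification of mass one does not work. Since $u_{f_k}$ vanishes before the support of $f_k$, its behaviour as $t\to-\infty$ says nothing about $1-u_{f_k}$. What gives mass one is boundary H\"older continuity at a large scale, which you already use. Take $0\le f_k\le1$ with $f_k=1$ on $\Delta_{2\rho}(P,\tau)$ and $(X,t)\in B_\rho(P,\tau)$. Then $1-u_{f_k}$ vanishes on $\Delta_{2\rho}(P,\tau)$, and \eqref{E.bdryHolder} gives $1-u_{f_k}(X,t)\le C(\delta(X,t)/\rho)^\beta$. This is the paper's estimate $\omega_R^{(X,t)}(F_R)\lesssim(\delta(X,t)/R)^\beta$.
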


\begin{proof}
We only sketch the construction, which is the classical one. Exhaust $\om$ by
the bounded domains
$$\om^R:=\set{(X,t)\in\om:\ \abs x<R,\ x_n<\phi(x,t)+R,\ -R^2<t<R^2} ,$$
and write $F_R$ for the part of the parabolic boundary $\pd_p\om^R$ that does
not lie on $\pom$.\medskip

On each $\om^R$ the continuous Dirichlet problem is solvable by the
Perron--Wiener--Brelot--Bauer method, every point of $\pd_p\om^R$ being regular
because the complement of $\om$ satisfies the measure density condition, and
there is a unique Borel measure $\omega_R^{(X,t)}$ on $\pd_p\om^R$ representing
the solution. This is \cite[\S1]{N}, where the coefficients are taken
qualitatively smooth so that the solution is classical. That hypothesis is
removed by taking weak limits of solutions with smooth coefficients, as in
the case $B=0$ of \cite[Chapter I, \S3]{HLmem}, with estimates independent of
the smoothing.
\cite{N} states its results for symmetric $A$, but the construction just quoted
uses only the maximum principle.

Let $f\in C_c(\pom)$ and, for $R$ so large that $\supp f$ meets $\pd_p\om^R$
only in $\pom$, let $u_R$ be the solution on $\om^R$ with datum $f$ on
$\pom\cap\pd_p\om^R$ and $0$ on $F_R$. Boundary H\"older continuity for $\LL$,
with an exponent $\beta\in(0,1)$ depending only on $n$, $\lambda$, $\Lambda$
and $\ell$ \cite[Chapter I, Lemma 3.9]{HLmem}, gives
$$\omega_R^{(X,t)}(F_R)\ \lesssim\ \Big(\frac{\delta(X,t)}R\Big)^\beta .$$
By the maximum principle the $u_R$ therefore form a Cauchy family, uniformly on
the compact subsets of $\om$, and their limit $u$ is a bounded weak solution of
$\LL u=0$, continuous on $\overline\om$ with the value $f$ on $\pom$. The
restrictions $\omega_R^{(X,t)}\big|_{\pom}$ increase with $R$. Their limit
$\omega^{(X,t)}$ represents $u$ by \eqref{E.pmsol}, is supported in
$\set{(q,\tau)\in\pom:\tau\le t}$ because each $\omega^{(X,t)}_R$ is, and is a probability
measure because $\omega_R^{(X,t)}(\pd_p\om^R)=1$ while
$\omega_R^{(X,t)}(F_R)\to0$. The two assertions of \eqref{E.maxpr} are the
maximum principle on each $\om^R$.\medskip

For uniqueness let $w$ be a bounded weak solution in $\om$, continuous on
$\overline\om$, vanishing on $\pom$, with $\abs w\to0$ uniformly as
$t\to-\infty$. The same decay, applied on $\om^R\cap\set{t>\tau_0}$ and then
letting $R\to\infty$, gives
$$\abs{w(X,t)}\ \lesssim\ \Big(\sup_{\om\cap\set{t=\tau_0}}\abs w\Big)
  \Big(\frac{\delta(X,t)}{(t-\tau_0)^{1/2}}\Big)^\beta ,$$
and $\tau_0\to-\infty$ gives $w=0$. The assertions for $\LL^*$ are the same
with the direction of time reversed, $\LL^*$ being uniformly elliptic with the
same constants.

Finally, every ingredient above: the set $\om$, the operator $\LL$, the
boundary $\pom$ and the parabolic distance is attached to $\om$ and not to a
coordinate system, so neither measure depends on the direction in which $\om$ is
written as a graph.
\end{proof}

\subsection{Boundary and Green-function decay}\label{ss.potdecay}

We use the following local consequences of the parabolic boundary
theory. They are stated on a graph domain, so their constants have no
dependence on a diameter.

If $Z=(X,t)\in\om$, we abbreviate $\omega^{(X,t)}$ to $\omega^Z$, and use
the same convention for the adjoint parabolic measure.

\begin{proposition}\label{P.potdecay}
Let $\om$ be as in \eqref{E.dom}, and let $\LL$ be as in \eqref{E.pde}. There
are $\beta\in(0,1)$ and $C$, depending only on $n$, $\lambda$, $\Lambda$ and
$\ell$, with the following properties.
\begin{enumerate}
\item If $v$ solves $\LL v=0$ in $B_{2R}(P,\tau)\cap\om$ and vanishes on
  $B_{2R}(P,\tau)\cap\pom$, then
  \begin{equation}\label{E.bdryHolder}
   |v(X,t)|\ \le\ C\left(\frac{\delta(X,t)}R\right)^\beta
   \sup_{B_{2R}(P,\tau)\cap\om}|v|
  \end{equation}
  for $(X,t)\in B_R(P,\tau)\cap\om$.
\item The Green function $G(X,t;Y,s)$ exists, vanishes when $t\le s$, and
  satisfies the Gaussian upper bound
  \begin{equation}\label{E.GreenGaussian}
   0\le G(X,t;Y,s)\ \le\
   \frac{C}{(t-s)^{n/2}}
   \exp\left(-\frac{|X-Y|^2}{C(t-s)}\right),\qquad t>s.
  \end{equation}
  If $\rho=\|(X,t)-(Y,s)\|\ge C\delta(Y,s)$, then
  \begin{equation}\label{E.Greenbdry}
   G(X,t;Y,s)\ \le\ C\,\delta(Y,s)^\beta\rho^{-n-\beta}.
  \end{equation}
\item Let $\Delta_r=\Delta_r(P,\tau)$ and $R\ge Cr$.  Suppose that
  $Z_R=(X_R,t_R)$ is time-compatible at scale $R$, in the sense that
  \[
   \delta(Z_R)\approx R,\qquad
   \inf_{z\in\Delta_r}\|Z_R-z\|\approx R,\qquad t_R-\tau\ge cR^2.
  \]
  Then
  \begin{equation}\label{E.omegadecay}
   \omega^{Z_R}(\Delta_r)\ \le\ C\left(\frac rR\right)^{n+\beta}.
  \end{equation}
\item Let $J_R=J_R(P,\tau)$ be a backward parabolic cylinder centred on
  $\pom$. If $v\ge0$ is a solution which vanishes on $J_{CR}\cap\pom$, then
  the boundary Carleson estimate, followed by an interior
  Harnack chain, gives
  \begin{equation}\label{E.boundaryCarleson}
   \sup_{J_R\cap\om}v\ \le\ C v(Z_R^+),
  \end{equation}
  where $Z_R^+$ is a scale-$R$ corkscrew lying a fixed multiple of $R^2$
  later than the cylinder. Fixed changes in the cylinders and in the
  location of $Z_R^+$ only change $C$.
\end{enumerate}
The corresponding statements hold for the adjoint operator after reversing
the direction of time.
\end{proposition}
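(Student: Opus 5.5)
The four assertions are local and classical, and on a $\Lip(1,\tfrac12)$ graph domain I would derive them from two interior facts: the parabolic Harnack inequality and Aronson's Gaussian bounds for $\LL$ and $\LL^*$. The one geometric input is the \emph{exterior measure density} of $\om^c$. For $(P,\tau)\in\pom$ and every $r>0$, \eqref{E.lip} gives $|B_r(P,\tau)\setminus\om|\ge c(n,\ell)\,r^{n+2}$, and the same holds for the forward and backward halves of the ball. All constants are scale invariant, so none depends on a diameter. The statements for $\LL^*$ follow from those for $\LL$ by $t\mapsto-t$.

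\emph{Step (1).} Extend $v^{\pm}$ by zero outside $\om$. The extension is a nonnegative subsolution in $B_{2R}(P,\tau)$. The weak Harnack inequality applied to $M-v^+$, with $M=\sup|v|$, together with the exterior density, reduces $\sup_{B_{\rho/2}}v^+$ to $(1-\varepsilon)M_\rho$ at every boundary point and scale $\rho\le R$. Iterating gives $\sup_{B_\rho(q,s)\cap\om}|v|\le C(\rho/R)^\beta M$ for $(q,s)\in\pom\cap B_R$. Taking $(q,s)$ to be a nearest boundary point of $(X,t)$ and $\rho\approx\delta(X,t)$ gives \eqref{E.bdryHolder}. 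This is \cite[Chapter I, Lemma 3.9]{HLmem}.

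\emph{Step (2).} I would build $G$ on the exhaustion $\om^R$ of Theorem \ref{T.pmeasure}, first for smooth coefficients, and then pass to limits as in that proof. Two bounds drive the construction.
\begin{itemize}
\item Positivity, causality and monotonicity in $R$ come from the maximum principle.
\item Comparison with the global fundamental solution $\Gamma$ of any uniformly elliptic extension of $A$ to $\R^n\times\R$ gives $0\le G_{\om^R}\le\Gamma$. Aronson's bound then yields \eqref{E.GreenGaussian}, uniformly in $R$.
\end{itemize}
For \eqref{E.Greenbdry}, fix $(X,t)$ and regard $(Y,s)\mapsto G(X,t;Y,s)$ as a solution of $\LL^*$ away from the pole that vanishes on $\pom$. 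Let $q$ be a nearest boundary point to $(Y,s)$ and $R'=\rho/(2C)$. The ball $B_{2R'}(q)$ avoids the pole. By \eqref{E.GreenGaussian}, the supremum of $G$ over that ball is at most $C\rho^{-n}$. The adjoint form of step (1) then gives $G\le C(\delta(Y,s)/\rho)^\beta\rho^{-n}$.

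\emph{Step (3).} I would compare $u=\omega^{\,\cdot}(\Delta_r)$ with $r^nG(\cdot\,;Y_r)$. Here $Y_r$ is an interior point with $\delta(Y_r)\approx r$, placed about $Cr^2$ earlier in time than $\Delta_r$ and within distance $Cr$ of it. The argument has three parts.
\begin{itemize}
\item By the Gaussian lower bound of Aronson and interior Harnack, $r^nG(\cdot\,;Y_r)\ge c$ on the lateral and top boundary of a slightly enlarged cylinder around $\Delta_r$. Since $u\le1$, the maximum principle on $\om$ minus that cylinder gives $u\le Cr^nG(\cdot\,;Y_r)$ outside it.
\item This comparison needs $u$ to be handled near $\Delta_r$. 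I would take a continuous majorant of $\mathbf1_{\Delta_r}$ supported in $\Delta_{2r}$ and pass to the limit by regularity of $\omega$.
\item Apply \eqref{E.Greenbdry} at $Z_R$, with $\rho\approx R$ and $\delta(Y_r)\approx r$. This gives $\omega^{Z_R}(\Delta_r)\le Cr^n\,r^\beta R^{-n-\beta}$, which is \eqref{E.omegadecay}.
\end{itemize}
The time-compatibility hypothesis on $Z_R$ guarantees that the pole precedes $Z_R$ and that $\rho\approx R$.

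\emph{Step (4).} The boundary Carleson estimate I would prove by the Salsa / Fabes--Garofalo--Salsa iteration.
\begin{itemize}
\item Harnack chains in the backward cylinder show that, at distance $\delta$ from $\pom$, $v(Z)\le(R/\delta)^{\gamma}v(Z_R^+)$ for some $\gamma$. The built-in time lag of $Z_R^+$ is exactly what permits forward-in-time chaining.
\item Combined with the Hölder decay \eqref{E.bdryHolder}, a standard contradiction argument (a sequence of points where $v$ is too large, each closer to the boundary) shows that $\sup_{J_R\cap\om}v\le Cv(Z_R^+)$.
\item Changing the cylinders or the location of $Z_R^+$ costs only further Harnack chains.
\end{itemize}

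I expect the main obstacle to be step (4), and to a lesser extent the comparison in step (3). In both places the time-varying boundary makes the Harnack chains delicate: they must run forward in time while staying a distance $\approx\delta$ from a boundary that is only $\tfrac12$-Hölder in $t$. I would handle this with the explicit chains built from the graph structure in \cite[Chapter I]{HLmem}. Those chains use only the $\Lip(1,\tfrac12)$ constant $\ell$, which is why no Lewis--Murray dependence enters the constants.
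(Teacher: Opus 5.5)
\emph{Verdict: your steps (1), (2) and (4) are fine, but step (3) has a genuine gap.}

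Steps (1), (2) and (4) follow the paper's route. For \eqref{E.bdryHolder} the paper cites \cite[Chapter I, Lemma 3.9]{HLmem}. For \eqref{E.Greenbdry} it uses Aronson's bound together with the same boundary-H\"older argument you give. For \eqref{E.boundaryCarleson} it uses Nystr\"om's Carleson estimate and Harnack chains.

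In step (3) you run the maximum principle on $\om$ minus an enlarged cylinder $\mathcal Z$. This requires $u\le Cr^nG(\cdot\,;Y_r)$ on the whole of $\pd\mathcal Z\cap\om$, meaning its lateral and top parts. That set necessarily reaches $\pom$: $\mathcal Z$ is a neighbourhood of a piece of the graph, so its faces cross the graph. Near those crossing points $G(\cdot\,;Y_r)\to0$, so the asserted bound $r^nG(\cdot\,;Y_r)\ge c$ is false there. Interior Harnack, together with a localized lower bound, gives it only on the part of $\pd\mathcal Z$ at distance $\gtrsim r$ from $\pom$; Aronson's lower bound is for the global fundamental solution, not the Dirichlet Green function.

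Passing to a continuous majorant $\psi$ supported in $\Delta_{2r}$ does not rescue the argument. Near the corner where $\pd\mathcal Z$ meets $\pom$, both $u_\psi$ and $G(\cdot\,;Y_r)$ vanish. You only know $u_\psi$ decays at the rate $(\delta/r)^\beta$ of \eqref{E.bdryHolder}, and nothing in your toolkit bounds $G(\cdot\,;Y_r)$ from below at that rate. Closing the comparison this way is exactly the boundary comparison principle. That principle rests on the backward Harnack inequality and is much deeper than the decay estimate you want.

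The fix is to get the comparison from the Green representation rather than the maximum principle. This is how the result the paper quotes (\cite[Chapter I, Lemma 3.10]{HLmem}, \cite[\S\S3--4]{N}) is proved. Write $B_s:=B_s(P,\tau)$. Take $\Psi\in C_c^\infty(B_{2r})$ with $\Psi=1$ on $B_r$, $|\nabla\Psi|\lesssim r^{-1}$ and $|\pd_t\Psi|\lesssim r^{-2}$, and put $\psi:=\Psi|_{\pom}$. Let $u_\psi$ be the solution with datum $\psi$. Then $u_\psi-\Psi$ has zero boundary data and $\Psi(Z_R)=0$. Representing it with $G(Z_R;\cdot)$ and integrating by parts in $Y$ gives
\[
 \omega^{Z_R}(\Delta_r)\le u_\psi(Z_R)\lesssim
 r^{-2}\iint_{B_{2r}\cap\om}G(Z_R;\cdot)
 +r^{-1}\iint_{B_{2r}\cap\om}|\nabla_YG(Z_R;\cdot)| .
\]
The function $G(Z_R;\cdot)$ is an adjoint solution that vanishes on $\pom$ and has its pole at distance $\gtrsim R$. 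Boundary Caccioppoli for it therefore bounds the right side by $Cr^n\sup_{B_{4r}\cap\om}G(Z_R;\cdot)$.

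Every point of $B_{4r}\cap\om$ has $\delta<4r$ and $\rho\gtrsim R\ge C\delta$. Hence \eqref{E.Greenbdry} yields \eqref{E.omegadecay} at once, with no auxiliary pole $Y_r$ and no lower bound on $G$. The paper's version passes through $r^nG(Z_R;A_r)$, which is this same representation followed by the adjoint Carleson estimate.
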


\begin{proof}
The boundary estimate \eqref{E.bdryHolder} is the boundary oscillation estimate
of De Giorgi--Nash--Moser. In the present geometry it is
\cite[Chapter I, Lemma 3.9]{HLmem}; see also \cite[\S2]{N}. The Green function
is obtained by approximation with smooth coefficients as in
\cite[Chapter I, \S3]{HLmem}, and \eqref{E.GreenGaussian} is the estimate of
\cite{Ar}.

For fixed $(X,t)$ the function $G(X,t;\cdot)$ solves the adjoint equation away
from its pole and has zero boundary values.  We apply \eqref{E.bdryHolder} to
it on the ball of radius $\rho/4$ about the boundary projection of $(Y,s)$.
That ball excludes the pole, because $\delta(Y,s)\le\rho/C$.  On it
$|X-Y'|\approx\rho$ whenever $t-s'$ is small, so \eqref{E.GreenGaussian} and
$\sup_{\varsigma>0}\varsigma^{-n/2}e^{-\rho^2/C\varsigma}\approx\rho^{-n}$
bound $G(X,t;\cdot)$ there by $C\rho^{-n}$.  This gives \eqref{E.Greenbdry}.

The comparison between the Green function and parabolic measure gives
\[
 \omega^{Z_R}(\Delta_r)\ \lesssim\ r^nG(Z_R;A_r),
\]
where $A_r$ is a time-compatible corkscrew point for $\Delta_r$.  This is the
local form of \cite[Chapter I, Lemma 3.10]{HLmem} and
\cite[\S\S3--4]{N}.  Since $\|Z_R-A_r\|\approx R$,
\eqref{E.Greenbdry} and $\delta(A_r)\approx r$ give
\eqref{E.omegadecay}.

The comparison is first applied to continuous boundary cutoffs decreasing to
$\mathbf1_{\Delta_r}$, and the regularity of parabolic measure then gives the
estimate for the surface ball.  These arguments use the equations for $A$ and
$A^{\mathsf T}$ separately and do not require symmetry of $A$.

The estimate \eqref{E.boundaryCarleson} is the classical Carleson estimate
\cite[Lemma 2.5]{N}, together with the interior Harnack inequality and the
Harnack-chain construction of \cite[\S2]{N}.
\end{proof}

\subsection{The Regularity problem for the heat operator on $\Uc$}
\label{ss.flat}

\begin{proposition}\label{P.flat}
Let $1<p<\infty$, let $f\in\dot L^p_{1,1/2}(\R^{n-1}\times\R)$ and let $u$ be
the caloric extension of $f$ to $\Uc$, that is, the solution of $\pd_tu=\Delta u$
in $\Uc$ with $u=f$ on $\pd\Uc$. Then
\begin{equation}\label{E.flat}
  \big\|N(\nabla_Xu)\big\|_{L^p(\R^{n-1}\times\R)}\ \lesssim\
  \|f\|_{\dot L^p_{1,1/2}(\R^{n-1}\times\R)} .
\end{equation}
\end{proposition}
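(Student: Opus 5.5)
We would prove \eqref{E.flat} by writing the caloric extension explicitly through Fourier analysis in the boundary variables, and then reducing nontangential control to a parabolic maximal function. Take the Fourier transform in $(y,s)$ with dual variables $(\xi,\tau)$. The bounded solution of $\pd_tu=\Delta u$ in $\Uc$ with datum $f$ is
\[
 \wh u(\xi,x_n,\tau)=e^{-x_n\sqrt{\abs\xi^2+i\tau}}\,\wh f(\xi,\tau),
\]
with the branch of the square root having positive real part. Note that $\abs{\sqrt{\abs\xi^2+i\tau}}\approx\|(\xi,\tau)\|$ and $\operatorname{Re}\sqrt{\abs\xi^2+i\tau}\gtrsim\|(\xi,\tau)\|$. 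The components of $\nabla_Xu$ then have symbols
\[
 i\xi\, e^{-x_n\sqrt{\cdot}}\wh f,\qquad -\sqrt{\abs\xi^2+i\tau}\;e^{-x_n\sqrt{\cdot}}\wh f .
\]
Put $g:=\Dpar f\in L^p$. Each component of $\nabla_Xu(\cdot,x_n,\cdot)$ equals $T_{x_n}g$, where $T_{x_n}$ is a Fourier multiplier with symbol $m(x_n\xi,x_n^2\tau)$. Here $m(\zeta,\sigma)=\frac{i\zeta}{\|(\zeta,\sigma)\|}e^{-\sqrt{\abs\zeta^2+i\sigma}}$, or the analogous expression for the normal component. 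The multiplier is thus a parabolic dilate at scale $x_n$ of a single symbol $m$, homogeneous of degree zero near the origin and exponentially decaying at infinity.

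The key step is to show that the kernel $K$ of $m$, that is $K=\mathcal F^{-1}m$, satisfies the parabolic decay bound
\[
 \abs{K(y,s)}\lesssim(1+\|(y,s)\|)^{-(n+1)-\epsilon}.
\]
Granting this bound, we obtain
\[
 \sup_{\|(y,s)-(q,\tau)\|<(1+a)x_n}\abs{T_{x_n}g(y,s)}\lesssim Mg(q,\tau),
\]
where $M$ is the parabolic Hardy--Littlewood maximal function. This holds because the dilated kernel, translated within a cone aperture, is dominated by a radially decreasing integrable majorant. Then $\|N(\nabla_Xu)\|_{L^p}\lesssim\|Mg\|_{L^p}\lesssim\|\Dpar f\|_{L^p}$, and \eqref{E.norms} concludes the proof.

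The obstacle is the kernel decay, since $m$ is not smooth at the origin: it behaves like $\zeta/\|(\zeta,\sigma)\|$ there. We would avoid this by splitting differently. Write $\nabla_Xu=\nabla_X(P^{\text{heat}}_{x_n}f)$, with $P^{\text{heat}}_{x_n}$ the multiplier $e^{-x_n\sqrt{\cdot}}$, acting on $f$ rather than on $g$.

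The tangential components, $\nabla_yu=e^{-x_n\sqrt\cdot}\,\nabla_yf$, are handled by the kernel of $e^{-\sqrt{\abs\zeta^2+i\sigma}}$. This kernel is an honest parabolic Poisson-type kernel, namely the caloric measure density of the half-space, $\pd_{x_n}$ of the Gaussian integrated in time. It is explicit and positive, with decay $\approx x_n\|(y,s)\|^{-(n+2)}$, which gives $N(\nabla_yu)\lesssim M(\nabla_yf)$ directly.

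For the normal derivative $\pd_{x_n}u$, write its symbol as
\[
 \sqrt{\abs\xi^2+i\tau}\,e^{-x_n\sqrt\cdot}=\frac{\abs\xi^2+i\tau}{\sqrt{\abs\xi^2+i\tau}}e^{-x_n\sqrt\cdot}.
\]
This yields terms $R_j\pd_{y_j}$ and $R_0\,\dhalf$ applied to $f$, composed with the Poisson-type kernel, where $R_j,R_0$ are parabolic Riesz-type transforms of degree zero, bounded on $L^p$. Commuting the Riesz transforms outside of the maximal operator is not free. The cleanest way is to note that $\pd_{x_n}u$ is itself caloric with boundary datum $h:=-\sqrt{-\Delta_y+\pd_s}\,f$, that is, the Dirichlet-to-Neumann image of $f$. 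By Calderón--Zygmund theory for the degree-zero parabolic multiplier $\frac{\sqrt{\abs\xi^2+i\tau}}{\|(\xi,\tau)\|}$ (Mikhlin–Hörmander in the parabolic setting), $\|h\|_{L^p}\lesssim\|\Dpar f\|_{L^p}$. The Poisson-kernel majorant then gives $N(\pd_{x_n}u)\lesssim Mh$.

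Combining the tangential and normal bounds, we obtain
\[
 \|N(\nabla_Xu)\|_{L^p}\lesssim\|M\nabla_yf\|_p+\|Mh\|_p\lesssim\|f\|_{\dot L^p_{1,1/2}},
\]
which is \eqref{E.flat}. The one step requiring care is verifying the parabolic Poisson kernel bound for the half-space heat operator, which we expect to follow from the explicit formula.
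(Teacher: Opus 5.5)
Your final argument is correct, and it takes a genuinely different route from the paper. The paper gives no argument of its own for Proposition \ref{P.flat}. It quotes the estimate as the block-form case $A_\parallel=I$ of the Regularity theorem \cite[Theorem 1.8]{DLP} for operators with Carleson-type coefficients.

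You prove it directly for the heat equation, and your one flagged step does hold. The half-space caloric Poisson kernel is $\mathcal P(y,x_n,s)=\frac{x_n}{s}(4\pi s)^{-n/2}e^{-(|y|^2+x_n^2)/4s}\mathbf 1_{\{s>0\}}$. This is $-2\pd_{x_n}$ of the $n$-dimensional Gaussian; no integration in time is involved. Its Fourier transform in $(y,s)$ is $e^{-x_n\sqrt{|\xi|^2+i\tau}}$, and $0\le\mathcal P\lesssim x_n\,(x_n+\|(y,s)\|)^{-(n+2)}$. This is an integrable radially decreasing majorant with one power of decay to spare over the homogeneous dimension $n+1$. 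Writing $\mathcal P_{x_n}:=\mathcal P(\cdot,x_n,\cdot)$, the identities $\nabla_yu=\mathcal P_{x_n}*\nabla_yf$ and $\pd_{x_n}u=\mathcal P_{x_n}*h$ give the pointwise bound $N(\nabla_Xu)\lesssim M(\nabla_yf)+M(h)$. Everything then reduces to $L^p$ boundedness of the Dirichlet-to-Neumann map $f\mapsto h$.

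You were right to discard your first route. A symbol that is homogeneous of degree zero and non-constant near the origin has a kernel decaying only like $\|(y,s)\|^{-(n+1)}$, so no integrable majorant exists. The fix you chose moves the degree-zero multiplier onto the boundary datum and keeps only the positive kernel inside the maximal function; this is the classical proof. It is elementary and self-contained, and its pointwise domination is stronger than \eqref{E.flat}. The paper's citation is shorter and consistent with the \cite{DLP,DLPN} machinery it uses elsewhere, but it invokes a far deeper theorem than this special case needs.

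Two routine points should be tightened.

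\emph{The multiplier theorem.} $\|(\xi,\tau)\|=(|\xi|^2+|\tau|)^{1/2}$ is only Lipschitz across $\{\tau=0\}$, so the smooth parabolic Mikhlin--H\"ormander theorem does not literally apply to $\sqrt{|\xi|^2+i\tau}/\|(\xi,\tau)\|$. This symbol stays bounded under products of the operators $\xi_j\pd_{\xi_j}$ and $\tau\pd_\tau$, so a Marcinkiewicz--Lizorkin multiplier theorem does apply. Alternatively, use your own splitting $\hat h=i\sum_j\frac{\xi_j}{\sqrt{|\xi|^2+i\tau}}\widehat{\pd_{y_j}f}-\frac{i\operatorname{sgn}(\tau)|\tau|^{1/2}}{\sqrt{|\xi|^2+i\tau}}\widehat{\dhalf f}$. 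Its spatial multipliers are smooth and of parabolic degree zero, so parabolic Mikhlin applies to them. The time multiplier needs the same Marcinkiewicz-type input that underlies \eqref{E.norms}.

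\emph{Density.} Derive the Fourier identities on a dense class such as $\hat f\in C^\infty_c(\R^n\setminus\{0\})$. Then pass to general $f\in\dot L^p_{1,1/2}$ using $\mathcal P(\cdot,1,\cdot)\in L^{p'}$ and Fatou's lemma.
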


\begin{proof}
This is the solvability of the $L^p$ Regularity problem for the heat
equation on a half-space, the block-form case $A_\parallel=I$ of
\cite[Theorem 1.8]{DLP}.
\end{proof}

\subsection{The square function and the half derivative}\label{ss.NEU}

The two results of this subsection are quoted from \cite{DLPN}. The square
function estimate is stated there for operators without a first-order term;
the half-derivative estimate already allows the drift that occurs here. The
square-function estimate needed for $w$ is obtained directly from its caloric
preimage in Proposition \ref{P.Sw}.

\begin{theorem}[{\cite[Theorem 7.1]{DLPN}}]\label{T.SN}
Let $L=-\pd_t+\divg(A\nabla\cdot)$ on $\Uc$ be uniformly elliptic with
constants $\lambda$ and $\Lambda$, let
\begin{equation}\label{E.carlA}
  \d\mu_A:=\sup_{B_{x_n/2}(X,t)}\br{x_n\abs{\nabla_XA}^2
   +x_n^3\abs{\pd_tA}^2}\d X\d t
\end{equation}
be Carleson, let
\begin{equation}\label{E.boundA}
  x_n\abs{\nabla_XA}+x_n^2\abs{\pd_tA}\ \le\ K<\infty ,
\end{equation}
and let $h$ be an energy solution of $Lh=0$. Then for
every $p\in(0,\infty)$
$$\big\|S(\nabla_Xh)\big\|_{L^p}\ \le\ C\big\|N(\nabla_Xh)\big\|_{L^p},$$
with $C=C(n,p,\lambda,\Lambda,K,\Cnorm{\mu_A})$, provided the left-hand side is
finite.
\end{theorem}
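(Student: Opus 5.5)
Since Theorem \ref{T.SN} is quoted from \cite{DLPN}, it suffices to recall the shape of the argument behind it. I would prove it by a local integration-by-parts estimate on sawtooth regions, followed by a good-$\lambda$ argument that passes to every $p\in(0,\infty)$.

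Fix a spatial direction $k$ and put $v:=\pd_k h$. Differentiating $Lh=0$ gives
\[
Lv=-\divg\big((\pd_kA)\nabla h\big).
\]
By \eqref{E.boundA}, the right-hand side is locally of size $x_n^{-1}|\nabla h|$ on Whitney balls. The interior Caccioppoli inequality for this inhomogeneous equation then controls Whitney averages of $|\nabla v|^2x_n^2$ by Whitney averages of $|\nabla h|^2$. This is where $K$ enters. It also shows that $S(\nabla h)$ is well defined, and that it is controlled locally once the near-boundary part is handled.

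The core step is the following localised estimate. Let $E\subset\R^{n-1}\times\R$ be a set on which $N(\nabla h)\le\gamma\lambda$. Let $\Psi$ be a smooth cutoff adapted to the sawtooth region over $E$ inside a Carleson box $T(\Delta)$, with $|\nabla\Psi|\lesssim x_n^{-1}$ and $|\pd_t\Psi|\lesssim x_n^{-2}$ on its transition region. I would use a smooth cutoff rather than the sharp sawtooth, because parabolic sawtooth boundaries are only H\"older in $t$. I would aim for
\[
\iint |\nabla v|^2x_n\Psi^2\ \lesssim\ \big(1+\Cnorm{\mu_A}\big)\gamma^2\lambda^2|\Delta| .
\]
To prove it, I would use ellipticity to write the left side as at most $\lambda^{-1}\iint A\nabla v\cdot\nabla v\,x_n\Psi^2$, and then use the equation for $v$. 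The time term $-\iint\pd_tv\,v\,x_n\Psi^2=\tfrac12\iint v^2x_n\pd_t(\Psi^2)$ lives on the transition region and is bounded by $\gamma^2\lambda^2|\Delta|$. The terms with $\nabla\Psi$ are handled in the same way, after Cauchy--Schwarz and absorption. The term coming from $\nabla x_n$ is $\iint v\,(A\nabla v)_n\Psi^2$. I would write $a_{nj}v\pd_jv=\tfrac12a_{nj}\pd_j(v^2)$ and integrate by parts in $x_j$. This leaves terms with $\pd_ja_{nj}$, which are bounded by the Carleson embedding of $\mu_A$ against $N(\nabla h)^2$. For $j=n$ it also leaves a boundary term at the bottom of the sawtooth, which is bounded by $\gamma^2\lambda^2$ times the area. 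The forcing term $\iint(\pd_kA)\nabla h\cdot\nabla(vx_n\Psi^2)$ is treated by Cauchy--Schwarz: one factor is $\big(\iint x_n|\nabla A|^2|\nabla h|^2\Psi^2\big)^{1/2}\lesssim\Cnorm{\mu_A}^{1/2}\gamma\lambda|\Delta|^{1/2}$, and the other factor is absorbed into the left side. The $x_n^3|\pd_tA|^2$ part of $\mu_A$ is needed where $\pd_t$ falls on coefficients after moving a time derivative. That happens when the time term is symmetrised for nonsymmetric $A$, and in the Whitney-scale reduction.

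Given the sawtooth estimate, the standard Whitney decomposition of $\{S(\nabla h)>\lambda\}$ yields the good-$\lambda$ inequality
\[
\big|\{S_{a}(\nabla h)>2\lambda,\ N(\nabla h)\le\gamma\lambda\}\big|\ \le\ C\gamma^2\big|\{S_{a'}(\nabla h)>\lambda\}\big|.
\]
Integrating in $\lambda$, changing the aperture, and absorbing (legitimate because the left side is assumed finite) gives the bound for all $0<p<\infty$. I expect the main obstacle to be the bookkeeping of the boundary terms on the time-irregular sawtooth with a nonsymmetric $A$. Specifically, one has to make sure that every term in which a derivative falls on $A$ is paired with exactly the weight appearing in \eqref{E.carlA}.
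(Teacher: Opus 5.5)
The paper does not prove this statement. It is quoted from \cite[Theorem 7.1]{DLPN}, so your sketch has to stand on its own. Its outline is the standard one: a local $L^2$ bound on a smoothed sawtooth, then a good-$\lambda$ inequality, then absorption using finiteness of $\|S(\nabla_Xh)\|_{L^p}$. The time term and the $\nabla\Psi$ terms are handled correctly.

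The gap is in the terms produced by the factor $\nabla x_n=e_n$. After integrating $\tfrac12a_{nj}\pd_j(v^2)\Psi^2$ by parts you are left with $\tfrac12\iint(\pd_ja_{nj})v^2\Psi^2$. The forcing term also contains, besides the piece you treat, $\iint\big((\pd_kA)\nabla h\big)\cdot e_n\,v\,\Psi^2$. Both carry one derivative of $A$ and \emph{no} power of $x_n$, so on the sawtooth you can only bound them by $(\gamma\lambda)^2\iint_{T(\Delta)}\abs{\nabla_XA}$. The Carleson embedding of $\mu_A$ controls $\iint\abs{\nabla h}^2\d\mu_A$, that is, the density $x_n\abs{\nabla_XA}^2$. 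The bound $x_n\abs{\nabla_XA}\le K$ only gives $\abs{\nabla_XA}\ge K^{-1}x_n\abs{\nabla_XA}^2$, which is the wrong direction.

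For the same reason your Cauchy--Schwarz step in the forcing term does not close. For the $e_n$ piece, the factor paired with $\big(\iint x_n\abs{\nabla_XA}^2\abs{\nabla h}^2\Psi^2\big)^{1/2}$ is $\big(\iint x_n^{-1}v^2\Psi^2\big)^{1/2}$. This diverges logarithmically at $x_n=0$ and cannot be absorbed.

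This is not bookkeeping. Take $\eta$ a bump supported in $[1,\tfrac32]$ and
$A=\big(2+\sum_{k\ge1}k^{-1}\eta(2^kx_n)\sin(2^kx_1)\big)I$.
Then $K<\infty$ and $\mu_A$ is Carleson, since the dyadic layers contribute $\sum_kk^{-2}$. But $\iint_{T(\Delta)}\abs{\nabla_XA}\approx\abs{\Delta}\sum_kk^{-1}=\infty$. So absolute-value bounds cannot dispose of these terms.

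What is missing is a way to recover the lost factor $x_n$ in these terms.

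For tangential indices this can be done. For example, write $1=\pd_nx_n$ and integrate by parts in $x_n$. Then move the resulting mixed second derivative of $A$ (such as $\pd_j\pd_na_{nj}$, $j<n$, or $\pd_k\pd_na_{nj}$, $k<n$) back onto the solution factors by a tangential integration by parts. What remains is either $x_n$-weighted, of the form $\iint x_n\abs{\nabla_XA}\abs{\nabla h}\abs{\nabla v}\Psi^2$, or lives on the transition region or the bottom boundary. Your Carleson and absorption arguments do handle those.

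The purely normal combinations, namely $\pd_na_{nn}$ and $k=n$ in the forcing term, need further structure, such as a normalisation of the last row of $A$. In the parabolic setting such steps produce time derivatives of the coefficients. This is the kind of place where the $x_n^3\abs{\pd_tA}^2$ part of $\mu_A$ is genuinely needed.

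Your account of where that part enters does not match your own computation. Your time term $-\iint\pd_tv\,v\,x_n\Psi^2$ contains no coefficient, and symmetry of $A$ plays no role in it. So as written your argument never uses time regularity of $A$, which is a symptom of the missing step.
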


\noindent The supremum over the Whitney ball in \eqref{E.carlA} already gives
\eqref{E.boundA} with $K\lesssim\Cnorm{\mu_A}^{1/2}$. We state the two
separately, as \cite{DLPN} does, because $K$ is what the pointwise estimates
use.

\noindent The proviso $\|S(\nabla_Xh)\|_{L^p}<\infty$ is what the good-$\lambda$
inequality behind the theorem needs and is not part of its conclusion. It is
verified for our solution in Step 2 of the proof of Theorem \ref{T.trace}.

\begin{theorem}[{\cite[Theorem 1.7]{DLPN}}]\label{T.half}
Let
$$L_{\mathbf b}=-\pd_t+\divg_X(A\nabla_X\cdot)
  +\mathbf b\cdot\nabla_X$$
on $\Uc$, where $A$ is as in Theorem \ref{T.SN}. Suppose that $\mathbf b$
has weak first spatial and time derivatives and satisfies
\begin{equation}\label{E.ghyp}
 \begin{aligned}
  x_n\abs{\mathbf b}+x_n^2\abs{\nabla_X\mathbf b}
  +x_n^3\abs{\pd_t\mathbf b}&\le K_{\mathbf b}<\infty,\\
  \d\mu_{\mathbf b}:=
  \br{\abs{\mathbf b}^2x_n+\abs{\nabla_X\mathbf b}^2x_n^3
  +\abs{\pd_t\mathbf b}^2x_n^5}\d X\d t
  &\quad\text{is Carleson}.
 \end{aligned}
\end{equation}
For every $1<p<\infty$, each energy solution of $L_{\mathbf b}h=0$ with
$\Nt(\nabla_Xh),S(\nabla_Xh)\in L^p(\pd\Uc)$ has a boundary trace,
denoted by $\operatorname{Tr}h$, which satisfies
\begin{equation}\label{E.target}
  \big\|\dhalf\operatorname{Tr}h\big\|_{L^p(\pd\Uc)}\ \le\
  C\Big(\big\|\Nt(\nabla_Xh)\big\|_{L^p(\pd\Uc)}
   +\big\|S(\nabla_Xh)\big\|_{L^p(\pd\Uc)}\Big),
\end{equation}
where $C$ depends only on $n$, $p$, $\lambda$, $\Lambda$, $K$,
$\Cnorm{\mu_A}$, $K_{\mathbf b}$ and $\Cnorm{\mu_{\mathbf b}}$.
\end{theorem}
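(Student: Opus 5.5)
The plan is to prove \eqref{E.target} by duality. Fix $g\in C^\infty_c(\R^{n-1}\times\R)$ with $\|g\|_{L^{p'}}\le1$ and estimate $\langle\dhalf\operatorname{Tr}h,g\rangle=\langle\operatorname{Tr}h,\dhalf g\rangle$. Put $\wt g:=\HT\dhalf g$, so that by \eqref{E.algebra}, $\dhalf g=-\HT\wt g$ up to normalisation and $\pd_t$ can be produced from $\dhalf\dhalf$. Extend $g$ and $\wt g$ into $\Uc$ by a parabolic mollification $V:=P_{\gamma x_n}$ applied to them, or by the caloric extension; the choice is made so that $x_n\pd_{x_n}V$ and $x_n^2\pd_tV$ are Littlewood--Paley pieces. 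Their square functions and nontangential maximal functions are then bounded in $L^{p'}$ by $\|g\|_{L^{p'}}$; this uses the $L^{p'}$ boundedness of $\HT$ and of $\dhalf$ composed with LP operators. First I reduce to $h$ with enough decay, using the finiteness of $\Nt(\nabla_Xh)$ and $S(\nabla_Xh)$ and a cutoff at large and small $x_n$. Then I write the boundary pairing as $-\int_0^\infty\pd_{x_n}\!\int h\,\dhalf V\,\d x\d t\,\d x_n$.

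The first integration by parts gives two terms. The term $\iint\pd_{x_n}h\,\dhalf V$ is harmless after moving one half derivative in time onto a Whitney-scale factor of $x_n$: it is bounded by $\iint|\nabla_Xh|\,|x_n^{-1}\cdot(\text{LP piece of }g)|$, and the Carleson duality \eqref{E.carlduality} (its flat version) controls this by $\|\Nt(\nabla_Xh)\|_{L^p}$ times $\|\wt{\mathcal C}(\text{LP piece})\|_{L^{p'}}\lesssim\|g\|_{L^{p'}}$. The term $\iint h\,\pd_{x_n}\dhalf V$ is the delicate one. I rewrite $\pd_{x_n}\dhalf V$, via $\dhalf\dhalf=-\HT\pd_t$ and the extension equation, as $x_n\pd_t(\cdot)$ plus spatial divergence terms. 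Then I integrate by parts in $t$, so that $\pd_th$ appears, and substitute the equation
$$\pd_th=\divg_X(A\nabla_Xh)+\mathbf b\cdot\nabla_Xh .$$
Integrating the divergence by parts once more produces three kinds of terms. The first kind is $\iint A\nabla_Xh\cdot\nabla_X(x_n\cdot)$, with second derivatives of the extension. These are paired as $S(\nabla_Xh)$ against a square function of $g$, or as $\Nt(\nabla_Xh)$ against a Carleson measure. The second kind carries $\nabla_XA$; here the Carleson condition \eqref{E.carlA} and the bound \eqref{E.boundA} let me use Cauchy--Schwarz against $\Nt(\nabla_Xh)$ times the nontangential maximal function of the extension. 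The third kind is the drift term $\iint x_n\,\mathbf b\cdot\nabla_Xh\,(\cdot)$, handled the same way using \eqref{E.ghyp}, since $|\mathbf b|^2x_n\,\d X\d t$ is Carleson.

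I expect the main obstacle to be this second integration by parts, where the time derivative lands on $h$. The powers of $x_n$ must balance exactly, so that every resulting term carries either a square function of $\nabla_Xh$ or a Carleson density paired with $\Nt(\nabla_Xh)$. In addition, the nonlocal operators $\HT$ and $\dhalf$ must be kept on the extension of $g$, never on $h$ or on the coefficients. My first attempt will use the caloric-type extension of $\wt g$ and a Whitney-ball averaging that replaces pointwise $\nabla_Xh$ by its $L^2$ averages, as in \eqref{E.Ntilde}. Any leftover boundary terms at $x_n\to0$ and $x_n\to\infty$ I will dispose of using the a priori finiteness of $\Nt(\nabla_Xh)$ and $S(\nabla_Xh)$ in $L^p$. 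Finally, taking the supremum over $g$ gives \eqref{E.target}.
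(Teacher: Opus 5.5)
The paper does not prove this statement; it quotes it from \cite[Theorem 1.7]{DLPN}. Judged on its own, your sketch has a genuine gap, and it sits in the term you call harmless.

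At height $x_n$ one has $\dhalf P_{\gamma x_n}g=x_n^{-1}\Theta_{x_n}g$ with $\Theta$ a Littlewood--Paley family. So $\iint\pd_{x_n}h\,\dhalf V$ pairs $\nabla_Xh$ with $x_n^{-1}\Theta_{x_n}g$, which is exactly scale-critical. The bound you invoke, $\|\wt{\mathcal C}(\Theta_{x_n}g)\|_{L^{p'}}\lesssim\|g\|_{L^{p'}}$, is false. The functional $\wt{\mathcal C}$ integrates Whitney averages against $\d x_n/x_n$, so it is $L^1$ in the scale variable. It therefore sees $\int_0^\infty|\Theta_sg|\,\d s/s$, a Triebel--Lizorkin $\dot F^0_{p',1}$-type quantity, not the square function (recall $L^{p'}=\dot F^0_{p',2}$). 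A concrete failure: take $g=\chi\sum_{k\le N}\cos(\lambda_kt)$ with $\chi$ a wide bump and sufficiently sparse frequencies $\lambda_k$. Then $\wt{\mathcal C}(\Theta_{x_n}g)\gtrsim N$ on the bulk of the support, while $\|g\|_{L^{p'}}$ is only of order $\sqrt N$ times the same volume factor. Nor can $S(\nabla_Xh)$ be used on this term, since it contains no second derivative of $h$.

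This term is where the real difficulty lies, and your proposal gives no mechanism for it. Your ``delicate'' term is in fact the tractable one. For an even mollifier, $\pd_\lambda P_\lambda=\divg_xR_\lambda+\lambda\pd_tR'_\lambda$ with $R,R'$ Littlewood--Paley families. Splitting this way (twice, for the $\nabla_xh$ term that appears), every spatial derivative moves onto $h$ as $\nabla_x\nabla_Xh$. Every time derivative lands on $h$, and the equation turns it into $\divg(A\nabla_Xh)+\mathbf b\cdot\nabla_Xh$ paired with Littlewood--Paley pieces of $g$. Tent-space duality then bounds these by $S(\nabla_Xh)$, $\Nt(\nabla_Xh)$ and the Carleson conditions. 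For this you must expand $\divg(A\nabla_Xh)$ pointwise rather than integrate it back onto the extension as you propose. Integrating back recreates the critical pairing, and a first derivative of $h$ cannot be ``paired as $S(\nabla_Xh)$'', since $S$ measures $\nabla_X^2h$.

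For $\pd_{x_n}h$ against the $\dhalf$-piece no such move exists. Write the symbol of $x_n\dhalf P_{\gamma x_n}$ as $\zeta'\cdot m(\zeta)+\zeta_tm'(\zeta)$ in the scaled variables $\zeta=(x_n\xi,x_n^2\tau)$; on the time axis this forces $m'(0,\zeta_t)\sim|\zeta_t|^{-1/2}$. A further integration by parts in $x_n$ only trades the term, up to a harmless boundary term, for the paraproduct $\iint\pd_{x_n}^2h\,\Xi_{x_n}g$. Here $\Xi$ is of mollifier type and $\Xi_0$ is a nonzero multiplier, parabolically homogeneous of degree zero. This again pairs a square-function quantity with a maximal-function one. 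This is the point where $\dhalf f$ has to be recovered from the normal derivative. For the heat equation that recovery is the Dirichlet-to-Neumann relation $\pd_{x_n}h|_{x_n=0}=-\sqrt{-\Delta_x+\pd_t}\,f$. For variable $A$ and a drift it needs a genuine argument, which is missing.

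A smaller error concerns the $\nabla_XA$ and drift terms. They cannot be paired with the nontangential maximal function of the extension, because only $|\nabla_XA|^2x_n$ and $|\mathbf b|^2x_n$ are Carleson. They must be paired as $x_n^{1/2}(|\nabla_XA|+|\mathbf b|)|\nabla_Xh|\in T^p_2$ against $x_n^{-1/2}\Theta_{x_n}g\in T^{p'}_2$.
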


\noindent This is an estimate for a single solution; it requires neither
boundary solvability nor small Carleson norms. This matters here because the
Carleson norms produced by Proposition \ref{P.coeff} below depend on $\ell$,
on $d_\nu^{-1}$ and on $\|\dhalf\phi\|_{\BMOpar}$ and need not be small.

\subsection{A square function for time differences}\label{ss.timesquare}

For a measurable function $f$ on $\R^{n-1}\times\R$, define
\begin{equation}\label{E.timesquaredef}
 \mathcal S_t f(y,t):=
 \left(\int_\R\frac{|f(y,t)-f(y,\tau)|^2}{|t-\tau|^2}\,\d\tau\right)^{1/2}.
\end{equation}
We allow this integral to be infinite.

\begin{lemma}\label{L.timesquare}
For every $1<v<\infty$ and $f\in C_c^\infty(\R^{n-1}\times\R)$,
\begin{equation}\label{E.timesquare}
 \|\mathcal S_t f\|_{L^v}\approx_v\|\dhalf f\|_{L^v}.
\end{equation}
\end{lemma}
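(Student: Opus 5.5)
The plan is to reduce \eqref{E.timesquare} to a one-variable statement in $t$ for each fixed $y$, and then integrate in $y$. For fixed $y$, write $g=f(y,\cdot)\in C^\infty_c(\R)$. Then $\mathcal S_tf(y,t)$ is the Marcinkiewicz-type square function
\[
\mathcal S g(t)=\Big(\int_\R \frac{|g(t+h)-g(t)|^2}{h^2}\d h\Big)^{1/2}.
\]
This is the one-dimensional instance of Stein's characterisation of the Bessel/Riesz potential spaces via first differences. For $0<\alpha<1$ and $2n/(n+2\alpha)<v<\infty$ in dimension $n$, one has $\|D^\alpha g\|_{L^v}\approx\|\big(\int|g(\cdot+h)-g|^2|h|^{-n-2\alpha}\d h\big)^{1/2}\|_{L^v}$. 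Here $n=1$ and $\alpha=1/2$, so the threshold is $2/(1+1)=1$, and the full range $1<v<\infty$ is allowed.

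If the statement is to be made self-contained, I would follow Stein's argument as follows.

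\emph{Lower bound $\|\dhalf g\|_{L^v}\lesssim\|\mathcal Sg\|_{L^v}$.} Realise $\dhalf g(t)=c\int (g(t)-g(t+h))|h|^{-3/2}\d h$, which is legitimate for Schwartz $g$. Next, pair $\dhalf g$ with a test function $\psi\in L^{v'}$ and write the pairing through Littlewood--Paley pieces. Equivalently, express $\dhalf g$ as a combination of averages of difference quotients weighted by $|h|^{-1/2}$. Then use Cauchy--Schwarz in $h$, together with the fact that the dual object, a $g$-function of $\psi$ built from $|h|^{-1/2}$-weighted smooth kernels, is bounded in $L^{v'}$. This direction is the easier one and holds for all $1<v<\infty$.

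\emph{Upper bound $\|\mathcal Sg\|_{L^v}\lesssim\|\dhalf g\|_{L^v}$.} Write $g=I_{1/2}F$ with $F=\dhalf g$, where $I_{1/2}$ has kernel $c|s|^{-1/2}$. Then
\[
\frac{g(t+h)-g(t)}{h}=\int F(t-s)K_h(s)\d s,\qquad K_h(s)=c\,\frac{|s+h|^{-1/2}-|s|^{-1/2}}{h}.
\]
Hence $\mathcal Sg(t)=\|T F(t)\|_{L^2(\d h)}$, where $T$ is a singular integral with values in the Hilbert space $L^2(\R,\d h)$. On $L^2$ the bound is Plancherel: $\int|e^{ih\tau}-1|^2h^{-2}\d h=c|\tau|$, which in fact gives the identity $\|\mathcal Sg\|_2=c\|\dhalf g\|_2$. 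It then remains to check the vector-valued Hörmander condition $\int_{|s|>2|r|}\|K_\cdot(s-r)-K_\cdot(s)\|_{L^2(\d h)}\d s\le C$. By homogeneity ($K$ scales like degree $-3/2$ in $(s,h)$, and the $L^2(\d h)$ norm restores degree $-1$), $\|K_\cdot(s)\|_{L^2(\d h)}\approx|s|^{-1}$ and $\|\pd_sK_\cdot(s)\|_{L^2(\d h)}\approx|s|^{-2}$. This gives the condition, modulo care near $s=-h$, where the singularity $|s+h|^{-1/2}$ is square-integrable in $h$ away from $s=0$. Calderón--Zygmund theory for Hilbert-valued kernels then yields boundedness for all $1<v<\infty$, and in particular both directions hold at every exponent.

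Finally, raise the one-dimensional estimate to the $v$-th power and integrate over $y\in\R^{n-1}$, noting that $\dhalf$ acts only in $t$, so $(\dhalf f)(y,\cdot)=\dhalf(f(y,\cdot))$.

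The main obstacle is the upper bound, specifically the verification of the $L^2(\d h)$-valued Hörmander estimate near the moving singularity $s=-h$. The first thing I would try is to split into the regions $|h|<|s|/2$, where one Taylor-expands, and $|h|\ge|s|/2$, where one bounds each term separately and integrates. Should that prove messy, I would instead simply cite Stein, \emph{Singular Integrals}, Ch.~V, \S6.10, for the equivalence.
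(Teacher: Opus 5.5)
Your fallback, quoting Stein's difference characterisation in dimension one at order $\tfrac12$ (where $2n/(n+2\alpha)=1$) and then integrating the $v$-th power in $y$, is exactly the paper's proof. However, you quote Stein's theorem in a homogeneous form that it does not directly give. The theorem characterises the Bessel potential space:
$\|g\|_{L^v}+\|\mathcal S g\|_{L^v}\approx\|g\|_{L^v}+\|\dhalf g\|_{L^v}$.
The paper removes the zeroth-order term by dilation. For $g(\lambda\,\cdot)$, both $\|\mathcal S g\|_{L^v}$ and $\|\dhalf g\|_{L^v}$ acquire the factor $\lambda^{1/2-1/v}$, while $\|g\|_{L^v}$ acquires $\lambda^{-1/v}$. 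After dividing by $\lambda^{1/2-1/v}$, the extra term is $\lambda^{-1/2}\|g\|_{L^v}$, which tends to $0$ as $\lambda\to\infty$. You need this step, or a reference that states the homogeneous version.

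Your self-contained upper bound, on the other hand, genuinely fails, and not for lack of care near $s=-h$. Fix $s\neq0$. For $h$ near $-s$ one has $|K_h(s)|^2\approx|s|^{-2}|s+h|^{-1}$, which is not integrable in $h$. Contrary to your claim, the singularity $|s+h|^{-1/2}$ is \emph{not} square integrable. Several things follow:
\begin{itemize}
\item $\|K_\cdot(s)\|_{L^2(\d h)}=+\infty$ for every $s\neq0$, so the bound $\approx|s|^{-1}$ you obtained by homogeneity is false.
\item The H\"ormander integrand is infinite, because $K_\cdot(s-r)-K_\cdot(s)$ keeps these singularities at the two distinct points $h=-s$ and $h=r-s$.
\item Consequently, $L^2(\d h)$-valued Calder\'on--Zygmund theory does not apply.
\end{itemize}
This is structural rather than technical. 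The same scheme in $\R^n$ at order $\alpha$ would give every $1<p<\infty$, whereas $p>2n/(n+2\alpha)$ is necessary. The reason is that for nonzero $f\in C^\infty_c$ the difference square function behaves like $\|f\|_{L^2}|x|^{-n/2-\alpha}$ at infinity. Here this means $\mathcal S g(t)\approx\|g\|_{L^2}|t|^{-1}$, which is why $v>1$ is exactly the borderline. Stein proves this direction differently: he dominates the difference square function pointwise by a Littlewood--Paley $g^*_\lambda$-function of the fractional derivative, and the $L^p$ range of $g^*_\lambda$ is what produces the exponent restriction. You should therefore drop the Calder\'on--Zygmund sketch and rely on the citation together with the dilation step.
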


\begin{proof}
Stein's characterisation by square differences, in dimension one and
of order $1/2$, gives
\[
 \|g\|_{L^v(\R)}+\|\mathcal S_t g\|_{L^v(\R)}
 \approx_v \|g\|_{L^v(\R)}+\|\dhalf g\|_{L^v(\R)},
 \qquad g\in C_c^\infty(\R).
\]
See \cite[Theorem B, (1.7)]{BJM}. Here we suppress the spatial variable
in \eqref{E.timesquaredef}. The dimension-one exponent range includes
every $v>1$. Apply both inequalities to $g(\lambda\,\cdot)$ and divide
by $\lambda^{1/2-1/v}$. The zeroth-order term becomes
$\lambda^{-1/2}\|g\|_{L^v}$ and tends to zero as $\lambda\to\infty$.
This proves the homogeneous equivalence on $\R$. Apply it at each fixed
$y$ and integrate its $v$-th power in $y$ to obtain \eqref{E.timesquare}.
\end{proof}

\subsection{The atomic endpoint}\label{ss.atoms}

\begin{definition}[{\cite[Definition 6.1]{DN}}]\label{D.atom}
Let $1<\kappa\le\infty$. A function $b$ on $\R^{n-1}\times\R$ is a homogeneous
$(1,\tfrac12,\kappa)$-atom associated with a parabolic ball $B$ of radius $r$ if
\[
  \supp b\subset B,\qquad \|b\|_{L^1}\le r,
\]
and, when $\kappa<\infty$,
\begin{equation}\label{E.atom}
  \|\nabla_yb\|_{L^\kappa}+\big\|\dhalf b\big\|_{L^\kappa}\le\abs B^{-1/\kappa'} .
\end{equation}
When $\kappa=\infty$, \eqref{E.atom} is replaced by
\[
 \|\nabla_yb\|_{L^\infty}+\|\dhalf b\|_{\BMOpar}\le\abs B^{-1}.
\]
The $L^1$ condition follows from the support and gradient conditions by
Poincar\'e's inequality, but we retain it because it is part of the cited
definition. The space
$\dot{\mathcal{HS}}^{1,(\kappa)}_{1,1/2}(\R^{n-1}\times\R)$ consists of the
locally integrable functions $f$ for which
$f=\sum_i\lambda_ib_i$ in the homogeneous Sobolev space with seminorm
$\|\nabla_y f\|_{L^1}+\|\dhalf f\|_{L^1}$, as in \cite[\S2.2]{DN},
with $b_i$ atoms and $\sum_i\abs{\lambda_i}<\infty$, normed by the infimum of
$\sum_i\abs{\lambda_i}$.
\end{definition}

\begin{theorem}[{\cite[Propositions 6.45 and 6.46]{DN}}]\label{T.interp}
The space of Definition \ref{D.atom} does not depend on $\kappa\in(1,\infty]$ and
is written $\HS$. For $1<p<q<\infty$ and $\theta\in(0,1)$ with
$\frac1p=(1-\theta)+\frac\theta q$,
\begin{equation}\label{E.interp}
  \Big(\HS,\ \dot L^q_{1,1/2}\Big)_{\theta,p}\ =\ \dot L^p_{1,1/2} ,
\end{equation}
with equivalent norms.
\end{theorem}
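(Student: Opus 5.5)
The plan is to reduce everything to the parabolic Hardy space $H^1_{\mathrm{par}}(\R^{n-1}\times\R)$ on the space of homogeneous type $(\R^n,\|\cdot\|,\d z)$. The bridge is the lifting operator $\Dpar$. The central claim I would aim for is
\[
 \HS=\set{f:\ \Dpar f\in H^1_{\mathrm{par}}}\quad\text{modulo constants},\qquad
 \|f\|_{\HS}\approx\|\Dpar f\|_{H^1_{\mathrm{par}}},
\]
with the atomic norm taken for any fixed $\kappa\in(1,\infty]$. Once this holds for every $\kappa$, the right-hand side no longer involves $\kappa$, which gives the first assertion. The equivalent formulation through $\nabla_y f$ and $\dhalf f$ uses the $H^1$ boundedness of the parabolic Riesz transforms $\nabla_y\Dpar^{-1}$ and $\dhalf\Dpar^{-1}$. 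These are Calder\'on--Zygmund operators that are homogeneous for the parabolic dilations, which also gives the $L^p$ version \eqref{E.norms}.

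\emph{Step 1: atoms lift into $H^1$.} Let $b$ be a $(1,\tfrac12,\kappa)$-atom on a ball $B$ of radius $r$. I would show that $\Dpar b$ is a parabolic $H^1$ molecule with norm bounded uniformly in the atom.
\begin{itemize}
\item On $2B$ the bound comes from H\"older and \eqref{E.atom}: $\|\Dpar b\|_{L^1(2B)}\lesssim|B|^{1/\kappa'}\|\Dpar b\|_{L^\kappa}\lesssim1$.
\item Away from $B$ I would write $\Dpar b=K*b$, where the kernel $K$ is homogeneous of degree $-(n+1)-1$. Using $\|b\|_{L^1}\le r$ and the cancellation $\int\Dpar b=0$, this gives $|\Dpar b(z)|\lesssim r\,\|z-z_B\|^{-(n+2)}$ for $\|z-z_B\|\ge2r$.
\end{itemize}
Summing over atoms then gives $\|\Dpar f\|_{H^1}\lesssim\|f\|_{\HS}$.

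\emph{Step 2: the converse, which I expect to be the main obstacle.} Given $\Dpar f\in H^1$, I need a decomposition of $f$ itself into compactly supported atoms, so that the support condition holds for $b$ and not only for $\Dpar b$. Simply applying $\Dpar^{-1}$ to the $H^1$ atoms of $\Dpar f$ destroys compact support, so that route fails. Instead I would run a Calder\'on--Zygmund decomposition on $f$, in the style of the Hardy--Sobolev decompositions of Auscher--Russ--Tchamitchian, adapted to parabolic balls:
\begin{itemize}
\item Take the level sets $O_k=\set{M(\Dpar f)>2^k}$, with $M$ the grand or nontangential maximal function, and a parabolic Whitney covering of each $O_k$ with a partition of unity $\chi_{k,j}$.
\item Define the good parts $g_k:=f-\sum_j(f-c_{k,j})\chi_{k,j}$, where each constant $c_{k,j}$ is an average of $f$ over the Whitney ball.
\item Take the atoms to be $b_{k,j}:=(g_{k+1}-g_k)\chi_{k,j}$, suitably regrouped.
\end{itemize}
The heart of the matter is the bound $\|\nabla_y b\|_{L^\infty}+\|\dhalf b\|_{\BMOpar}\lesssim2^k$. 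The gradient part follows from a Poincar\'e inequality for $f-c_{k,j}$ on Whitney balls in terms of $M(\Dpar f)$. The half-derivative part is the genuinely nonlocal piece: I would control it by splitting $\dhalf(\chi h)$ into $\chi\,\dhalf h$ plus a commutator $[\dhalf,\chi]h$, and bounding the commutator using $\|\dhalf\chi_{k,j}\|_{L^\infty}\lesssim r_{k,j}^{-1}$. This yields $\infty$-atoms, hence $\kappa$-atoms for every $\kappa$, and completes the identification.

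\emph{Step 3: interpolation.} Since $\Dpar$ is an isomorphism from $\HS$ onto $H^1_{\mathrm{par}}$ and from $\dot L^q_{1,1/2}$ onto $L^q$, with the common inverse $\Dpar^{-1}$ on the sum, a retraction argument gives
\[
 \big(\HS,\dot L^q_{1,1/2}\big)_{\theta,p}\cong\Dpar^{-1}\big(H^1_{\mathrm{par}},L^q\big)_{\theta,p}.
\]
For spaces of homogeneous type, the classical Fefferman--Rivi\`ere--Sagher result gives $(H^1_{\mathrm{par}},L^q)_{\theta,p}=L^p$. Applying $\Dpar^{-1}$ then yields \eqref{E.interp}, with equivalent norms.
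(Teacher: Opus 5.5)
Your Step~2 has a genuine gap, and Step~2 carries the whole argument. This paper does not prove the statement; it imports it from \cite{DN}, so your argument must stand on its own. Steps~1 and~3 give at most the embedding of $\HS$ into the lifted Hardy space, and hence $(\HS,\dot L^q_{1,1/2})_{\theta,p}\hookrightarrow\dot L^p_{1,1/2}$. Two things rest on Step~2: the reverse inclusion in \eqref{E.interp}, and the nontrivial half of the $\kappa$-independence (that a $\kappa$-atom with $\kappa$ near $1$ splits into atoms of larger exponent). Your construction does not deliver the key claim there, namely $\|\dhalf b_{k,j}\|_{\BMOpar}\lesssim2^k$ after normalisation.

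The Poincar\'e-type estimates you describe give only local pointwise and oscillation information about the pieces:
\begin{itemize}
\item support in a dilate of the Whitney ball and size $\lesssim2^kr$;
\item $|\nabla_yb_{k,j}|\lesssim2^k$;
\item $|b_{k,j}(y,t)-b_{k,j}(y,s)|\lesssim2^k|t-s|^{1/2}$.
\end{itemize}
These properties do not control the half-time derivative. Take $b$ from Proposition~\ref{P.counter} and a smooth product bump $\eta$. Then $2^kr\,\eta(y/r,t/r^2)\,b(t/r^2)$ has all of them, yet by \eqref{E.bdivlocal} (argued as in that proof) its half-time derivative is not in $L^2_{\loc}$, hence not in $\BMOpar$. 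This is the same $C^{1/2}$ versus $\dhalf\in\BMOpar$ discrepancy that Theorem~\ref{T.counter} exploits.

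The splitting $\dhalf(\chi h)=\chi\,\dhalf h+[\dhalf,\chi]h$ adds no information. Since $\dhalf$ is nonlocal in $t$, each term separately involves $h=g_{k+1}-g_k$ along the whole time line. There you know only $h=0$ on $F_k$ and $|h|\lesssim2^k\operatorname{dist}(\cdot,F_k)$ on $O_k$. Inserting this into $\int_{|t-s|>r^2}|h(y,s)|\,|t-s|^{-3/2}\,\d s$ gives $2^k\log(L/r^2)$, where $L$ is how long the line $\{y\}\times\R$ stays in $O_k$; this is not uniformly bounded. So $\|\dhalf\chi_{k,j}\|_\infty\lesssim r^{-1}$ controls the commutator only if $|h|\lesssim2^kr$ on the entire line. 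Nothing in the construction bounds $\chi_{k,j}\dhalf h$ either. These tails cancel in the sum, and what remains is the local regularity of $\chi h$, which is exactly the $\Lip(1,\tfrac12)$ information shown above to be insufficient.

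Step~1 also needs repair, and the repair points to what Step~2 is missing. The kernel of $\Dpar$ is homogeneous of degree $-(n+2)$ but is not bounded on the parabolic unit sphere. The symbol $(|\xi|^2+|\tau|)^{1/2}$ has a corner along $\set{\tau=0}$, and near the time axis the kernel behaves like $t^{-2}|y|^{2-n}$ when $n\ge3$; compare the kernel $c\,\delta_0(y)\otimes|t|^{-3/2}$ of $\dhalf$ itself. So your bound $|\Dpar b(z)|\lesssim r\|z-z_B\|^{-(n+2)}$ fails near the time axis through $B$, and $\dhalf\Dpar^{-1}$ is not a standard Calder\'on--Zygmund operator.

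Lifting instead with a parabolic norm $\rho$ that is smooth off the origin (defined by $|\xi|^2\rho^{-2}+\tau^2\rho^{-4}=1$) makes your molecule bound correct. The comparison $\|\rho(D)f\|_{L^\kappa}\approx\|\nabla_yf\|_{L^\kappa}+\|\dhalf f\|_{L^\kappa}$ for $1<\kappa<\infty$ then follows from Marcinkiewicz multipliers.

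For Step~2, the missing idea is to build atoms from a reproducing formula with a compactly supported synthesis kernel, rather than by cutting $f$ off. With $P_s$ as in \S\ref{ss.moll}, one has $f=-2\int_0^\infty P_s(\pd_sP_sf)\,\d s$ modulo constants, and $\pd_sP_sf=\Theta_s(\rho(D)f)$ for a Littlewood--Paley family $\Theta_s$. Decompose $\Theta_s(\rho(D)f)$ into Coifman--Meyer--Stein tent-space atoms and resynthesise with $P_s$. Each piece is supported in a fixed dilate of its atom's ball. Its $\dot L^\kappa_{1,1/2}$ norm is bounded by duality through the square function of $s\rho(D)P_s$, so no pointwise control of $\dhalf$ is needed. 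With that identification, your Step~3 goes through.
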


\begin{theorem}[finite-atom interpolation]\label{T.finiteinterp}
Let $1<q\le\kappa<\infty$, and let $\mathfrak T$ be a linearizable sublinear
operator on $C_c^\infty(\R^{n-1}\times\R)+\R$, invariant under the addition
of constants. Here linearizable means that there are a Banach space
$\mathcal B$ and a linear operator $\mathfrak U$ such that
$\mathfrak Tf=\|\mathfrak Uf\|_{\mathcal B}$ pointwise. Assume that
\[
 \|\mathfrak Tf\|_{L^q}\le C_q\|f\|_{\dot L^q_{1,1/2}}
\]
and that
\[
 \|\mathfrak Ta\|_{L^1}\le C_1
\]
for every smooth $(1,\tfrac12,\kappa)$-atom $a$. Then, for every $1<s<q$,
\begin{equation}\label{E.finiteinterp}
 \|\mathfrak Tf\|_{L^s}\le C_s\|f\|_{\dot L^s_{1,1/2}},
 \qquad f\in C_c^\infty(\R^{n-1}\times\R).
\end{equation}
The constant depends only on $n,s,q,\kappa,C_1$ and $C_q$.
\end{theorem}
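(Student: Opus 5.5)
The plan is to combine real interpolation (Theorem \ref{T.interp}) with a finite-atom decomposition, in the style of the finite-atomic arguments of Meda--Sj\"ogren--Vallarino adapted to the parabolic Hardy--Sobolev setting of \cite{DN}. Fix $1<s<q$. Since $\mathfrak T$ is invariant under addition of constants and the seminorms kill constants, we work modulo constants throughout.

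The first step is to show that $\mathfrak T$ extends to a bounded map $\HS\to L^1$, at least on finite sums. For $f\in C_c^\infty$ with $\|\nabla_yf\|_{L^1}+\|\dhalf f\|_{L^1}$ finite and lying in $\HS$, I would produce a decomposition $f=\sum_{i=1}^N\lambda_i a_i+g$. Here $a_i$ are smooth $(1,\tfrac12,\kappa)$-atoms, $\sum|\lambda_i|\lesssim\|f\|_{\HS}$, and the remainder $g$ is smooth and small in $\dot L^q_{1,1/2}$ (or in a norm controlled by the $L^\infty$-type atom condition).

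To obtain this decomposition, I would run the Calder\'on--Zygmund/Whitney decomposition of the level sets of a suitable maximal function of $\Dpar f$. This is the construction behind \cite[Proposition 6.45]{DN}, which gives atoms adapted to $f$. Because $f$ is smooth and compactly supported, only finitely many levels $2^k$ with $k\le k_0$ carry large atoms, and the infinitely many low levels sum to a piece that I would show is a bounded multiple of a single $(1,\tfrac12,\infty)$-atom on a large ball. Smoothness of the atoms is arranged by mollifying at scales much smaller than the Whitney cubes, which preserves the atom conditions up to constants.

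Sublinearity then gives $\|\mathfrak Tf\|_{L^1}\le C\sum|\lambda_i|\le C\|f\|_{\HS}$ on the dense class. The linearization $\mathfrak U$ is what lets us pass to the closure and makes $\mathfrak U$ a bounded linear operator $\HS\to L^1(\mathcal B)$.

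The second step is interpolation. $\mathfrak U$ is bounded $\HS\to L^1(\mathcal B)$ and $\dot L^q_{1,1/2}\to L^q(\mathcal B)$. By \eqref{E.interp} and the real method (vector-valued $L^p$ interpolation, $(L^1(\mathcal B),L^q(\mathcal B))_{\theta,s}=L^s(\mathcal B)$), it is bounded $\dot L^s_{1,1/2}\to L^s(\mathcal B)$. Taking norms in $\mathcal B$ yields \eqref{E.finiteinterp}.

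The main obstacle is the first step: controlling $\mathfrak T$ on genuine infinite atomic sums from its bound on single atoms. This is the well-known failure of "uniform bound on atoms implies boundedness" for infinite decompositions. I would handle it by insisting on the finite decomposition for $f\in C_c^\infty$ with atoms at $\kappa<\infty$. The key technical point is that the collected low-level remainder is a multiple of a $(1,\tfrac12,\kappa)$-atom. This should follow because $f$ has compact support and bounded $\Dpar f$, so the remainder is bounded in $L^\infty$-Sobolev on a ball of radius comparable to $\operatorname{supp} f$, and $\kappa<\infty$ makes this suffice.

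If the truncation argument is delicate for the nonlocal $\dhalf$, note that atoms need not have $\dhalf$ supported in the ball. In that case I would instead use $q\le\kappa$ and a density argument in $\dot L^\kappa_{1,1/2}$, with the $L^q$ bound controlling the error term.
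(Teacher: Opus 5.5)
\textbf{Verdict: there is a genuine gap in Step~1, and your Step~2 has nothing to interpolate without it.}

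Step~1 carries the whole difficulty, and your proposal does not establish it. You need a Meda--Sj\"ogren--Vallarino type theorem for $\HS$: every $f\in C_c^\infty$ is a \emph{finite} combination of smooth $(1,\tfrac12,\kappa)$-atoms with $\sum_i|\lambda_i|\lesssim\|f\|_{\HS}$. This is not among the quoted results; Theorem~\ref{T.interp} gives only the $\kappa$-independence of $\HS$ and the interpolation identity.

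The mechanism you sketch for the key step fails because of the half derivative. A Calder\'on--Zygmund/Whitney decomposition on level sets of a maximal function of the derivatives gives the low-level part $h$ only pointwise bounds, such as $|\nabla_yh|\lesssim2^{k'}$ and $|h(y,t)-h(y,s)|\lesssim2^{k'}|t-s|^{1/2}$. Such $\Lip(1,\tfrac12)$ control implies neither $\dhalf h\in\BMOpar$ nor $\dhalf h\in L^\kappa$. The function $b$ of Proposition~\ref{P.counter} is $C^{1/2}$ with $\dhalf b\notin L^2_{\loc}$. So your claim that $h$ is a bounded multiple of a $(1,\tfrac12,\infty)$- or $(1,\tfrac12,\kappa)$-atom is unsupported.

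The remainder $g$ has a separate problem. Each Whitney decomposition has infinitely many cubes, so a remainder is unavoidable. Smallness of $g$ in $\dot L^q_{1,1/2}$ makes $\|\mathfrak Tg\|_{L^q}$ small, not $\|\mathfrak Tg\|_{L^1}$, and the $L^1$ bound is what $\|\mathfrak Tf\|_{L^1}\le C\sum_i|\lambda_i|$ requires. For the atom hypothesis to absorb $g$, it must itself be a small multiple of a smooth $(1,\tfrac12,\kappa)$-atom. That means it must be supported in a fixed ball and small in $\dot L^\kappa_{1,1/2}$, which needs the atomic series to converge in $\dot L^\kappa_{1,1/2}$. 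Because the $\dhalf$ of the pieces is not localised, this convergence is not automatic. Your fallback, density in $\dot L^\kappa_{1,1/2}$ with the $L^q$ bound controlling the error, has the same $L^q$-versus-$L^1$ defect.

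The missing idea is that no finite decomposition is needed; the paper mollifies at a fixed scale instead.
\begin{itemize}
\item Every $(1,\tfrac12,\kappa)$-atom satisfies $\|\nabla_ya\|_{L^1}+\|\dhalf a\|_{L^1}\le C_\kappa$, including the nonlocal tail of $\dhalf a$.
\item Consequently $S_\epsilon f=\zeta_\epsilon*f$ maps $\HS$ into $\dot L^q_{1,1/2}$ with norm $\lesssim\epsilon^{-(n+1)/q'}$, and it maps each atom to a fixed multiple of a \emph{smooth} atom.
\item For any atomic representation $h=\sum_i\lambda_ia_i$, the mollified partial sums therefore converge in $\dot L^q_{1,1/2}$, where $\mathfrak T$ is continuous.
\item An a.e.\ convergent subsequence, sublinearity and Fatou then give $\|\mathfrak T(S_\epsilon h)\|_{L^1}\le CC_1\|h\|_{\HS}$, uniformly in $\epsilon$.
\end{itemize}
This is exactly how the $L^q$ hypothesis defeats the ``bounded on atoms need not mean bounded on $H^1$'' obstruction you correctly identified.

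The paper then compares the $K$-functional of the nonnegative function $\mathfrak T(S_\epsilon f)$ directly with that of $f$. This avoids Bochner spaces over a possibly nonseparable $\mathcal B$, and avoids checking that two separately defined extensions of $\mathfrak U$ agree on $\HS\cap\dot L^q_{1,1/2}$. Theorem~\ref{T.interp} gives the $L^s$ bound uniformly in $\epsilon$. Finally, $S_\epsilon f\to f$ in $\dot L^q_{1,1/2}$, together with Fatou, removes $\epsilon$.
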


\begin{proof}
We apply Theorem \ref{T.interp} after mollifying the datum. Linearizability
gives $|\mathfrak Tf-\mathfrak Tg|\le\mathfrak T(f-g)$, so the assumed
$L^q$ estimate extends $\mathfrak T$ continuously to
$\dot L^q_{1,1/2}$ by density, preserving sublinearity.
Choose a nonnegative $\zeta\in C_c^\infty(B_1(0))$ of integral one and put
\[
 \zeta_\epsilon(y,t):=\epsilon^{-(n+1)}
       \zeta(y/\epsilon,t/\epsilon^2),\qquad
 S_\epsilon f:=\zeta_\epsilon*f,\qquad \epsilon>0.
\]
\medskip

\noindent\emph{Mollified atoms.} Every $(1,\tfrac12,\kappa)$-atom $a$ satisfies
\begin{equation}\label{E.atomderivativeLone}
 \|\nabla_y a\|_{L^1}+\|\dhalf a\|_{L^1}\le C_\kappa.
\end{equation}
For completeness, let its supporting ball be $B_r(y_0,t_0)$. The spatial
estimate follows by H\"older's inequality. The same inequality controls
$\dhalf a$ on $\{|y-y_0|<r,\ |t-t_0|<4r^2\}$, whose measure is comparable
to $|B_r|$. Outside this set, $\dhalf a$ vanishes when $|y-y_0|\ge r$,
while the half-derivative kernel gives
\[
 \int_{|t-t_0|\ge4r^2}\int_{\R^{n-1}}|\dhalf a(y,t)|\,\d y\d t
 \lesssim r^{-1}\|a\|_{L^1}\le1.
\]
This proves \eqref{E.atomderivativeLone}, including the nonlocal tail.

The function $S_\epsilon a$ is smooth, supported in $B_{r+\epsilon}(y_0,t_0)$,
and has $L^1$ norm at most $r$. Convolution commutes with both derivatives.
If $\epsilon\le r$, their $L^\kappa$ norms are bounded by contraction and
\eqref{E.atom}. If $\epsilon>r$, Young's inequality and
\eqref{E.atomderivativeLone} give
\[
 \|\nabla_y S_\epsilon a\|_{L^\kappa}
 +\|\dhalf S_\epsilon a\|_{L^\kappa}
 \lesssim\|\zeta_\epsilon\|_{L^\kappa}
 \lesssim\epsilon^{-(n+1)/\kappa'}.
\]
In both cases these norms are bounded by
$C|B_{r+\epsilon}|^{-1/\kappa'}$. Thus $S_\epsilon a$ is a fixed multiple
of a smooth $(1,\tfrac12,\kappa)$-atom, uniformly in $a$, $r$ and
$\epsilon$. In particular,
\begin{equation}\label{E.mollifiedatomendpoint}
 \|\mathfrak T(S_\epsilon a)\|_{L^1}\le C C_1.
\end{equation}
\medskip

\noindent\emph{Atomic series at a fixed scale.} By \eqref{E.atomderivativeLone},
Definition \ref{D.atom}, and Young's inequality,
\begin{equation}\label{E.HSsmoothing}
 \|S_\epsilon h\|_{\dot L^q_{1,1/2}}
 \lesssim\epsilon^{-(n+1)/q'}\|h\|_{\HS},\qquad h\in\HS.
\end{equation}
Here we use the fixed finite atom exponent $\kappa$, which gives an
equivalent $\HS$ norm by Theorem \ref{T.interp}. If
$h=\sum_i\lambda_i a_i$ is such an atomic representation, then, for each
fixed $\epsilon>0$,
\[
 S_\epsilon\Big(\sum_{i=1}^N\lambda_i a_i\Big)
 \longrightarrow S_\epsilon h\quad\hbox{in }\dot L^q_{1,1/2}.
\]
Continuity of $\mathfrak T$ in this norm, an almost everywhere convergent
subsequence in $L^q$, sublinearity, and \eqref{E.mollifiedatomendpoint}
therefore give by Fatou's lemma
\begin{equation}\label{E.regularizedendpoint}
 \|\mathfrak T(S_\epsilon h)\|_{L^1}
 \le C C_1\sum_i|\lambda_i|.
\end{equation}
Taking the infimum over the representations bounds the right side by
$C\|h\|_{\HS}$, independently of $\epsilon$.
Also, convolution and \eqref{E.norms} give
\[
 \|\mathfrak T(S_\epsilon g)\|_{L^q}
 \lesssim C_q\|g\|_{\dot L^q_{1,1/2}}.
\]
\medskip

\noindent\emph{Interpolation and the limit.} For a compatible pair of Banach spaces
$(X_0,X_1)$, write
\[
 K(t,f;X_0,X_1):=\inf_{f=f_0+f_1}
       \big(\|f_0\|_{X_0}+t\|f_1\|_{X_1}\big),\qquad t>0.
\]
The regularized operator $\mathfrak T S_\epsilon$ is defined on
$\HS+\dot L^q_{1,1/2}$ by \eqref{E.HSsmoothing}. For every decomposition
$f=h+g$ in this sum, sublinearity gives
$\mathfrak T(S_\epsilon f)\le\mathfrak T(S_\epsilon h)+
\mathfrak T(S_\epsilon g)$. Splitting the nonnegative function on the left
under these two majorants yields
\[
 K(t,\mathfrak T(S_\epsilon f);L^1,L^q)
 \lesssim K(t,f;\HS,\dot L^q_{1,1/2}),
\]
uniformly in $\epsilon$. Let $\theta\in(0,1)$ satisfy
$1/s=1-\theta+\theta/q$. Taking the real interpolation norm and using
$(L^1,L^q)_{\theta,s}=L^s$ and Theorem \ref{T.interp}, we obtain
\[
 \|\mathfrak T(S_\epsilon f)\|_{L^s}
 \lesssim\|f\|_{\dot L^s_{1,1/2}},
\]
with a constant independent of $\epsilon$. Finally, for smooth compactly
supported $f$, $S_\epsilon f\to f$ in $\dot L^q_{1,1/2}$. The $L^q$
continuity of $\mathfrak T$ and Fatou's lemma along an almost everywhere
convergent subsequence prove \eqref{E.finiteinterp}.
\end{proof}

\begin{lemma}[Poincar\'e--Morrey bound for finite atoms]\label{L.atomsup}
Let $\kappa>n+1$. If $a$ is a $(1,\tfrac12,\kappa)$-atom supported in a
parabolic ball $B$ of radius $r$, then
\begin{equation}\label{E.atomsup}
 \|a\|_{L^\infty}\ \lesssim\ r|B|^{-1}\ \approx\ r^{-n}.
\end{equation}
\end{lemma}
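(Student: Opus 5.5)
The approach is a parabolic Morrey--Sobolev embedding at the scale of the supporting ball, combined with the support condition; the $L^1$ condition is not needed. Fix a point $z=(y,t)$ and write $B=B_r(z_0)$ for the ball containing $\supp a$. For points outside $B$ we have $a=0$, so it suffices to bound $|a(z)|$ for $z\in B$. We will estimate $|a(z)-a(w)|$ for a suitable $w\notin B$ with $\|z-w\|\lesssim r$, for which $a(w)=0$.

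The key step is a parabolic Morrey inequality for the homogeneous space $\dot L^\kappa_{1,1/2}(\R^{n-1}\times\R)$ with $\kappa>n+1$, the homogeneous dimension. It reads
\begin{equation*}
 |g(z)-g(w)|\ \lesssim\ \|z-w\|^{1-(n+1)/\kappa}\,\|\Dpar g\|_{L^\kappa}.
\end{equation*}
By \eqref{E.norms}, $\|\Dpar g\|_{L^\kappa}\approx\|\nabla_yg\|_{L^\kappa}+\|\dhalf g\|_{L^\kappa}$.

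To prove this, write $g=I_1(\Dpar g)$, where $I_1$ is the parabolic Riesz potential with symbol $\|(\xi,\tau)\|^{-1}$. Its kernel $K$ is homogeneous of degree $-(n+1)+1=-n$ with respect to the parabolic dilations, and it is smooth away from the origin. Split the difference $K(z-\cdot)-K(w-\cdot)$ at radius $2\rho$, where $\rho=\|z-w\|$.

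In the near region, Hölder's inequality gives a contribution $\lesssim\|\Dpar g\|_{L^\kappa}\bigl(\int_{\|x\|<3\rho}\|x\|^{-n\kappa'}\bigr)^{1/\kappa'}$. This integral is finite precisely when $n\kappa'<n+1$, that is $\kappa>n+1$, and it scales as $\rho^{1-(n+1)/\kappa}$.

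In the far region, the smoothness of $K$ gives $|K(z-x)-K(w-x)|\lesssim\rho\|z-x\|^{-n-1}$. Hölder's inequality then gives the same bound, since $(n+1)\kappa'>n+1$.

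For the smooth compactly supported case this representation is legitimate modulo constants, and differences kill the constant. For a general atom, apply the inequality to the mollifications $S_\epsilon a$ from the proof of Theorem \ref{T.finiteinterp}, whose norms are controlled, and pass to the limit. This also gives that $a$ has a continuous representative.

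It remains to apply the inequality with the atom normalisation \eqref{E.atom}. Take $w$ at parabolic distance between $r$ and $2r$ from $z_0$; then $a(w)=0$ and $\|z-w\|\le3r$. We obtain
\begin{equation*}
 |a(z)|\lesssim r^{1-(n+1)/\kappa}\,|B|^{-1/\kappa'}\approx r^{1-(n+1)/\kappa-(n+1)(1-1/\kappa)}=r^{-n}.
\end{equation*}
This is $r|B|^{-1}$, as claimed.

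The main obstacle is the kernel step: verifying that $I_1$ has a kernel that is homogeneous of degree $-n$ and smooth (or at least Lipschitz, with the gradient decay above) off the origin, given that the symbol $\|(\xi,\tau)\|^{-1}$ is not smooth at $\tau=0$. I would handle this by replacing $\|(\xi,\tau)\|$ with the smooth parabolic norm $\rho(\xi,\tau)$, defined by $|\xi|^2/\rho^2+\tau^2/\rho^4=1$. The two multipliers $\rho/\|\cdot\|$ and $\|\cdot\|/\rho$ are of Marcinkiewicz type, so they are bounded on $L^\kappa$ and the norm is unchanged up to constants. The kernel of $\rho^{-1}$ is then homogeneous and smooth away from $0$ by the standard parabolic Calderón--Zygmund theory.
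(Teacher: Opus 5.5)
Your proposal is correct and follows the paper's proof: you obtain a parabolic Morrey inequality by writing the atom as the parabolic Riesz potential of $\Dpar a$, splitting the kernel at $\|z-w\|$ with H\"older's inequality on the near and far parts, and then evaluate at a point $w\notin B$ with $\|z-w\|\lesssim r$. Replacing $\|(\xi,\tau)\|$ by a smooth parabolic norm, so that the kernel is genuinely smooth off the origin, and the mollification step for general atoms are sound additions that fill in details the paper leaves implicit.
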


\begin{proof}
The parabolic Morrey inequality and \eqref{E.norms} give
\[
 |a(z)-a(w)|\lesssim
 \|z-w\|^{1-(n+1)/\kappa}
 \big(\|\nabla_ya\|_{L^\kappa}+\|\dhalf a\|_{L^\kappa}\big).
\]
Indeed, this follows by writing $a=\Dpar^{-1}\Dpar a$, splitting the
parabolic Riesz kernel at the distance $\|z-w\|$, and applying H\"older's
inequality to the near and far parts. Choose $w\notin B$ with
$\|z-w\|\lesssim r$. Since $a(w)=0$, the atom normalization gives
\[
 |a(z)|\lesssim r^{1-(n+1)/\kappa}|B|^{-1/\kappa'}
 =r|B|^{-1}.
\]
This proves \eqref{E.atomsup}.
\end{proof}

\section{Proof of Theorem \ref{T.main}}\label{S.main}

Throughout this section the constants may depend on $n$, $\ell$,
$d_\nu^{-1}$, $\|\dhalf\phi\|_{\BMOpar}$ and the integrability exponent,
unless a more restricted dependence is stated.

\subsection{The two maps and their composite}\label{ss.maps}

Put $\gamma=c(n)/(1+\ell)$, with $c(n)>0$ small enough that
\begin{equation}\label{E.gamma}
  \mathsf J:=\pd_{x_n}\Big(x_n+P_{\gamma x_n}\phi\Big)
   =1+\gamma\br{\pd_\lambda P_\lambda\phi}\big|_{\lambda=\gamma x_n}
  \ \in\ \big[\tfrac12,\tfrac32\big] ,
\end{equation}
which the bound $\abs{\pd_\lambda P_\lambda\phi}\le C(n)\ell$ of
\S\ref{ss.moll} allows, and let
\begin{equation}\label{E.LM}
  \Pi(x,x_n,t):=\br{x,\ x_n+P_{\gamma x_n}\phi(x,t),\ t} .
\end{equation}
Write $\Pi(X,t)=(\Theta(X,t),t)$. This is the Hofmann--Lewis map
\cite[(1.6)]{HLmem}. It is a bijection of $\Uc$ onto $\om$, bi-Lipschitz
for the parabolic distance with a constant depending only on $n$ and $\ell$, and
$\Pi(x,0,t)=\pi(x,t)$. Let $\Pi_\nu$ be the map \eqref{E.LM} built from
$\phi_\nu$ in the coordinate system of $\nu$. We choose this spatial frame
to be positively oriented, which leaves the boundary norms unchanged.
By \eqref{E.graphbound} we may take
\[
 \ell_\nu:=\frac{\sqrt{1+\ell_0^2}+\ell}{d_\nu},\qquad
 \gamma_\nu:=\frac{c(n)}{1+\ell_\nu}.
\]
The analogue of \eqref{E.gamma} then holds for $\Pi_\nu$, and its
bi-Lipschitz constant is controlled by $n$, $\ell$ and $d_\nu^{-1}$.
Write $\Pi_\nu(X,t)=(\Theta_\nu(X,t),t)$. Thus
$\Pi_\nu(x,0,t)=\pi_\nu(x,t)$. Set
\begin{equation}\label{E.T}
  \widehat T:=\Pi^{-1}\circ\Pi_\nu,\qquad
  \widehat T(X,t)=\br{T(X,t),t},\qquad
  M:=D_XT,\qquad \varrho:=\det M .
\end{equation}

\begin{lemma}\label{L.T}
$\widehat T$ is a bijection of $\Uc$ onto itself which preserves the time
variable. It is
bi-Lipschitz for the parabolic distance with a constant $\mathsf K$ depending
only on $n$, $\ell$ and $d_\nu^{-1}$, and satisfies
\begin{equation}\label{E.Tbdry}
  \br{T(x,0,t),t}=\br{\pi^{-1}\circ\pi_\nu}(x,t),\qquad
  \varrho\approx1,\qquad
  \mathsf K^{-1}\le\frac{(T(X,t))_n}{x_n}\le\mathsf K .
\end{equation}
Moreover $M=\br{J\circ\widehat T}^{-1}J_\nu$, where
$J:=D_X\Theta$ and $J_\nu:=D_X\Theta_\nu$, and
\begin{equation}\label{E.dtT}
  \br{J\circ\widehat T}\br{\pd_tT}
  \ =\ \pd_t\Theta_\nu-\br{\pd_t\Theta}\circ\widehat T .
\end{equation}
\end{lemma}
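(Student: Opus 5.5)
Everything follows from the structure of the two Hofmann--Lewis maps $\Pi$ and $\Pi_\nu$; I would take them as given from \S\ref{ss.maps}. That is, each is a bijection of $\Uc$ onto $\om$ of the form $(X,t)\mapsto(\Theta(X,t),t)$, parabolically bi-Lipschitz with constants depending on $n,\ell$ (respectively $n,\ell,d_\nu^{-1}$ via \eqref{E.graphbound}), with $\Pi(x,0,t)=\pi(x,t)$ and $\Pi_\nu(x,0,t)=\pi_\nu(x,t)$. The first step is then immediate. Both maps preserve time, so $\widehat T=\Pi^{-1}\circ\Pi_\nu$ is a time-preserving bijection of $\Uc$. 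It is bi-Lipschitz with constant $\mathsf K$ equal to the product of the two constants. Evaluating at $x_n=0$ gives $(T(x,0,t),t)=\pi^{-1}(\pi_\nu(x,t))$.

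For the distance comparison in \eqref{E.Tbdry}, I would use that bi-Lipschitz maps of closures sending boundary to boundary distort the distance to the boundary by at most the Lipschitz constant. On $\Uc$ the parabolic distance to $\pd\Uc$ is $x_n$. Since $\Pi$ and $\Pi_\nu$ map $\pd\Uc$ onto $\pom$, we get $\delta(\Pi_\nu(X,t))\approx x_n$ and $\delta(\Pi(Y,t))\approx y_n$. Taking $Y=T(X,t)$, so that $\Pi(Y,t)=\Pi_\nu(X,t)$, yields $(T(X,t))_n\approx x_n$.

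For the Jacobian identities, differentiate $\Theta(T(X,t),t)=\Theta_\nu(X,t)$. In $X$ this gives $(J\circ\widehat T)M=J_\nu$. In $t$ it gives $(J\circ\widehat T)\pd_tT+(\pd_t\Theta)\circ\widehat T=\pd_t\Theta_\nu$, which is \eqref{E.dtT}. Both $\Theta$ and $\Theta_\nu$ are smooth in $\Uc$, since mollification at scale $\gamma x_n>0$ is smooth, so this is legitimate pointwise once $J$ is invertible. Invertibility comes from the triangular structure. $J$ is the identity in the first $n-1$ rows, and its last row is $(\nabla_xP_{\gamma x_n}\phi,\mathsf J)$. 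Hence $\det J=\mathsf J\in[\tfrac12,\tfrac32]$ by \eqref{E.gamma}, and likewise $\det J_\nu\in[\tfrac12,\tfrac32]$. Also $|\nabla_xP_\lambda\phi|\le\ell_0$, so $J$ and $J^{-1}$ are bounded. Therefore $\varrho=\det J_\nu/\det J\circ\widehat T\in[\tfrac13,3]$. The orientation choice of the frame for $\nu$ ensures that we compare determinants of the same sign and do not pick up a reflection.

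The main obstacle, modest here, is verifying that $\Pi$ is a bijection onto $\om$ and is parabolically bi-Lipschitz; I am taking this from \cite{HLmem}. If a check is needed: for fixed $(x,t)$, $x_n\mapsto x_n+P_{\gamma x_n}\phi$ is strictly increasing by \eqref{E.gamma}. It tends to $\phi(x,t)$ as $x_n\to0$ and to $\infty$ as $x_n\to\infty$, so it is a bijection onto $(\phi(x,t),\infty)$. For the parabolic Lipschitz bounds, note $|P_{\gamma x_n}\phi(x,t)-\phi(x,t)|\lesssim\ell x_n$. The $t$-increments are controlled by $\ell|t-s|^{1/2}$ through the $\Lip(1,\tfrac12)$ bound on $\phi$ and the mollifier. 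The inverse bound follows from the lower bound $\mathsf J\ge\tfrac12$ on the vertical derivative.
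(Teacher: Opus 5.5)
Your proposal is correct and follows the same route as the paper: $\Pi$ and $\Pi_\nu$ are time-preserving parabolically bi-Lipschitz bijections of $\Uc$ onto $\om$ taking $\pd\Uc$ onto $\pom$, so their composite $\widehat T$ inherits these properties, and the chain rule applied to $\Theta(T(X,t),t)=\Theta_\nu(X,t)$ gives the formula for $M$ and \eqref{E.dtT}. You fill in details the paper leaves implicit, namely the triangular determinant computation giving $\varrho\in[\tfrac13,3]$ (which relies on the positive orientation of the $\nu$-frame), the distance-to-the-boundary argument for $(T(X,t))_n\approx x_n$, and the check that $\Pi$ is bi-Lipschitz using $\mathsf J\ge\tfrac12$.
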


\begin{proof}
Both $\Pi$ and $\Pi_\nu$ are bi-Lipschitz bijections of $\Uc$ onto $\om$ taking
$\pd\Uc$ onto $\pom$, by \eqref{E.gamma} and \eqref{E.mollbds}, so the same
holds for $\widehat T$, and \eqref{E.Tbdry} follows. The identity for $M$ is the chain
rule, and \eqref{E.dtT} is obtained by differentiating
$\Theta\br{T(X,t),t}=\Theta_\nu(X,t)$ in $t$.
\end{proof}

\noindent The map $\widehat T$ is a bi-Lipschitz extension into $\Uc$ of the transition
map between the two graph parametrisations of $\pom$.

\subsection{The PDE for $w$}\label{ss.pde}

\begin{proposition}\label{P.pde}
Let $u$ be an energy solution of $\pd_tu=\Delta u$ in $\Uc$ and put
$w:=u\circ\widehat T$, with $\widehat T$, $T$, $M$ and $\varrho$ as in
\eqref{E.T}. Then $w$ is a weak solution in $\Uc$ of
\begin{equation}\label{E.Mw}
  -\pd_tw+\divg_X\br{B\nabla_Xw}+\mathbf g\cdot\nabla_Xw\ =\ 0 ,
\end{equation}
where
\begin{equation}\label{E.Bg}
  B:=M^{-1}M^{-\mathsf T},\qquad
  \mathbf g:=\varrho^{-1}B\nabla_X\varrho+M^{-1}\pd_tT .
\end{equation}
\end{proposition}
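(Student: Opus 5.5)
The plan is to verify \eqref{E.Mw} pointwise as a classical identity in the interior of $\Uc$, and then read off the weak formulation. Everything there is smooth. The caloric function $u$ is $C^\infty$ in $\Uc$. For $x_n>0$ the function $P_{\gamma x_n}\phi(x,t)$ is $C^\infty$ in $(x,x_n,t)$, so $\Pi$ and $\Pi_\nu$ are smooth on $\Uc$. Their spatial Jacobians $J$ and $J_\nu$ are lower-triangular up to the last row, with determinants $\mathsf J$ and its $\nu$-analogue, which lie in $[\tfrac12,\tfrac32]$ by \eqref{E.gamma}.

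\emph{Step 1 (regularity of $\widehat T$).} By the inverse function theorem, $\Pi^{-1}$ is smooth on $\om$. Hence $\widehat T=\Pi^{-1}\circ\Pi_\nu$ is a smooth diffeomorphism of $\Uc$, and $M=(J\circ\widehat T)^{-1}J_\nu$ is smooth and invertible. By Lemma \ref{L.T}, $\varrho\approx1$. Consequently $w=u\circ\widehat T$ is smooth, and $B$ and $\mathbf g$ are smooth and locally bounded in $\Uc$. A classical solution of \eqref{E.Mw} with such coefficients is then a weak solution: multiply by a test function and integrate by parts in the divergence term. So it suffices to prove \eqref{E.Mw} pointwise.

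\emph{Step 2 (the time derivative).} Differentiating $w(X,t)=u(T(X,t),t)$ in $t$ gives
$$\pd_tw=(\pd_tu)\circ\widehat T+\big((\nabla u)\circ\widehat T\big)\cdot\pd_tT.$$
Differentiating in $X$ gives $\nabla_Xw=M^{\mathsf T}(\nabla u)\circ\widehat T$, that is, $(\nabla u)\circ\widehat T=M^{-\mathsf T}\nabla_Xw$. Therefore
$$\pd_tw=(\pd_tu)\circ\widehat T+M^{-1}\pd_tT\cdot\nabla_Xw.$$

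\emph{Step 3 (the Laplacian in curvilinear coordinates).} For each fixed $t$, $T(\cdot,t)$ is a diffeomorphism of $\R^n_+$. The Piola identity for the cofactor matrix reads
$$\textstyle\sum_i\pd_{X_i}\big(\varrho\,(M^{-1})_{ij}\big)=0\quad\text{for each }j.$$
It is proved by the usual antisymmetry computation, using that mixed second derivatives of $T$ commute. From it we get the transformation rule for divergences: for a $C^1$ vector field $F$,
$$(\divg_YF)\circ\widehat T=\varrho^{-1}\divg_X\big(\varrho M^{-1}(F\circ\widehat T)\big).$$
Take $F=\nabla u$ and use $(\nabla u)\circ\widehat T=M^{-\mathsf T}\nabla_Xw$. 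This gives
$$(\Delta u)\circ\widehat T=\varrho^{-1}\divg_X(\varrho B\nabla_Xw)=\divg_X(B\nabla_Xw)+\varrho^{-1}B\nabla_X\varrho\cdot\nabla_Xw.$$
In the last step I use that $B$ is symmetric to write $\nabla\varrho\cdot B\nabla w=B\nabla\varrho\cdot\nabla w$.

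\emph{Step 4 (conclusion).} Subtract the identity of Step 3 from that of Step 2 and use $\pd_tu=\Delta u$. The two terms $(\pd_t u)\circ\widehat T$ and $(\Delta u)\circ\widehat T$ cancel, leaving
$$-\pd_tw+\divg_X(B\nabla_Xw)+\big(\varrho^{-1}B\nabla_X\varrho+M^{-1}\pd_tT\big)\cdot\nabla_Xw=0.$$
This is exactly \eqref{E.Mw} with $\mathbf g$ as in \eqref{E.Bg}.

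I expect the main obstacle to be Step 1 together with the Piola identity in Step 3. One must make sure that $\widehat T$ is genuinely $C^2$ in the interior, so that the cofactor divergence vanishes and $\varrho$ can be differentiated. This follows from interior smoothness of the mollified graph, with no quantitative bounds needed. The quantitative control of $B$ and $\mathbf g$ near $\pd\Uc$ is not part of this statement; Lemma \ref{L.HL} takes care of it later. If one prefers to avoid the classical route, an alternative is to change variables directly in the weak formulation of the heat equation with the test function $\psi=\varphi\circ\widehat T^{-1}$, using $\d Y=\varrho\,\d X$. This yields the same equation after absorbing $\varrho$ with the Piola identity.
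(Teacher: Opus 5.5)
Your argument is correct, but it follows a different route from the paper. The paper never passes to a pointwise identity. It inserts the test function $\Psi\circ\widehat T^{-1}$ into the weak formulation of $\pd_tu=\Delta u$, keeps the time derivative on the solution, and changes variables by $T(\cdot,t)$ at each fixed time. The chain-rule formulas of your Step 2, together with $\nabla_Y(\Psi\circ\widehat T^{-1})\circ\widehat T=M^{-\mathsf T}\nabla_X\Psi$ and $\d Y=\varrho\,\d X$, give directly the weighted divergence-form equation $-\varrho\pd_tw+\divg_X(\varrho B\nabla_Xw)+\varrho(M^{-1}\pd_tT)\cdot\nabla_Xw=0$. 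Dividing by $\varrho$, that is, testing with $\Psi/\varrho$, then produces the drift $\varrho^{-1}B\nabla_X\varrho$.

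Your Step 3 is the strong-form counterpart: the Piola identity is what makes the pointwise transformation of $\Delta u$ agree with the integrated one. Your route gives an explicit formula for the transformed operator. The price is that it uses $C^\infty$ regularity of $u$ (hypoellipticity of the heat operator) and $C^2$ regularity of $\widehat T$. Both are available in the interior of $\Uc$, as you note, since $P_{\gamma x_n}\phi$ is smooth for $x_n>0$.

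The paper's route uses only first derivatives of $\widehat T$ in the change of variables. Second derivatives enter only through $\nabla_X\varrho$ in the final coefficient. That route would also apply unchanged to a weak solution of a rough divergence-form equation.

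One correction to your closing remark: the weak route does not need the Piola identity. There $\varrho$ appears only as the volume element, and it is removed by dividing, not by differentiating cofactors.
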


\begin{proof}
Let $\Psi\in C^\infty_0(\Uc)$ be a test function for $w$, so that
$\Psi\circ\widehat T^{-1}$ is the corresponding test function for $u$. From
$w=u\circ\widehat T$,
$$\nabla_Xw=M^{\mathsf T}\br{\nabla_Xu}\circ\widehat T,\qquad
  \pd_tw=\br{\pd_tu}\circ\widehat T
  +\br{\pd_tT}\cdot\br{\nabla_Xu}\circ\widehat T .$$
Inserting $\Psi\circ\widehat T^{-1}$ in the weak formulation of
$\pd_tu=\Delta u$, changing
variables by the spatial map $T(\cdot,t)$ at each fixed $t$, and using
$\br{M^{-\mathsf T}a}\cdot\br{M^{-\mathsf T}b}
=\br{M^{-1}M^{-\mathsf T}a}\cdot b$, one gets
$$\int_\R\int_\Uc\Big[\br{\varrho B\nabla_Xw}\cdot\nabla_X\Psi
  +\varrho\br{\pd_tw}\Psi
  -\varrho\br{M^{-1}\pd_tT}\cdot\nabla_Xw\,\Psi\Big]\d X\d t=0 ,$$
that is,
$$-\varrho\pd_tw+\divg_X\br{\varrho B\nabla_Xw}
  +\varrho\br{M^{-1}\pd_tT}\cdot\nabla_Xw\ =\ 0 .$$
Dividing by $\varrho$ and using
$$\varrho^{-1}\divg_X\br{\varrho B\nabla_Xw}
  =\divg_X\br{B\nabla_Xw}+\varrho^{-1}\br{B\nabla_X\varrho}\cdot\nabla_Xw$$
gives \eqref{E.Mw} with \eqref{E.Bg}.
\end{proof}

\subsection{The coefficients of the PDE for $w$}\label{ss.coeff}

\begin{proposition}\label{P.coeff}
With constants depending only on $n$, $\ell$, $d_\nu^{-1}$ and
$\|\dhalf\phi\|_{\BMOpar}$, the matrix $B$ is uniformly elliptic,
\begin{equation}\label{E.ptwise}
  x_n\br{\abs{\nabla_XM}+\abs{\nabla_XB}+\abs{\mathbf g}}
  +x_n^2\br{\abs{\pd_tB}+\abs{\nabla_X\mathbf g}}
  +x_n^3\abs{\pd_t\mathbf g}\ \lesssim\ 1 ,
\end{equation}
and
\begin{equation}\label{E.carl}
  \abs{\nabla_XM}^2x_n+\abs{\nabla_XB}^2x_n
  +\abs{\pd_tB}^2x_n^3+\abs{\mathbf g}^2x_n
  +\abs{\nabla_X\mathbf g}^2x_n^3
  +\abs{\pd_t\mathbf g}^2x_n^5
\end{equation}
is a Carleson density on $\Uc$, as is its supremum over the Whitney ball
$B_{x_n/2}(X,t)$.
\end{proposition}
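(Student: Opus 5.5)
The plan is to reduce every bound on $B$ and $\mathbf g$ to bounds on the derivatives of the two Hofmann--Lewis maps $\Theta$ and $\Theta_\nu$. By Lemma \ref{L.HL}, applied to $\phi$ directly and to $\phi_\nu$ through Corollary \ref{C.BHMN}, those bounds have the form \eqref{E.mollbds}--\eqref{E.mollcarl}.

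First we record the structure of $J=D_X\Theta$. It is the identity plus a last row $(\nabla_xP_{\gamma x_n}\phi,\,\mathsf J-1)$. By \eqref{E.gamma}, $\det J=\mathsf J\in[\tfrac12,\tfrac32]$, and $|J|\lesssim1$ because $|\nabla_xP_\lambda\phi|\le\ell_0$. So $J$ and $J^{-1}$ are bounded, and the same holds for $J_\nu$. Hence $M=(J\circ\widehat T)^{-1}J_\nu$ and $M^{-1}$ are bounded, $\varrho\approx1$, and $B=M^{-1}M^{-\mathsf T}$ is uniformly elliptic.

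For the pointwise bounds \eqref{E.ptwise} we differentiate. Each entry of $\nabla_XJ$ is a derivative $\pd^lP_{\gamma x_n}\phi$ with $l=2$, $\theta=0$, so it is bounded by $x_n^{-1}$. Each entry of $\pd_tJ$ has $l=2$, $\theta=1$, so it is bounded by $x_n^{-2}$. Similarly $\pd_t\Theta$ has $l=1$, $\theta=1$ and is bounded by $x_n^{-1}$, and $\pd_t^2\Theta$ and $\nabla_X\pd_t\Theta$ are bounded by $x_n^{-3}$ and $x_n^{-2}$ respectively.

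The composition with $\widehat T$ is harmless for three reasons:
\begin{itemize}
\item $T_n\approx x_n$ by \eqref{E.Tbdry};
\item $D_XT=M$ is bounded;
\item $\pd_tT=(J\circ\widehat T)^{-1}(\pd_t\Theta_\nu-(\pd_t\Theta)\circ\widehat T)$ by \eqref{E.dtT}, so $|\pd_tT|\lesssim x_n^{-1}$.
\end{itemize}
The chain rule then gives $|\nabla_X(J\circ\widehat T)|\lesssim x_n^{-1}$ and $|\pd_t(J\circ\widehat T)|\lesssim x_n^{-2}+x_n^{-1}\cdot x_n^{-1}$. This yields $|\nabla_XM|,|\nabla_XB|,|\nabla_X\varrho|\lesssim x_n^{-1}$ and $|\pd_tB|\lesssim x_n^{-2}$. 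Next, $\mathbf g=\varrho^{-1}B\nabla_X\varrho+M^{-1}\pd_tT$ is $O(x_n^{-1})$. One more derivative costs $x_n^{-1}$ in space and $x_n^{-2}$ in time, which gives the remaining terms of \eqref{E.ptwise}. The second derivatives of $\varrho$ involve $\nabla^2_XJ$, which has $l=3$ and is $O(x_n^{-2})$; the corresponding time derivatives are handled in the same way.

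For the Carleson estimate \eqref{E.carl}, each term is a finite sum of products. In every product exactly one factor carries the ``excess'' derivative, in the sense of Lemma \ref{L.HL}: some $\pd^lP\phi$ or $\pd^lP\phi_\nu$ with $k+\theta\ge1$ or $|\varphi|\ge2$. All the other factors are bounded by \eqref{E.mollbds}. For example, $|\nabla_XM|^2x_n\lesssim(|\nabla_XJ|^2\circ\widehat T)\,x_n+|\nabla_XJ_\nu|^2x_n$, and $|\nabla_XJ|^2x_n$ is precisely the density \eqref{E.mollcarl} with $l=2$. Products of two first-order bad factors, such as $|\pd_t\Theta|^2$-type terms in $|\mathbf g|^2x_n$, are treated by keeping one factor in the Carleson density and bounding the other pointwise. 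In $|\mathbf g|^2x_n$ the term $|\pd_tT|^2x_n$ comes from $|\pd_t\Theta|^2x_n$, i.e.\ $l=1$, $\theta=1$, exponent $2+2-3=1$, which is Carleson.

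Pull-backs under $\widehat T$ preserve Carleson measures. The reason is that $\widehat T$ is parabolically bi-Lipschitz with Jacobian $\varrho\approx1$ and $T_n\approx x_n$, so it maps Carleson boxes into boxes of comparable size. The Whitney-supremum version follows the same way: the pointwise bounds show that each factor is comparable to its supremum over $B_{x_n/2}$, up to the higher-derivative terms. Alternatively, Lemma \ref{L.HL} is applied with $\gamma$ replaced by slightly different parameters. The standard route is that the sup of $|\pd^lP_{\gamma x_n}\phi|$ over a Whitney ball is controlled by an average of the same quantity over a larger Whitney region. This holds because $P_{\gamma x_n}\phi$ is a smooth average at scale $x_n$, so its derivatives at nearby points are bounded by averages of the corresponding quantities at a comparable scale.

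I expect the main obstacle to be the terms with the highest time derivatives, namely $|\pd_t\mathbf g|^2x_n^5$. These involve $\pd_t^2T$ and hence $\pd_t^2\Theta$ evaluated along $\widehat T$, together with the product $(\pd_tT)\cdot(\nabla\pd_t\Theta)\circ\widehat T$. The time derivative of the composition produces these cross terms, and one must check that every such product still contains one Carleson factor of the correct homogeneity. I would organise this by assigning each derivative its weight ($x_n$ for space, $x_n^2$ for time) and verifying term by term that the total weight matches \eqref{E.mollcarl}.
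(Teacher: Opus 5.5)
Your treatment of ellipticity, of the pointwise bounds \eqref{E.ptwise}, and of the Carleson property of the density \eqref{E.carl} itself is the paper's argument. The entries of $M$, $B$ and $\varrho$ are smooth functions of first-order mollifier derivatives of $\phi$ (at $\widehat T(X,t)$) and of $\phi_\nu$ (at $(X,t)$). In each differentiated product one factor of positive parabolic order is kept in the density of Lemma \ref{L.HL} and the others are bounded pointwise. Finally, $\widehat T$ preserves Carleson densities. (A product need not contain \emph{exactly} one positive-order factor, but keeping one suffices, as you say later.)

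The gap is the last assertion of the proposition: that the supremum of \eqref{E.carl} over $B_{x_n/2}(X,t)$ is also Carleson. This is the form demanded by \eqref{E.carlA} in Theorem \ref{T.SN}, so it cannot be dropped, and neither of your suggestions proves it.

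Your first suggestion fails for the following reason. For a component $H$ of parabolic order $q$, the pointwise bounds give $|\nabla_XH|\lesssim x_n^{-q-1}$ and $|\pd_tH|\lesssim x_n^{-q-2}$. So the oscillation of $H$ over a Whitney ball is $O(x_n^{-q})$, as large as the a priori bound on $H$ itself. Hence the Carleson property of $|H|^2x_n^{2q-1}$ says nothing about $x_n^{2q-1}\sup_{B_{x_n/2}}|H|^2$. Bounding the ``higher-derivative terms'' by their own suprema only reproduces the same problem one order higher.

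Your alternative is that $\sup_{B_{x_n/2}}|\pd^{\,l}P_{\gamma x_n}\phi|$ is controlled by an average of the same quantity over a larger Whitney region. That is a mean-value property of the kind enjoyed by harmonic or caloric functions. Here $\vartheta$ is an arbitrary compactly supported bump and $P_{\gamma x_n}\phi$ satisfies no equation. Being an average at scale $x_n$ bounds its derivatives in terms of $\phi$, which is \eqref{E.mollbds}, not in terms of local averages of those derivatives. Changing $\gamma$ only moves the evaluation point vertically, and Carleson bounds for each fixed $\gamma$ do not control a supremum over $\gamma$.

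The missing step is to replace suprema by averages of derivatives. The same chain-rule bookkeeping, with Lemma \ref{L.HL} at every order, shows that $m_{\alpha,j}:=|\pd_Y^\alpha\pd_s^jH|^2y_n^{2(q+|\alpha|+2j)-1}$ is a Carleson density for every multi-index $\alpha$ and every $j\ge0$. Parabolic rescaling and Sobolev embedding with an integer $N>(n+1)/2$ give
\[
 x_n^{2q-1}\sup_{B_{x_n/2}(X,t)}|H|^2\ \lesssim\ \sum_{|\alpha|+j\le N}\fiint_{B_{3x_n/4}(X,t)}m_{\alpha,j}.
\]
A Fubini argument then shows that such Whitney averages of Carleson densities are again Carleson densities.
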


\noindent Thus \eqref{E.ptwise} and \eqref{E.carl} give all the coefficient
and drift hypotheses of Theorems \ref{T.SN} and \ref{T.half}, with constants
and Carleson norms of the stated dependence.

\begin{proof}
Uniform ellipticity follows from the bounds for $M$ and $M^{-1}$ in
Lemma \ref{L.T}.

Every entry of $J$ or $J_\nu$ is $1$, $0$, a component of the corresponding
mollified spatial gradient, or the corresponding vertical derivative. The
determinants are bounded away from zero by \eqref{E.gamma} and its analogue
for $\Pi_\nu$. Hence every entry
of $M$ and $B$, as well as $\varrho$, is obtained by applying a smooth
function, whose derivatives are bounded on the relevant compact range, to
\begin{equation}\label{E.list}
  P_{\gamma x_n}\br{\nabla_y\phi},\quad
  \br{\pd_\lambda P_\lambda\phi}\big|_{\gamma x_n},\quad
  P_{\gamma_\nu x_n}\br{\nabla_y\phi_\nu},\quad
  \br{\pd_\lambda P_\lambda\phi_\nu}\big|_{\gamma_\nu x_n} ,
\end{equation}
the first two quantities being evaluated at $\widehat T(X,t)$ and the last two
at $(X,t)$. Every entry of \eqref{E.list} is a first-order derivative
$\pd^{\,1}P_{\gamma x_n}\phi$, or the corresponding derivative of
$\phi_\nu$. Hence a derivative $\nabla_X$ of such an entry is an
instance of $\pd^{\,2}$ with $\theta=0$, and a derivative $\pd_t$ of it an
instance of $\pd^{\,2}$ with $\theta=1$. The chain rule and the instances
$(l,\theta)=(2,0)$ and $(2,1)$ of Lemma \ref{L.HL} therefore give
\eqref{E.ptwise} for $M$, $B$ and $\varrho$, and make
$$\abs{\nabla_XM}^2x_n+\abs{\nabla_XB}^2x_n
  +\abs{\pd_tB}^2x_n^3$$
Carleson, the exponents $2l+2\theta-3$ of \eqref{E.mollcarl} being $1$ and $3$
for these two cases. By \eqref{E.Tbdry}, Carleson densities are preserved by
the bi-Lipschitz map $\widehat T$, so either evaluation point in
\eqref{E.list} is admissible.\medskip

For $\mathbf g$, the summand $\varrho^{-1}B\nabla_X\varrho$ is covered by the
previous paragraph. For the summand $M^{-1}\pd_tT$, \eqref{E.dtT} gives
$$\abs{\pd_tT}\ \lesssim\ \abs{\pd_tP_{\gamma_\nu x_n}\phi_\nu}
  +\abs{\pd_tP_{\gamma x_n}\phi}\circ\widehat T ,$$
and both summands are the instance $l=\theta=1$ of Lemma \ref{L.HL}: they are
$\lesssim x_n^{-1}$, and $\abs{\pd_tP_{\gamma x_n}\phi}^2x_n$ is Carleson, the
exponent $2l+2\theta-3$ being $1$. For $\phi_\nu$ this uses
$\|\dhalf\phi_\nu\|_{\BMOpar}<\infty$, which is Corollary \ref{C.BHMN}. This is
the only place where the Lewis--Murray condition is used, and it is used in both
coordinate systems.\medskip

Differentiating \eqref{E.Bg} and \eqref{E.dtT} shows that every summand of
$\nabla_X\mathbf g$ and $\pd_t\mathbf g$ is a product of derivatives of the
entries in \eqref{E.list} of total parabolic order two and three,
respectively. Here a spatial derivative has order one and a time derivative
has order two. If one factor has positive order $q$, its square times
$x_n^{2q-1}$ is Carleson by Lemma \ref{L.HL}. The remaining factors are
bounded pointwise by the reciprocal power of $x_n$. Hence the square of a
product of total order two times $x_n^3$, and of total order three times
$x_n^5$, is Carleson. The pointwise part of Lemma \ref{L.HL} gives the
corresponding assertions in \eqref{E.ptwise}.

Finally, consider a component $H$ of
$\nabla_XM$, $\nabla_XB$, $\pd_tB$, $\mathbf g$,
$\nabla_X\mathbf g$ or $\pd_t\mathbf g$, with respective parabolic order
$q=1,1,2,1,2,3$. A mollifier derivative
$\pd^{\,l}P_{\gamma x_n}a$ has order $l+\theta-1$, so bounded first
spatial derivatives have order zero. The chain-rule argument above gives,
for every multi-index $\alpha$ and integer $j\ge0$, the Carleson density
\[
 m_{\alpha,j}(Y,s):=
 |\pd_Y^\alpha\pd_s^jH(Y,s)|^2y_n^{2(q+|\alpha|+2j)-1}.
\]
In each differentiated product one positive-order mollifier derivative is
kept in its Carleson density, while the other factors are bounded pointwise
with the remaining powers of $y_n$. Composed factors are handled by
$\widehat T$, and Lemma \ref{L.HL} holds at every order.\medskip

\noindent Parabolic rescaling and Sobolev embedding on
$B_{3x_n/4}(X,t)$, with an integer $N>(n+1)/2$, give
\[
 x_n^{2q-1}\sup_{B_{x_n/2}(X,t)}|H|^2
 \lesssim\sum_{|\alpha|+j\le N}
       \fiint_{B_{3x_n/4}(X,t)}m_{\alpha,j}(Y,s)\,\d Y\d s,
\]
because $x_n/4<y_n<7x_n/4$ on the averaging ball.
Whitney averaging preserves Carleson densities. Indeed, for $m\ge0$ put
\[
 \begin{gathered}
 \mathcal Wm(X,t):=\fiint_{B_{3x_n/4}(X,t)}m(Y,s)\,\d Y\d s,\\
 \mathcal T_R(z_0):=\{(x,x_n,t):(x,t)\in B_R(z_0),\ 0<x_n<R\}.
 \end{gathered}
\]
If $(X,t)\in\mathcal T_R(z_0)$ and $(Y,s)\in B_{3x_n/4}(X,t)$, then
$(Y,s)\in\mathcal T_{2R}(z_0)$ and $y_n\approx x_n$. For fixed $(Y,s)$
the eligible centres have volume at most $Cy_n^{n+2}$, comparable to the
averaging denominator. Fubini therefore gives
\[
 \iint_{\mathcal T_R(z_0)}\mathcal Wm
 \lesssim\iint_{\mathcal T_{2R}(z_0)}m
 \lesssim\Cnorm{m\,\d X\d t}\,|B_R|.
\]
Apply this to each $m_{\alpha,j}$ and sum over the components $H$.
Since the height is comparable throughout each Whitney ball, this proves
the supremum assertion for \eqref{E.carl}.
\end{proof}

\subsection{The nontangential bound and the trace}\label{ss.trace}

From now on $F$ is measurable on $\pom$ with $\|F\|_{p,e_n}<\infty$, $u$ is the
caloric extension of $f:=F\circ\pi$ to $\Uc$, and $w:=u\circ\widehat T$, which
solves \eqref{E.Mw} by Proposition \ref{P.pde}.

\begin{proposition}\label{P.Nw}
For every $1<p<\infty$, $w=F\circ\pi_\nu$ on $\pd\Uc$ and
\begin{equation}\label{E.Nw}
  \big\|N(\nabla_Xw)\big\|_{L^p(\R^{n-1}\times\R)}\ \lesssim\ \|F\|_{p,e_n} .
\end{equation}
\end{proposition}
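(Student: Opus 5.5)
The plan is to prove the two assertions of Proposition \ref{P.Nw} separately, using only that $\widehat T$ is a time-preserving bi-Lipschitz homeomorphism of $\overline\Uc$.

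\emph{Boundary values.} By \eqref{E.Tbdry}, $\widehat T(x,0,t)=(\pi^{-1}\circ\pi_\nu)(x,t)$. Proposition \ref{P.flat} gives $N(\nabla_Xu)\in L^p$, so $u$ has a nontangential limit $f$ almost everywhere; it is continuous up to $\pd\Uc$ when $f$ is nice. The map $\widehat T$ sends cones at $z$ into cones of larger aperture at $\widehat T(z)$, because it is bi-Lipschitz and $(T(X,t))_n\approx x_n$. Hence
\[
w(x,0,t)=u\big((\pi^{-1}\circ\pi_\nu)(x,t)\big)=f\circ\pi^{-1}\circ\pi_\nu(x,t)=F\circ\pi_\nu(x,t)
\]
nontangentially almost everywhere. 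Null sets are preserved since $\Lambda=\pi^{-1}\circ\pi_\nu$ is bi-Lipschitz with Jacobian $\approx1$ by \eqref{E.graphjac}.

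\emph{Nontangential bound.} From $\nabla_Xw=M^{\mathsf T}(\nabla_Xu)\circ\widehat T$ and the boundedness of $M$ (Lemma \ref{L.T}), we get $|\nabla_Xw(X,t)|\lesssim|\nabla_Xu(\widehat T(X,t))|$. If $(X,t)\in\Gamma_a(z)$, then $\widehat T(X,t)\in\Gamma_{a'}(\Lambda(z))$ with $a'$ depending on $\mathsf K$. Indeed, $\|\widehat T(X,t)-\widehat T(z,0)\|\le\mathsf K\|(X,t)-(z,0)\|\le\mathsf K(1+a)x_n\le\mathsf K^2(1+a)(T(X,t))_n$. Therefore
\[
N_a(\nabla_Xw)(z)\lesssim N_{a'}(\nabla_Xu)(\Lambda(z)).
\]
Taking $L^p$ norms and changing variables by $\Lambda$, whose Jacobian is $|\det D\lambda_t|\approx1$ slice by slice in $t$ (\eqref{E.graphjac}), gives $\|N(\nabla_Xw)\|_{L^p}\lesssim\|N_{a'}(\nabla_Xu)\|_{L^p}$. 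Changing the aperture costs only a constant in $L^p$. Proposition \ref{P.flat} then bounds this by $\|f\|_{\dot L^p_{1,1/2}}=\|F\|_{p,e_n}$.

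\emph{Main obstacle.} There is no serious obstacle; the only delicate point is the meaning of the trace when $F$ is merely measurable with finite norm. First argue for $f\in C^\infty_c$, where the caloric extension is continuous on $\overline\Uc$ and the identity $w=F\circ\pi_\nu$ holds pointwise. Then pass to general $f$ by density in $\dot L^p_{1,1/2}$, modulo constants. Along the way use the nontangential convergence furnished by $N(\nabla_Xu)\in L^p$, together with the uniform estimate \eqref{E.Nw} for the approximants.
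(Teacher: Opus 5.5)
Your proposal is correct and follows the paper's own argument. You derive the boundary identity from \eqref{E.Tbdry}, bound $|\nabla_Xw|\lesssim|\nabla_Xu|\circ\widehat T$ using the boundedness of $M$, map cones into cones of enlarged aperture by the bi-Lipschitz property, change variables by $\Lambda$ (Jacobian comparable to one), and conclude with Proposition \ref{P.flat}; your explicit cone computation and nontangential reading of the trace just spell out steps the paper states in one line.
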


\begin{proof}
The first assertion follows from
$\br{T(x,0,t),t}=\br{\pi^{-1}\circ\pi_\nu}(x,t)$, which is
\eqref{E.Tbdry}, together with $u=F\circ\pi$ on $\pd\Uc$. Next,
$\nabla_Xw=M^{\mathsf T}(\nabla_Xu)\circ\widehat T$
with $M$ and $M^{-1}$ bounded, so
$\abs{\nabla_Xw}\approx\abs{\nabla_Xu}\circ\widehat T$. Moreover $\widehat T$ is a parabolically
bi-Lipschitz bijection of $\Uc$ fixing $\pd\Uc$
setwise, so it takes nontangential cones into nontangential cones of a
comparable aperture in both directions. Its boundary restriction preserves
Lebesgue measure up to constants. Hence
$N(\nabla_Xw)\approx N(\nabla_Xu)\circ\br{\widehat T|_{\pd\Uc}}$ up to a change of
aperture of nontangential cones, and hence \eqref{E.Nw} follows from
\eqref{E.flat} of Proposition \ref{P.flat}.
\end{proof}

\begin{proposition}\label{P.Sw}
Let $1<p<\infty$, and suppose that
$S(\nabla_Xu)\in L^p(\R^{n-1}\times\R)$. Then
\begin{equation}\label{E.SNpull}
  \big\|S(\nabla_Xw)\big\|_{L^p}
  \ \lesssim\ \big\|N(\nabla_Xw)\big\|_{L^p} .
\end{equation}
\end{proposition}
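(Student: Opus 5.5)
The plan is to work with the caloric preimage $u$ and transfer to $w$ pointwise, instead of applying Theorem \ref{T.SN} to $w$ directly: \eqref{E.Mw} has a drift, and Theorem \ref{T.SN} is stated without one. The identity $\nabla_Xw=M^{\mathsf T}(\nabla_Xu)\circ\widehat T$ gives
\[
 \nabla_X^2w=M^{\mathsf T}\big(\nabla_X^2u\circ\widehat T\big)M+(\nabla_XM)^{\mathsf T}(\nabla_Xu)\circ\widehat T .
\]
We would therefore bound
\[
 |\nabla_X^2w|^2\lesssim|\nabla_X^2u|^2\circ\widehat T+|\nabla_XM|^2\,|\nabla_Xu|^2\circ\widehat T .
\]
Integrate over a cone $\Gamma(q,\tau)$ with weight $x_n^{-n}$. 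In the first term we change variables by $\widehat T$: the spatial Jacobian is $\varrho\approx1$, time is preserved, and $(T(X,t))_n\approx x_n$ by \eqref{E.Tbdry}. Cones are mapped into cones of comparable aperture, as in the proof of Proposition \ref{P.Nw}. This term is therefore bounded by $S_{a'}(\nabla_Xu)^2$ evaluated at $\widehat T|_{\pd\Uc}^{-1}(q,\tau)$.

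For $u$ itself we apply Theorem \ref{T.SN} with $A=I$. Then $\mu_A=0$ and $K=0$, and the hypothesis $S(\nabla_Xu)\in L^p$ is exactly the proviso of that theorem. This gives
\[
 \|S(\nabla_Xu)\|_{L^p}\lesssim\|N(\nabla_Xu)\|_{L^p}.
\]
Since $|\nabla_Xw|\approx|\nabla_Xu|\circ\widehat T$ and the boundary map preserves measure up to constants, we also have $\|N(\nabla_Xu)\|_{L^p}\approx\|N(\nabla_Xw)\|_{L^p}$, again modulo aperture changes.

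For the second term we use the pointwise bound $|\nabla_Xu|\circ\widehat T\lesssim N_{a'}(\nabla_Xw)(q',\tau')$ for every boundary point $(q',\tau')$ whose enlarged cone contains the point. The conical integral is then dominated by
\[
 \iint_{\Gamma(q,\tau)}|\nabla_XM|^2\,|\nabla_Xw|^2\,x_n^{-n}\,\d X\d t .
\]
The density $|\nabla_XM|^2x_n$ is Carleson by \eqref{E.carl} and Proposition \ref{P.coeff}, including its Whitney supremum. The standard Carleson embedding applies: integrate the $p/2$ power, use the tent-space duality, or for $p\ge2$ use the classical estimate $\iint|h|^2\,\d\mu\lesssim\|N h\|_{L^2}^2$ on tents combined with a good-$\lambda$ or atomic argument. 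This gives
\[
 \Big\|\Big(\iint_{\Gamma}|\nabla_XM|^2\,|\nabla_Xw|^2\,x_n^{-n}\Big)^{1/2}\Big\|_{L^p}\lesssim\Cnorm{\mu}^{1/2}\,\|N(\nabla_Xw)\|_{L^p}.
\]
This is the known bound $\|\mathcal A(\mu)^{1/2}N\|$ for the conical square function of a Carleson measure, valid for all $0<p<\infty$. Combining the two terms proves \eqref{E.SNpull}.

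The main obstacle is this Carleson term for $p<2$. There the conical area function of the Carleson measure has to be controlled by $N$ in $L^p$. We would first try the standard stopping-time argument: on the set $\{N(\nabla_Xw)\le\lambda\}$, estimate the $L^2$ norm of the truncated conical integral over a saw-tooth region by $\lambda^2$ times the $\mu$-measure of the region, then sum over dyadic $\lambda$. A secondary, routine issue is the bookkeeping of apertures under $\widehat T$. This is harmless because all the $L^p$ norms are aperture-independent.
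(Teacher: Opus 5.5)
Your proposal is correct and follows the paper's proof essentially step for step. You use the same product-rule split of $\nabla_X^2w$. For the first term you use the change of variables together with Theorem~\ref{T.SN} with $A=I$ applied to the caloric $u$. For the $\nabla_XM$ term you use the conical Carleson embedding, which the paper quotes from Coifman--Meyer--Stein, with the Carleson density $|\nabla_XM|^2x_n$.

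The only difference is cosmetic: you bound $|(\nabla_Xu)\circ\widehat T|\lesssim|\nabla_Xw|$ directly, so the embedding lands on $N(\nabla_Xw)$ at once, whereas the paper passes through $N_{a''}(\nabla_Xu)\circ\Lambda$.

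One small slip: a cone at $(q,\tau)$ is carried into a cone at $\widehat T|_{\pd\Uc}(q,\tau)$, not at $\widehat T|_{\pd\Uc}^{-1}(q,\tau)$. This does not affect the $L^p$ bound, since both boundary maps preserve measure up to constants.
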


\begin{proof}
By \eqref{E.functionals} the square function $S(\nabla_Xw)$ is the conical
integral of $\abs{\nabla_X^2w}^2\delta^{-n}$. Let
$\Lambda:=\widehat T|_{\pd\Uc}$.
Differentiating $\nabla_Xw=M^{\mathsf T}(\nabla_Xu)\circ\widehat T$ in the spatial
variables gives
$$
  \abs{\nabla_X^2w}\ \lesssim\
  \abs{(\nabla_X^2u)\circ\widehat T}
  +\abs{\nabla_XM}\,\abs{(\nabla_Xu)\circ\widehat T} .
$$
The map $\widehat T$ takes cones into cones of a comparable aperture. Moreover,
$$
  \varrho\approx1,\qquad (T(X,t))_n\approx x_n .
$$
Hence the first summand has square function bounded by
$$
  S_{a'}(\nabla_Xu)\circ\Lambda
$$
for a fixed larger aperture $a'$. By Theorem \ref{T.SN} with $A=I$, and by
the change of variables on the boundary,
$$
  \big\|S_{a'}(\nabla_Xu)\circ\Lambda\big\|_{L^p}
  \ \lesssim\ \big\|N(\nabla_Xu)\big\|_{L^p} .
$$

For the second summand let
$$
  \d\mu_M:=\abs{\nabla_XM}^2x_n\,\d X\d t .
$$
This measure is Carleson by Proposition \ref{P.coeff}. Its contribution is
the conical square function
$$
 \left(\iint_{\Gam(q,\tau)}
   \abs{(\nabla_Xu)\circ\widehat T}^2\,
   \frac{\d\mu_M}{x_n^{n+1}}\right)^{1/2}.
$$
The cone distortion gives, for a sufficiently large $a''$,
$$
  \abs{(\nabla_Xu)(T(X,t),t)}\ \le\
  \inf_{(y,s)\in B_{c x_n}(x,t)}
  \left[N_{a''}(\nabla_Xu)\circ\Lambda\right](y,s).
$$
The conical Carleson embedding \cite[Theorem 3]{CMS} therefore bounds the
$L^p$ norm of this square function by
$$
  C\Cnorm{\mu_M}^{1/2}
  \big\|N_{a''}(\nabla_Xu)\circ\Lambda\big\|_{L^p}.
$$
The proof of Proposition \ref{P.Nw}, applied with the changed aperture,
shows that the last norm is comparable to
$\|N(\nabla_Xw)\|_{L^p}$. The same comparison applies to the first
summand, and gives \eqref{E.SNpull}.
\end{proof}

\begin{theorem}\label{T.trace}
Let $1<p<\infty$. Then $F\circ\pi_\nu\in\dot L^p_{1,1/2}(\R^{n-1}\times\R)$ and
\begin{equation}\label{E.trace}
  \|F\|_{p,\nu}\ \le\ C\big\|N(\nabla_Xw)\big\|_{L^p}\ \le\ C\,\|F\|_{p,e_n} .
\end{equation}
\end{theorem}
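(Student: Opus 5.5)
The plan is to apply Theorem \ref{T.half} to $h=w$, regarded as a solution of \eqref{E.Mw} with $A:=B$ and drift $\mathbf b:=\mathbf g$. The spatial part of $\|F\|_{p,\nu}$ will be handled separately, since it is elementary. I will first reduce to nice data by density. Take $F$ with $f=F\circ\pi\in C^\infty_c(\R^{n-1}\times\R)$; such data are dense in $\dot L^p_{1,1/2}$ modulo constants, as in \S\ref{ss.bvp}. The general case will follow by a limiting argument. Approximate $f$ by $f_k$ in $\dot L^p_{1,1/2}$. The corresponding $F_k\circ\pi_\nu=f_k\circ\Lambda$ converge locally uniformly after normalising constants, and the estimate passes to the limit by Fatou and lower semicontinuity of the $\dot L^p_{1,1/2}$ seminorm under distributional convergence.

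\emph{Spatial part.} For the spatial part, I will use $F\circ\pi_\nu=f\circ\Lambda$ with $\Lambda(y,t)=(\lambda_t(y),t)$. For each fixed $t$, the chain rule gives $|\nabla_y(f\circ\Lambda)|\lesssim|\nabla_yf|\circ\Lambda$, because $\lambda_t$ is Lipschitz by Lemma \ref{L.graphgeometry}. The Jacobian bound \eqref{E.graphjac} then gives $\|\nabla_y(F\circ\pi_\nu)\|_{L^p}\lesssim\|\nabla_yf\|_{L^p}\le\|F\|_{p,e_n}$. Alternatively, one can bound $|\nabla_y w(x,0,t)|\le N(\nabla_Xw)(x,t)$ a.e., using the nontangential limits of $\nabla_X w$. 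Either way, this part is routine.

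\emph{Half derivative.} Here I verify the hypotheses of Theorem \ref{T.half}. By Proposition \ref{P.coeff}, $B$ is uniformly elliptic. The pointwise bounds \eqref{E.ptwise} give \eqref{E.boundA} and the first line of \eqref{E.ghyp}. The Carleson statements in \eqref{E.carl}, including the Whitney supremum version, give $\mu_A$ in \eqref{E.carlA} and $\mu_{\mathbf b}$ Carleson. Next, $\Nt(\nabla_Xw)\le N_{a'}(\nabla_Xw)\in L^p$ by Proposition \ref{P.Nw}. For $S(\nabla_Xw)\in L^p$ I use Proposition \ref{P.Sw}. Its hypothesis $S(\nabla_Xu)\in L^p$ needs checking, and this is where the proviso of Theorem \ref{T.SN} matters.

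I expect this finiteness check to be the main obstacle. For smooth compactly supported $f$, the caloric extension $u$ is explicit. It has Gaussian decay of $\nabla^2_Xu$ away from the support and bounds $|\nabla^2_Xu|\lesssim x_n^{-1}$ near the boundary. From these, a direct computation should show $S(\nabla_Xu)\in L^p$, possibly after truncating the cones at height $\varepsilon$ and letting $\varepsilon\to0$. If the direct computation is awkward, I would instead apply Theorem \ref{T.SN} to truncated square functions $S^{\varepsilon}$, which are trivially finite, with constants uniform in $\varepsilon$. With both proviso conditions in hand, the remaining concern is the energy-solution requirement. That should follow from $u$ being an energy solution and from $\widehat T$ being bi-Lipschitz with $\varrho\approx1$.

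Theorem \ref{T.half} then yields $\|\dhalf\operatorname{Tr}w\|_{L^p}\lesssim\|\Nt(\nabla_Xw)\|_{L^p}+\|S(\nabla_Xw)\|_{L^p}\lesssim\|N(\nabla_Xw)\|_{L^p}$, where the last step is \eqref{E.SNpull}. It remains to identify $\operatorname{Tr}w$ with the continuous boundary value $F\circ\pi_\nu$ from Proposition \ref{P.Nw}, which holds since $w$ is continuous up to $\pd\Uc$. Combining with the spatial part gives the first inequality of \eqref{E.trace}, and the second is \eqref{E.Nw}.
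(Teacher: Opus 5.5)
\textbf{Gap: the energy class of $w$.} Your overall plan is the paper's: apply Theorem \ref{T.half} to $w$ with $A=B$ and $\mathbf b=\mathbf g$, take the hypotheses from Proposition \ref{P.coeff}, and take the $N$ and $S$ inputs from Propositions \ref{P.Nw} and \ref{P.Sw}. The gap is the step you wave through at the end, namely that $w$ is an energy solution ``because $\widehat T$ is bi-Lipschitz with $\varrho\approx1$''. The energy class in Theorem \ref{T.half} includes $\dhalf w\in L^2(\Uc)$. Half-time regularity is not preserved by a time-preserving map whose spatial part depends on $t$ merely because the map is parabolically bi-Lipschitz; this is exactly the mechanism behind Proposition \ref{P.counter}. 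In the Gagliardo form you must control $u(T(X,t),t)-u(T(X,s),t)$, which has size $|T(X,t)-T(X,s)|\,|\nabla_Xu|$. Bi-Lipschitz bounds give only $|T(X,t)-T(X,s)|\lesssim|t-s|^{1/2}$, and with that bound the diagonal integral $\int_{|t-s|<1}|T(X,t)-T(X,s)|^2|t-s|^{-2}\d s$ diverges logarithmically at every interior point. The missing idea is the interior time-smoothness of $\widehat T$. By \eqref{E.dtT} and the case $l=\theta=1$ of \eqref{E.mollbds}, one has $|\pd_tT|\lesssim x_n^{-1}$, which gives \eqref{E.Tdisp}. The diagonal integral then becomes $\lesssim1+\log(1/x_n)$, and this weight is integrable against $|\nabla_Xu|^2$ for your reduced data. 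The term $u(T(X,s),t)-u(T(X,s),s)$ contributes $\|\dhalf u\|_{L^2(\Uc)}^2$ after the change of variables $X\mapsto T(X,s)$. The range $|t-s|\ge1$ is controlled by $\|u\|_{L^2(\Uc)}$. Only after this check can Theorem \ref{T.half} be invoked.

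\textbf{Lesser issues.} First, for $S(\nabla_Xu)\in L^p$, the bound $|\nabla_X^2u|\lesssim x_n^{-1}$ you quote is not enough. Near the boundary the conical integral would be $\int_0x_n^{-2}\,x_n^{n+1}\,x_n^{-n}\d x_n=\infty$. For $f\in C_c^\infty$ the true picture is better, since $\wh u=\wh f\,e^{-x_n\sqrt{|\xi|^2+i\tau}}$ with $\wh f$ Schwartz. Hence every derivative of $u$ is bounded up to $\pd\Uc$. Away from the support the decay is Gaussian in space but only polynomial forward in time, $|\nabla_X^2u|\lesssim\rho^{-n-3}$. This still gives $S(\nabla_Xu)\lesssim\rho^{-n-2}$, which lies in $L^p$ for every $p>1$. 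Your truncation fallback would need a truncated version of Theorem \ref{T.SN}, which is not the quoted statement. The paper avoids these computations by reducing instead to $\wh f\in C_0^\infty(\R^n\setminus\{0\})$. For such data all seminorms of $u(\cdot,x_n,\cdot)$ are $O(e^{-cx_n})$, so the energy class and $S(\nabla_Xu)\in L^p$ follow from the Fourier representation. Second, for general $F$ with $\|F\|_{p,e_n}<\infty$ the pull-back need not be continuous. In your limiting step, $f_k\circ\Lambda\to f\circ\Lambda$ should therefore be convergence in $L^p_{\loc}$, obtained from the parabolic Poincar\'e inequality after normalising constants, rather than local uniform convergence. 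That is all your lower-semicontinuity argument needs.
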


\begin{proof}
The second inequality is Proposition \ref{P.Nw}. For the first one we bound
separately the two halves of the right-hand side of \eqref{E.norms}, applied to
$F\circ\pi_\nu$.

\noindent We may assume that $f:=F\circ\pi$ has
$\wh f\in C^\infty_0(\R^n\setminus\set0)$. Such functions are dense in
$\dot L^p_{1,1/2}$. Indeed, choose $f_j$ of this form with
$f_j\to f$ in the homogeneous norm. Since the space is homogeneous, subtract
constants so that the averages of $f_j$ and $f$ over one fixed parabolic ball
agree. The parabolic Poincar\'e inequality then gives
$f_j\to f$ in $L^p_{\loc}$.

Since
$\Lambda:=\pi^{-1}\circ\pi_\nu$ and its inverse are bi-Lipschitz and preserve
Lebesgue measure up to constants, $f_j\circ\Lambda\to f\circ\Lambda$ in
$L^p_{\loc}$ and hence in $\mathcal D'$. The added constants do not affect any
quantity in \eqref{E.trace}. Thus \eqref{E.trace} for the $f_j$ passes to the
limit by weak lower semicontinuity of the two $L^p$ norms on its left. This is
the only role the reduced class plays: the estimate is proved on it and
extended by continuity.\medskip

\noindent Below we use the frequency-localised representative before this
normalisation. Replacing it by the normalised representative changes its
caloric extension, and hence $w$, only by the same constant, so all derivatives,
differences and estimates below are unchanged.

\noindent The reduction puts $w$ in the energy class required by Theorem
\ref{T.half}. For such an $f$ the caloric
extension is an energy solution in the sense of \cite[\S2]{DLPN}, since
$\wh u=\wh f\,e^{-x_n\sqrt{\abs\xi^2+i\tau}}$ with $\wh f$ supported in
$c\le\|(\xi,\tau)\|\le C$, so that $u(\cdot,x_n,\cdot)$ is Schwartz with every
seminorm $O(e^{-cx_n})$.

\noindent The same is true of $w$. Its spatial energy is controlled by
$\varrho\approx1$ and
$\abs{\nabla_Xw}\approx\abs{\nabla_Xu}\circ\widehat T$.
For the half-time derivative we use the Gagliardo form of the norm and split
$$w(X,t)-w(X,s)=\Big[u\br{T(X,t),t}-u\br{T(X,s),t}\Big]
  +\Big[u\br{T(X,s),t}-u\br{T(X,s),s}\Big] .$$
The second bracket contributes $\|\dhalf u\|^2_{L^2(\Uc)}$, by the change of
variables $X\mapsto T(X,s)$ at each fixed $s$. For the first, Lemma \ref{L.T}
and \eqref{E.mollbds} give
\begin{equation}\label{E.Tdisp}
  \abs{T(X,t)-T(X,s)}\ \lesssim\
  \min\set{\abs{t-s}/x_n,\ \abs{t-s}^{1/2}} ,
\end{equation}
and the square of \eqref{E.Tdisp} divided by $\abs{t-s}^2$ integrates over
$\abs{t-s}\le1$ to $\lesssim1+\log(1/x_n)$. For $\abs{t-s}\ge1$ we bound the
difference by $\abs u$ itself. The resulting terms are absorbed by the decay
of $u$. Hence $\dhalf w\in L^2(\Uc)$; the transported term contributes one
logarithm.\medskip

\noindent\emph{Step 1: the spatial gradient.} For each $x_n>0$ the function
$\nabla_yw(\cdot,x_n,\cdot)$ is bounded in $L^p(\R^{n-1}\times\R)$ by
$\|N(\nabla_Xw)\|_{L^p}$, and $w(\cdot,x_n,\cdot)\to F\circ\pi_\nu$ in
$\mathcal D'(\R^{n-1}\times\R)$ as $x_n\to0$. Hence
$\nabla_y(F\circ\pi_\nu)$ is the weak limit of a bounded family, and the $L^p$
norm being weakly lower semicontinuous,
$$\big\|\nabla_y\br{F\circ\pi_\nu}\big\|_{L^p}\ \le\
  \big\|N(\nabla_Xw)\big\|_{L^p} .$$
No PDE is used here.\medskip

\noindent\emph{Step 2: the square function.} For the reduced class, the
Fourier representation of $u$ and the support of $\wh f$ imply
$S(\nabla_Xu)\in L^p$. Proposition \ref{P.Sw} therefore gives
\begin{equation}\label{E.SN}
  \big\|S(\nabla_Xw)\big\|_{L^p}\ \lesssim\ \big\|N(\nabla_Xw)\big\|_{L^p} .
\end{equation}

\noindent\emph{Step 3: the half derivative.} Since
$\operatorname{Tr}w=F\circ\pi_\nu$ and $\dhalf$ acts in the time variable,
$$\dhalf\br{F\circ\pi_\nu}=\dhalf\operatorname{Tr}w.$$
Theorem \ref{T.half}, followed by \eqref{E.SN} and $\Nt\le N_{a'}$, gives
$$\big\|\dhalf\br{F\circ\pi_\nu}\big\|_{L^p}\ \lesssim\
  \big\|\Nt(\nabla_Xw)\big\|_{L^p}+\big\|S(\nabla_Xw)\big\|_{L^p}
  \ \lesssim\ \big\|N(\nabla_Xw)\big\|_{L^p} .$$
Adding the estimates of Steps 1 and 3 gives \eqref{E.trace}.
\end{proof}

\begin{corollary}\label{C.frame}
Theorem \ref{T.main} holds for every $1<p<\infty$.
\end{corollary}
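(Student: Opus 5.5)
Theorem \ref{T.trace} already gives, for every $1<p<\infty$ and every admissible $\nu$, the one-sided bound $\|F\|_{p,\nu}\lesssim\|F\|_{p,e_n}$. The constants depend only on $n$, $p$, $\ell$, $d_\nu^{-1}$ and $\|\dhalf\phi\|_{\BMOpar}$. The regularity assertions about $\phi_\nu$ are also already in hand. Lemma \ref{L.graphgeometry}, through \eqref{E.graphbound}, shows that $\phi_\nu$ is $\Lip(1,\tfrac12)$ with constant $\ell_\nu$. Corollary \ref{C.BHMN} gives $\dhalf\phi_\nu\in\BMOpar$ with controlled norm. So the corollary reduces to the reverse inequality $\|F\|_{p,e_n}\lesssim\|F\|_{p,\nu}$. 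I plan to obtain it by symmetry, running the whole argument of this section with the roles of the two coordinate systems exchanged.

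Concretely, I regard $\om$ as the graph domain of $\phi_\nu$ in the frame with vertical direction $\nu$, and I take $e_n$ as the new candidate direction. I must check that $e_n$ is admissible for $\phi_\nu$, with transversality margin $d'$ bounded below in terms of $\ell$ and $d_\nu$. I expect this to follow from the proof of Lemma \ref{L.graphgeometry}. There, every vertical line in the $e_n$ frame meets $\pom$ exactly once, with the strict monotonicity $h(r)-h(r')\ge d_\nu(r-r')$, and the spatial Lipschitz constant of $\phi_\nu$ is at most $\sqrt{1+\ell_0^2}/d_\nu$.

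If that is not enough, I will not rely on the angle criterion at all. The proof of Section \ref{S.main} uses admissibility only through three things: the graph representation exists, the transition maps satisfy the bounds of Lemma \ref{L.graphgeometry}, and the Jacobian bounds \eqref{E.graphjac} hold. All of these are symmetric in $(\pi,\pi_\nu)$, since $\Lambda^{-1}=\pi_\nu^{-1}\circ\pi$ is bi-Lipschitz with $\lambda_t^{-1}$ Lipschitz, and both Jacobian bounds appear in \eqref{E.graphjac}. Either way, the swapped configuration has character controlled by $n$, $\ell$, $d_\nu^{-1}$ and $\|\dhalf\phi\|_{\BMOpar}$. This is the step I expect to need the most care, because the constants must not degenerate.

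With the roles swapped, I set $\widehat T':=\Pi_\nu^{-1}\circ\Pi=\widehat T^{-1}$. Then Lemma \ref{L.T}, Propositions \ref{P.pde}, \ref{P.coeff}, \ref{P.Nw} and \ref{P.Sw}, and Theorem \ref{T.trace} apply verbatim. They use the Lewis--Murray condition in both frames, which holds here by Corollary \ref{C.BHMN} and by hypothesis. Their output is $\|F\|_{p,e_n}\lesssim\|F\|_{p,\nu}$ whenever the right side is finite. If $\|F\|_{p,\nu}=\infty$, the inequality is trivial.

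Combining the two directions gives \eqref{E.main}. Finally, for two admissible directions $\nu_1,\nu_2$, each norm is comparable to $\|F\|_{p,e_n}$, so they are comparable to each other. This justifies Definition \ref{D.bdyspace}.
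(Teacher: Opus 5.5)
Your proposal is correct and is essentially the paper's proof. Theorem \ref{T.trace} gives $\|F\|_{p,\nu}\lesssim\|F\|_{p,e_n}$. The reverse bound comes from rerunning Section \ref{S.main} with $\widehat T^{-1}=\Pi_\nu^{-1}\circ\Pi$, using only data that are symmetric in the two frames: both graph functions are $\Lip(1,\tfrac12)$ by Lemma \ref{L.graphgeometry}, both have half-derivative in $\BMOpar$ by Corollary \ref{C.BHMN}, and the bounds \eqref{E.graphjac} are two-sided.

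Rely only on your fallback, because your first route can fail. For $\phi\equiv0$ the function $\phi_\nu$ has spatial slope $\tan\theta_\nu$, so $\cos\theta_\nu-\tan\theta_\nu\sin\theta_\nu=\cos(2\theta_\nu)/\cos\theta_\nu\le0$ when $\theta_\nu\ge\pi/4$. Then $e_n$ is not admissible for $\phi_\nu$ in the angle sense, which is why the paper remarks that no new angle criterion is needed.
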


\begin{proof}
Theorem \ref{T.trace} is the inequality $\|F\|_{p,\nu}\lesssim\|F\|_{p,e_n}$.
Of the two coordinate systems, the construction above uses only that $\pom$ is
the graph of $\phi$ in the direction $e_n$ and the graph of $\phi_\nu$ in the
direction $\nu$, that both graph functions are $\Lip(1,\tfrac12)$ with
characters controlled by $\ell$ and $d_\nu^{-1}$ by Lemma
\ref{L.graphgeometry}, and that both have
half-derivative in $\BMOpar$, which is Corollary \ref{C.BHMN}. These data are
symmetric in the two graph functions. We repeat the argument with
$\widehat T^{-1}=\Pi_\nu^{-1}\circ\Pi$, whose quantitative bounds follow from
the same two maps. Thus no new angle criterion is needed, and we obtain
$\|F\|_{p,e_n}\lesssim\|F\|_{p,\nu}$.
\end{proof}

\section{Coordinate independence fails without the Lewis--Murray condition}
\label{S.counter}

The spatial half of the boundary norm never depends on the coordinate system,
whatever $\phi$ is. We record this first, since it locates the failure in the
half-derivative.

\begin{lemma}[the spatial gradient is frame-free]\label{L.spatial}
Let $\phi$ be $\Lip(1,\tfrac12)$, let $\nu$ be admissible and let
$\Lambda:=\pi^{-1}\circ\pi_\nu$. Then for every $1\le p\le\infty$ and every
measurable $F$ on $\pom$,
\begin{equation}\label{E.spatial}
  \big\|\nabla_y\br{F\circ\pi_\nu}\big\|_{L^p(\R^{n-1}\times\R)}
  \ \approx\ \big\|\nabla_y\br{F\circ\pi}\big\|_{L^p(\R^{n-1}\times\R)} ,
\end{equation}
with constants depending only on $n$, $\ell_0$ and $d_\nu^{-1}$. No
hypothesis on $\dhalf\phi$ is used.
For a measurable pull-back, its gradient norm is understood as $+\infty$
unless its time slices belong to $W^{1,1}_{\loc}(\R^{n-1})$ for almost every
time and its spatial weak gradient belongs to the indicated $L^p$ space.
\end{lemma}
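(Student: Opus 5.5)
The plan is to work slice by slice in time. The key observation is that the transition $\Lambda=\pi^{-1}\circ\pi_\nu$ has the form $\Lambda(y,t)=(\lambda_t(y),t)$, so it does not mix times. Thus $F\circ\pi_\nu=(F\circ\pi)\circ\Lambda$, and on each slice
\[
 (F\circ\pi_\nu)(\cdot,t)=g_t\circ\lambda_t,\qquad g_t:=(F\circ\pi)(\cdot,t).
\]
Only the spatial gradient is involved, so the half-derivative, and with it any hypothesis on $\dhalf\phi$, never enters. By Lemma \ref{L.graphgeometry}, for every fixed $t$ the map $\lambda_t$ is a bi-Lipschitz homeomorphism of $\R^{n-1}$. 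Its Lipschitz constants and those of $\lambda_t^{-1}$ depend only on $\ell_0$ and $d_\nu^{-1}$, and by \eqref{E.graphjac} its Jacobian satisfies $(1+\ell_0)^{-1}\le|\det D\lambda_t|\le d_\nu^{-1}$ almost everywhere.

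First I will invoke the standard chain rule for Sobolev functions under bi-Lipschitz changes of variables. If $g\in W^{1,1}_{\loc}(\R^{n-1})$ and $\lambda$ is bi-Lipschitz, then $g\circ\lambda\in W^{1,1}_{\loc}$ and
\[
 \nabla(g\circ\lambda)=(D\lambda)^{\mathsf T}(\nabla g)\circ\lambda\quad\text{a.e.}
\]
This is proved by approximating $g$ by smooth functions in $W^{1,1}_{\loc}$. The composition is stable because the area formula together with the Jacobian bounds gives $\|h\circ\lambda\|_{L^1(K)}\lesssim\|h\|_{L^1(\lambda(K))}$. In addition, $\lambda$ is differentiable a.e. by Rademacher, and preimages of null sets are null.

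The same statement applied with $\lambda_t^{-1}$ gives the converse. Hence $g_t\in W^{1,1}_{\loc}$ if and only if $g_t\circ\lambda_t\in W^{1,1}_{\loc}$. This shows that the convention "the norm is $+\infty$ unless slices lie in $W^{1,1}_{\loc}$ for a.e. $t$" is the same on both sides. The only care needed is that the exceptional null set of times is preserved, which holds because the time coordinate is unchanged.

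Next I will use the pointwise bound $|\nabla(g_t\circ\lambda_t)|\le\|D\lambda_t\|_{L^\infty}\,|\nabla g_t|\circ\lambda_t$. For $p=\infty$ this finishes directly, since $\lambda_t$ preserves null sets. For $p<\infty$ I integrate over $y$ and change variables $x=\lambda_t(y)$, which gives
\[
 \int|\nabla(g_t\circ\lambda_t)|^p\,\d y\le \|D\lambda_t\|_\infty^p\,\|\det D\lambda_t^{-1}\|_\infty\int|\nabla g_t|^p\,\d x.
\]
Integrating in $t$ then gives one inequality of \eqref{E.spatial}. The reverse inequality is symmetric, using $\lambda_t^{-1}$ and the bounds in \eqref{E.graphjac}. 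The constants depend only on $\ell_0$, $d_\nu^{-1}$ and $n$.

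I expect the main obstacle to be measurability rather than analysis. I must check that the slicewise weak gradients assemble into a jointly measurable function of $(y,t)$, so that Fubini applies. To handle this, I will note that $\Lambda$ is a homeomorphism of $\R^{n-1}\times\R$ that is bi-Lipschitz for the parabolic distance, so composing with it preserves measurability and null sets. The slicewise formula $(D\lambda_t)^{\mathsf T}(\nabla g_t)\circ\lambda_t$ is then measurable, because $D_y\lambda_t(y)$ is a pointwise limit of difference quotients of the continuous map $\Lambda$.
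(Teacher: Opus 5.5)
Your proposal is correct and follows the paper's proof. Both arguments write $F\circ\pi_\nu(\cdot,t)=\big(F\circ\pi(\cdot,t)\big)\circ\lambda_t$ and use the a.e.\ chain rule $\nabla_y(F\circ\pi_\nu)=(D\lambda_t)^{\mathsf T}\big(\nabla_y(F\circ\pi)\big)\circ\lambda_t$ together with the bi-Lipschitz and Jacobian bounds of Lemma \ref{L.graphgeometry}. Both then change variables slice by slice, integrate in $t$, and get the reverse inequality and the extended-value convention by running the argument with $\lambda_t^{-1}$. Your extra remarks on the Sobolev chain rule under bi-Lipschitz maps and on joint measurability via $\Lambda$ supply details the paper leaves implicit.
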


\begin{proof}
The map $\Lambda$ preserves the time variable, and we write
$\Lambda(y,t)=(\lambda_t(y),t)$ as in Lemma \ref{L.graphgeometry}.
That lemma bounds the Lipschitz constants of $\lambda_t$ and its inverse
in terms of $\ell_0$ and $d_\nu^{-1}$, uniformly in $t$.
The Jacobian bounds are \eqref{E.graphjac}.\medskip

Clearly, $F\circ\pi_\nu(\cdot,t)=\br{F\circ\pi(\cdot,t)}\circ\lambda_t$, so that
$$\nabla_y\br{F\circ\pi_\nu}(\cdot,t)
  \ =\ \br{D\lambda_t}^{\mathsf T}
   \Big(\nabla_y\br{F\circ\pi}\Big)\circ\lambda_t
  \qquad\text{almost everywhere},$$
and $D\lambda_t$ and its inverse are bounded, whence
$\abs{\nabla_y(F\circ\pi_\nu)}\approx\abs{\nabla_y(F\circ\pi)}\circ\lambda_t$
pointwise. For $p=\infty$ this is \eqref{E.spatial}. For $p<\infty$, changing
variables by $\lambda_t$ at each fixed $t$ and integrating in $t$ gives
\eqref{E.spatial}, the Jacobian being comparable to $1$.
Applying the same argument to $\lambda_t^{-1}$ shows that finiteness on
either side implies finiteness on the other, which also proves the
extended-value assertion.
\end{proof}

\noindent Theorem \ref{T.counter} is Proposition \ref{P.counter} below
together with Lemma \ref{L.spatial}.

\begin{proposition}[the boundary norm depends on the representation]\label{P.counter}
Assume $n\ge2$. Let
\[
  b(t):=\sum_{k\ge1}2^{-k/2}\sin(2^{k}t) ,
\]
a bounded $2\pi$-periodic function of class $C^{1/2}(\R)$ with
$b\notin H^{1/2}_{\loc}$. Let $\varepsilon>0$ and $\gamma\in(0,\pi/2)$ be arbitrary,
put $\psi(x,t):=\varepsilon b(t)$ and $\om_\psi:=\set{x_n>\psi(x,t)}$, and let
$\nu:=\cos\gamma\,e_n+\sin\gamma\,e_1$. Choose
$F_0\in C_c^\infty(\R)$, $g_0\in C_c^\infty(\R^{n-2})$ and
$\chi\in C_c^\infty(\R)$, none identically zero, with the convention
$g_0\equiv1$ when $n=2$, and let $f$ be the
function on $\pd\om_\psi$ given by
\[
  f\br{x,\psi(x,t),t}=F_0(x_1)\,g_0(x_2,\dots,x_{n-1})\,\chi(t) .
\]
Then $\|f\|_{p,e_n}<\infty$ for every $1<p<\infty$, that is, $f$ lies in
$\dot L^p_{1,1/2}(\pd\om_\psi)$ computed in the coordinate system whose
vertical direction is $e_n$. But $\|f\|_{2,\nu}=\infty$, that is, $f$ does not
lie in $\dot L^2_{1,1/2}(\pd\om_\psi)$ computed in the coordinate system whose
vertical direction is $\nu$.
\end{proposition}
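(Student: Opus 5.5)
The plan is to compute both pull-backs explicitly, since $\psi$ depends only on $t$. In the coordinate system with vertical direction $e_n$ we have $f\circ\pi_{e_n}(x,t)=F_0(x_1)g_0(x_2,\dots,x_{n-1})\chi(t)$. This is in $C_c^\infty(\R^{n-1}\times\R)$, so $\|f\|_{p,e_n}<\infty$ for every $1<p<\infty$ by \eqref{E.norms}. Since $\ell_0=\|\nabla_y\psi\|_\infty=0$, every $\gamma\in(0,\pi/2)$ gives an admissible $\nu$. It also gives $d_\nu=\cos\gamma$.

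For the $\nu$ coordinates I take the isometry $Q$ with $Qe_1=\cos\gamma\,e_1-\sin\gamma\,e_n$ and $Q e_j=e_j$ for $2\le j\le n-1$. A point $Qy+r\nu$ lies on $\pd\om_\psi$ exactly when $-y_1\sin\gamma+r\cos\gamma=\varepsilon b(t)$. Solving for $r$ shows that the $e_1$ coordinate of the boundary point is $y_1/\cos\gamma+\varepsilon\tan\gamma\, b(t)$. Hence
\[
 f\circ\pi_\nu(y,t)=F_0\big(y_1/\cos\gamma+\beta(t)\big)\,g_0(y_2,\dots,y_{n-1})\,\chi(t),\qquad \beta:=\varepsilon\tan\gamma\, b .
\]
The spatial gradient of this function is finite in every $L^p$, either directly or by Lemma \ref{L.spatial}. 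So the whole issue is the half-time derivative.

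In $L^2$, Plancherel gives the exact Gagliardo identity
\[
\|\dhalf h\|_{L^2}^2=c\iiint\frac{|h(y,t)-h(y,s)|^2}{|t-s|^2}\,\d t\,\d s\,\d y .
\]
I will bound the right side from below. The $g_0$ factor integrates out to a positive constant. Substituting $a=y_1/\cos\gamma$, it remains to show that
\[
\int_I\!\!\int_I\!\int_\R\frac{|F_0(a+\beta(t))\chi(t)-F_0(a+\beta(s))\chi(s)|^2}{|t-s|^2}\,\d a\,\d t\,\d s=\infty
\]
for some interval $I$ on which $|\chi|\ge c>0$. I write the difference as $\chi(t)[F_0(a+\beta(t))-F_0(a+\beta(s))]+F_0(a+\beta(s))(\chi(t)-\chi(s))$ and use $|A+B|^2\ge \tfrac12|A|^2-|B|^2$. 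The $B$ term is finite on $I\times I$, because $|\chi(t)-\chi(s)|\le C|t-s|$ and $\|F_0\|_{L^2}<\infty$.

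For the $A$ term I use Plancherel in $a$:
\[
\int|F_0(a+h)-F_0(a)|^2\,\d a=\int|\wh F_0(\xi)|^2|e^{ih\xi}-1|^2\,\d\xi\ \ge\ c\,h^2\qquad\text{for }|h|\le 2\|\beta\|_\infty .
\]
This holds because $F_0\not\equiv0$ and the range of $h$ is bounded. Hence the $A$ term is at least $c\,\varepsilon^2\tan^2\gamma\iint_{I\times I}|b(t)-b(s)|^2|t-s|^{-2}$.

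The main obstacle is to show that this local Gagliardo integral of $b$ diverges on \emph{every} interval $I$. The hypothesis $b\notin H^{1/2}_{\loc}$ gives divergence only on some interval, while $\chi$ is arbitrary and $I$ is chosen inside its nonvanishing set. I would argue via lacunarity. First estimate $b(t)-b(s)$ at scale $|t-s|\approx 2^{-j}$: the high-frequency tail $\sum_{k\geq j}2^{-k/2}\sin(2^kt)$ is a lacunary series, so by Zygmund's lower bound for lacunary series its oscillation on any interval of length $\approx2^{-j}$ contained in $I$ has mean square $\gtrsim\sum_{k\ge j}2^{-k}\approx 2^{-j}$, uniformly in the location of the interval. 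The low-frequency part $\sum_{k<j}2^{-k/2}\sin(2^kt)$ varies by at most $\sum_{k<j}2^{k/2}2^{-j}\lesssim 2^{-j/2}$ across such an interval, which is of the same size. To keep it from cancelling the tail, I restrict to $|t-s|\approx 2^{-j-m}$ for a fixed large $m$: the low part then varies by $\lesssim 2^{-m/2}2^{-(j+m)/2}$, while the tail from level $j+m$ still has oscillation of order $2^{-(j+m)/2}$. Each dyadic scale then contributes $\gtrsim|I|\cdot 2^{j}\cdot 2^{-j}\approx|I|$ to the integral, and summing over $j$ gives $\infty$.

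An alternative is to use periodicity together with the self-similarity $b(2t)=\sqrt2\,(b(t)-\sin 2t)$ to move the divergence from one interval into any other. I would try the lacunary estimate first.

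For completeness, Theorem \ref{T.counter} also asserts $\dhalf\psi\notin\BMOpar$. Formally $\dhalf b=\sum_k \sin(2^kt+\pi/4)$, a lacunary series whose coefficients are not in $\ell^2$, and a lacunary series lies in $\BMO$ only if its coefficients are square summable. So $\dhalf b\notin\BMO$, and hence $\dhalf\psi\notin\BMOpar$ for $\varepsilon>0$.
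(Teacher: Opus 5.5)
Up to the final reduction your argument matches the paper's proof. You use the same pull-back formula \eqref{E.framecx} and the same splitting $\chi(t)(\mathsf T_hF_0-F_0)+(\chi(t)-\chi(s))F_0$ with $|A+B|^2\ge\frac12|A|^2-|B|^2$. You then reduce, as the paper does, to divergence of $\iint_{I\times I}|b(t)-b(s)|^2|t-s|^{-2}$ on an arbitrary interval $I$. Your translation bound over the whole range $|h|\le2\|\beta\|_\infty$ is correct, since $\widehat F_0$ is entire and not identically zero, and it spares you the restriction to $|t-s|<\eta_0$.

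\emph{The gap.} It lies in the step you yourself flag as the main obstacle: the lacunary sketch does not close as written. At separation $|t-s|\approx2^{-(j+m)}$ you control two blocks. The block $k<j$ has increment $\lesssim2^{-m/2}2^{-(j+m)/2}$, and the tail $k\ge j+m$ has oscillation of order $2^{-(j+m)/2}$. The block $j\le k<j+m$ is never accounted for. Its increment can be as large as $\sum_{k<j+m}2^{k/2}2^{-(j+m)}\approx2^{-(j+m)/2}$, the same size as the tail's, so it can cancel the tail exactly as the low part could at $m=0$. Increasing $m$ only relabels the problem. There is a second missing step. Zygmund's estimate bounds the mean square of a lacunary sum over an interval. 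The Gagliardo integral at a dyadic scale instead needs a lower bound for increments at a prescribed separation, and you do not supply that conversion.

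\emph{How to repair it.} Both defects disappear if you apply quasi-orthogonality to the increment itself. For fixed $u$,
\[
t\mapsto b(t)-b(t-u)=\sum_k c_k(u)\cos\big(2^k(t-u/2)\big),\qquad c_k(u):=2^{1-k/2}\sin(2^{k-1}u),
\]
is again a lacunary series in $t$. Discard the finitely many $k<k_1$ with $2^k|I|<C$; they are smooth and change the increment only by $O(|u|)$. For $k\ge k_1$, Schur's test on the matrix $2^{-\max(k,l)}$ bounds the off-diagonal terms of $\int_I|\cdot|^2$ by $C'2^{-k_1}\sum_kc_k(u)^2$. The diagonal is $\approx|I|\sum_kc_k(u)^2$. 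Hence for $u\in[2^{-J},2^{1-J}]$ with $J$ large, on a slightly shrunken $I'\subset I$,
\[
\int_{I'}|b(t)-b(t-u)|^2\,\d t\gtrsim|I|\,2^{-J}.
\]
Each dyadic scale therefore contributes $\gtrsim|I|$, and the sum over $J$ diverges.

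\emph{The paper's route.} Alternatively, take the route you mention second, which is the one the paper uses. Parseval on the torus gives the global divergence \eqref{E.bdiv}. Write $b=b_K+r_K$, where $b_K$ is a trigonometric polynomial and $r_K(t)=2^{-K/2}b(2^Kt)$ has period $2\pi2^{-K}<|I|$. A rescaling of \eqref{E.bdiv} then moves the divergence into $I$ and gives \eqref{E.bdivlocal}, with no lacunary-series input at all. Note that the self-similarity you quote is slightly off: the correct identity is $b(2t)=\sqrt2\,b(t)-\sin2t$, not $\sqrt2\,(b(t)-\sin2t)$.
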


The set $\om_\psi$ lies above a $\Lip(1,\tfrac12)$ graph, and
$\dhalf b\notin\BMO$. That is what the proposition exploits. Indeed $b$ is
periodic with $\sum_m\abs m\abs{\wh b(m)}^2=\infty$, so
$\dhalf b\notin L^2_{\loc}(\R)$, while $\BMO\subset L^2_{\loc}(\R)$.

Observe that in the coordinates of $e_n$ the boundary is flat in space and
varies in time only. Hence its spatial Lipschitz constant is $0$, and every
$\nu$ with $\angle(\nu,e_n)<\pi/2$ is an admissible direction.

\begin{proof}
\emph{The function $b$.} Boundedness and periodicity are immediate. To verify
the $C^{1/2}$ estimate, given $0<|h|\le1$ choose $K$ with
$2^{-K}\le|h|<2^{-K+1}$ and split the sum at $k=K$, using
$|\sin(2^k(t+h))-\sin(2^kt)|\le2^k|h|$ for $k\le K$ and $|\sin|\le1$ for
$k>K$. This gives
$$|b(t+h)-b(t)|\ \lesssim\ |h|2^{K/2}+2^{-K/2}\ \lesssim\ |h|^{1/2} .$$
On the torus $\wh b(\pm2^k)=\mp\frac i22^{-k/2}$ and all other coefficients
vanish, so $\sum_m|m||\wh b(m)|^2=\infty$. We use the sharper statement
that for every $\eta_0>0$
\begin{equation}\label{E.bdiv}
  \int_{|u|\le\eta_0}\int_0^{2\pi}\frac{|b(t)-b(t-u)|^2}{u^2}\d t\d u=\infty .
\end{equation}
Indeed by Parseval, for every $u$,
\[
  \rho(u):=\int_0^{2\pi}|b(t)-b(t-u)|^2\d t
  =8\pi\sum_m|\wh b(m)|^2\sin^2(mu/2) .
\]
The substitution $v=mu/2$ gives
$\int_{|u|\le\eta_0}\sin^2(mu/2)u^{-2}\d u\gtrsim|m|$
for $|m|\ge\pi/\eta_0$. Tonelli's theorem and the divergence of
$\sum_m|m||\wh b(m)|^2$ prove \eqref{E.bdiv}.
In fact, for every nonempty bounded open interval $I$ and every $\eta>0$,
\begin{equation}\label{E.bdivlocal}
 \iint_{\substack{t,s\in I\\0<|t-s|<\eta}}
 \frac{|b(t)-b(s)|^2}{|t-s|^2}\d t\d s=\infty.
\end{equation}
To see this, choose an integer $K\ge1$ so large that $2\pi2^{-K}<|I|$,
and write
\[
 b(t)=b_K(t)+r_K(t),\qquad
 b_K(t):=\sum_{k=1}^K2^{-k/2}\sin(2^kt),\qquad
 r_K(t)=2^{-K/2}b(2^Kt).
\]
Choose a closed interval $J\subset I$ of length $2\pi2^{-K}$ and
$0<\eta_1<\min\{\eta,\operatorname{dist}(J,\R\setminus I)\}$.
By periodicity, rescaling \eqref{E.bdiv} gives
\[
 \int_{|u|<\eta_1}\int_J
 \frac{|r_K(t)-r_K(t-u)|^2}{u^2}\d t\d u=\infty.
\]
Both $t$ and $t-u$ lie in $I$. Since $b_K$ is smooth with bounded
derivative, its corresponding integral is finite. The inequality
$|r_K(t)-r_K(s)|^2\le2|b(t)-b(s)|^2+2|b_K(t)-b_K(s)|^2$
therefore proves \eqref{E.bdivlocal}.\vglue1mm

\emph{The two coordinate systems.} Let
$e_1'=\cos\gamma\,e_1-\sin\gamma\,e_n$ and $e_j'=e_j$ for
$2\le j\le n-1$, so that these vectors span $\nu^\perp$. Writing
$X=\sum_jy_je_j'+y_n\nu$ gives $X\cdot e_n=-y_1\sin\gamma+y_n\cos\gamma$, so
$X\in(\om_\psi)_t$ if and only if
$y_n>\varphi_\nu(y,t):=\varepsilon b(t)/\cos\gamma+y_1\tan\gamma$. Expanding
$X=\sum_jy_je_j'+\varphi_\nu\nu$ in the standard basis gives
$$X\cdot e_1=\frac{y_1}{\cos\gamma}+\varepsilon b(t)\tan\gamma .$$
Hence, writing $F$ and $G$ for
the pull-backs of $f$ in the two coordinate systems and $\varkappa:=\varepsilon\tan\gamma$,
\begin{equation}\label{E.framecx}
  G(y,t)=F\Big(\frac{y_1}{\cos\gamma}+\varkappa b(t),\ y_2,\dots,y_{n-1},\ t\Big),
\end{equation}
a translation along a $C^{1/2}$ path. The norm in the $e_n$ coordinates is
finite for every $1<p<\infty$ because
$F(x,t)=F_0(x_1)g_0(x_2,\dots,x_{n-1})\chi(t)$ is smooth and compactly
supported. Lemma \ref{L.spatial} also gives the finiteness of the spatial
gradient norm in the $\nu$ coordinates.\vglue1mm

\emph{The divergence.} Since $\chi$ is not identically zero, there are a
nonempty bounded open interval $I$ and $c_\chi>0$ such that
$|\chi(t)|\ge c_\chi$ on $I$. Let $\mathsf T_hF_0(z):=F_0(z+h)$.
Since $h^{-1}(\mathsf T_hF_0-F_0)\to F_0'$ in $L^2(\R)$ as $h\to0$ and
$\|F_0'\|_{L^2}>0$, there are $h_0,c_2>0$ such that
\[
 \|\mathsf T_hF_0-F_0\|_{L^2}^2\ge c_2h^2
 \qquad\text{whenever }|h|\le h_0.
\]
For $t,s\in I$ put $h:=\varkappa(b(t)-b(s))$. Equation \eqref{E.framecx}
and the change of variable $z=y_1/\cos\gamma+\varkappa b(s)$ give
\[
 \int_{\R^{n-1}}|G(y,t)-G(y,s)|^2\d y
 =\|g_0\|_{L^2}^2\cos\gamma\,
   \|\chi(t)\mathsf T_hF_0-\chi(s)F_0\|_{L^2}^2,
\]
where $\|g_0\|_{L^2}^2$ is read as $1$ when $n=2$. Decomposing the last
difference as
\[
 \chi(t)(\mathsf T_hF_0-F_0)+(\chi(t)-\chi(s))F_0
\]
and using $\|A+B\|_2^2\ge\tfrac12\|A\|_2^2-\|B\|_2^2$, we obtain
\[
 \int_{\R^{n-1}}|G(y,t)-G(y,s)|^2\d y
 \ \ge\ c\varkappa^2|b(t)-b(s)|^2-C|t-s|^2
\]
whenever $|h|\le h_0$, with $c>0$. Here we used
$|\chi(t)-\chi(s)|\le\|\chi'\|_\infty|t-s|$.
The $C^{1/2}$ bound for $b$ gives $\eta_0>0$ such that $|h|\le h_0$
if $|t-s|<\eta_0$.

Divide the last estimate by $|t-s|^2$ and integrate over
$t,s\in I$ with $\delta<|t-s|<\eta_0$. The error is bounded by
$C|I|^2$, independently of $\delta>0$, whereas \eqref{E.bdivlocal} makes
the positive term diverge as $\delta\downarrow0$. Since $G\in L^2$, the
Gagliardo identity, valid also with value $+\infty$, gives
\[
 \|\dhalf G\|_{L^2(\R^n)}^2
 =\frac1{2\pi}\iint_{\R^2}
   \frac{\|G(\cdot,t)-G(\cdot,s)\|_{L^2(\R^{n-1})}^2}{|t-s|^2}
   \d t\d s=\infty.\qedhere
\]
\end{proof}

\section{The local estimate}\label{S.loc}

This section proves Theorem \ref{T.locest}. An estimate of this kind cannot
follow from interior estimates alone. For a solution vanishing on a boundary
portion, the Caccioppoli inequality and boundary H\"older continuity give
$\abs{\nabla u}\lesssim\delta^{-1}(\delta/r)^\beta\sup\abs u$ on a Whitney box
at height $\delta$, and $\delta^{\beta-1}$ diverges as $\delta\to0$. Our
argument is inspired by the proof of the Neumann counterpart in
\cite[\S\S3--4]{DLP2}.

\subsection{The Poisson--Dirichlet problem}\label{ss.PD}

We need to consider an  inhomogeneous problem, this is  the only place in the paper
where a solution is not given by \eqref{E.pmsol}. For
$F\in L^\infty_c(\om;\R^n)$ we call $v$ the \emph{finite-energy solution} of
$\LL^*v=\divg F$ with zero boundary data if $\nabla v\in L^2(\om)$, if
$v(\cdot,s)\in H^1_0(\om_s)$ for almost every $s$, if $v$ vanishes for $t$ above
the support of $F$, 
$$\sup_s\|v(\cdot,s)\|_{L^2(\om_s)}<\infty,$$
and
\begin{equation}\label{E.PDweak}
  \iint_\om\br{v\,\pd_t\zeta+A^{\mathsf T}\nabla v\cdot\nabla\zeta}
  \ =\ \iint_\om F\cdot\nabla\zeta
  \qquad\text{for every }\zeta\in C^\infty_c(\om) .
\end{equation}
Such a $v$ exists and is unique. We briefly specify the reduction, since the
coefficient of the time derivative is not constant after flattening. Write
$\Pi(X,t)=(\Theta(X,t),t)$ in \eqref{E.LM}, put
$$\mathbb J:=D_X\Theta,\qquad
  \widetilde A:=\mathsf J\,\mathbb J^{-1}(A\circ\Pi)\mathbb J^{-\mathsf T},
  \qquad
  \mathbf b_0:=\pd_t(P_{\gamma x_n}\phi)\,e_n,$$
where $\mathsf J=\det\mathbb J$ is the quantity in \eqref{E.gamma}. The
pull-back $\widetilde v:=v\circ\Pi$ satisfies
\begin{equation}\label{E.PDpull}
 \mathsf J\pd_t\widetilde v
 +\divg_X(\widetilde A^{\mathsf T}\nabla_X\widetilde v)
 -\mathbf b_0\cdot\nabla_X\widetilde v
 =\divg_X\widetilde F
 \quad\hbox{in }\Uc,
\end{equation}
where $\widetilde F$ is the Piola transform of $F$ and
$\|\widetilde F\|_{L^2}\lesssim\|F\|_{L^2}$. The two geometrical coefficients
satisfy
\begin{equation}\label{E.PDstructure}
 \divg_X\mathbf b_0=\pd_t\mathsf J,\qquad
 x_n\br{|\nabla_X\mathsf J|+|\mathbf b_0|}
 +x_n^2|\pd_t\mathsf J|\le C(n,\ell).
\end{equation}
Moreover $\mathsf J\approx1$ and $\widetilde A$ is bounded and uniformly
elliptic. The constant-time-coefficient construction in
\cite[\S1.5]{AEN} uses hidden coercivity. Here we use a Galerkin construction
with the time-dependent mass matrix. On a finite interval $(t_-,T)$,
where $T$ is later than the time support of $\widetilde F$, choose nested
finite-dimensional spaces spanned by linearly independent functions
$w_j\in C_c^\infty(\R^n_+)$ whose span is dense in $H^1_0(\R^n_+)$.
Impose zero terminal data at $T$ and extend the approximants by zero
for $t\ge T$.
The mass matrix has entries $\int_{\R^n_+}\mathsf J w_jw_k$ and is uniformly
positive definite on each such space. Its entries are locally absolutely
continuous in time, since the basis functions are supported away from
$x_n=0$. Thus the finite-dimensional system has a solution.

Testing that system by its solution $v_m$ gives the backward energy identity
\begin{equation}\label{E.PDenergy}
 -\frac12\frac{\d}{\d t}\int_{\R^n_+}\mathsf J |v_m|^2
 +\int_{\R^n_+}\widetilde A^{\mathsf T}\nabla_Xv_m\cdot\nabla_Xv_m
 =\int_{\R^n_+}\widetilde F\cdot\nabla_Xv_m.
\end{equation}
Indeed, the two additional terms are
$\frac12\int_{\R^n_+}(\pd_t\mathsf J)|v_m|^2$ and
$-\frac12\int_{\R^n_+}(\divg_X\mathbf b_0)|v_m|^2$, which cancel by
\eqref{E.PDstructure}. Integrating from $t$ to $T$ and using ellipticity
gives bounds uniform in the dimension and in $t_-$. Weak compactness, first as
the Galerkin dimension tends to infinity and then as $t_-\to-\infty$, gives a solution
of \eqref{E.PDpull} with
$$\|\nabla_X\widetilde v\|_{L^2(\Uc)}
 +\sup_s\|\widetilde v(\cdot,s)\|_{L^2(\R^n_+)}
 \ \lesssim\ \|\widetilde F\|_{L^2(\Uc)}.$$
Write $\dot H^1_0(\R^n_+)$ for the completion of $C_c^\infty(\R^n_+)$
in the gradient norm and $\dot H^{-1}(\R^n_+)$ for its dual.
The singular drift passes to the weak formulation by Hardy's inequality
$\|\varphi/x_n\|_2\le2\|\nabla_X\varphi\|_2$ for
$\varphi\in\dot H^1_0(\R^n_+)$.

The equation gives
\[
 \mathsf J\pd_t\widetilde v
 =\divg_X(\widetilde F-\widetilde A^{\mathsf T}\nabla_X\widetilde v)
   +\mathbf b_0\cdot\nabla_X\widetilde v.
\]
Its right-hand side belongs to $L^2(\R;\dot H^{-1})$. For the drift term,
we use
\[
 \left|\int_{\R^n_+}(\mathbf b_0\cdot\nabla_X\widetilde v)\varphi\right|
 \lesssim\|\nabla_X\widetilde v\|_2\|\varphi/x_n\|_2
 \lesssim\|\nabla_X\widetilde v\|_2\|\nabla_X\varphi\|_2.
\]
Moreover \eqref{E.PDstructure} and Hardy give
\[
 \|\nabla_X(\varphi/\mathsf J)\|_2
 \lesssim\|\nabla_X\varphi\|_2+\|\varphi/x_n\|_2
 \lesssim\|\nabla_X\varphi\|_2.
\]
By duality we may divide the equation by $\mathsf J$, obtaining
$\pd_t\widetilde v\in L^2(\R;\dot H^{-1}(\R^n_+))$.
Time regularisation now justifies the same energy identity for weak
solutions. The terms containing $\pd_t\mathsf J$ are integrable by
$|\pd_t\mathsf J|\lesssim x_n^{-2}$ and Hardy. Applied to a difference
with zero terminal data, the identity proves uniqueness. On every interval
after the time support of $\widetilde F$, the Galerkin solution is zero, so
the limit has the same property.
Changing variables gives the solution specified above.\medskip

\noindent We say that $(PD_{p'})^{\LL^*}$ holds if for every
$F\in L^\infty_c(\om;\R^n)$ that solution satisfies, with
$\wt{\mathcal C}$ the Carleson functional \eqref{E.carlfun},
$$\big\|\Nt(v)\big\|_{L^{p'}(\pom)}\ \le\
  C\big\|\wt{\mathcal C}(\delta F)\big\|_{L^{p'}(\pom)} .$$

\begin{lemma}\label{L.PD}
$(D_{\LL^*})_{p'}$ implies $(PD_{p'})^{\LL^*}$.
\end{lemma}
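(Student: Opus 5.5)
The plan is to represent $v$ through the Green function of $\LL^*$ and to bound $\Nt(v)$ pointwise by a maximal function, with respect to adjoint parabolic measure, of a conical $L^1$ functional of $\delta F$. Solvability of $(D_{\LL^*})_{p'}$ then enters in only one way: it makes this maximal operator bounded on $L^{p'}(\pom)$.

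\emph{Representation and splitting.} By uniqueness of finite-energy solutions and the approximation by smooth coefficients of Proposition \ref{P.potdecay}, for $F\in L^\infty_c(\om;\R^n)$ we have
\[
 v(Z)\ =\ -\iint_\om F(X,t)\cdot\nabla_XG(X,t;Z)\,\d X\d t ,
\]
with $G$ the Green function of $\LL$, read here as a function of its first variable. Fix $(q,\tau)\in\pom$ and $Z\in\Gam(q,\tau)$, and let $W_Z$ be a fixed enlargement of $B_{\delta(Z)/2}(Z)$.

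\begin{itemize}
\item[(a)] Split $F=F\mathbf 1_{W_Z}+F\mathbf 1_{\om\setminus W_Z}=:F_1+F_2$, with solutions $v_1$ and $v_2$.
\item[(b)] For $v_1$, use the energy estimate of \S\ref{ss.PD} localised to $W_Z$, together with Caccioppoli and Poincar\'e, to get $\big(\fiint_{B_{\delta(Z)/2}(Z)}|v_1|^2\big)^{1/2}\lesssim\delta(Z)\big(\fiint_{W_Z}|F|^2\big)^{1/2}$. The right-hand side is bounded by the Whitney average that appears in $\wt{\mathcal C}(\delta F)(q,\tau)$.
\item[(c)] On $B_{\delta(Z)/2}(Z)$ the function $v_2$ solves $\LL^*v_2=0$, so Moser's estimate reduces its average to a pointwise value at a nearby point.
\end{itemize}

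\emph{The main obstacle: bounding $v_2$.} Cover $\om\setminus W_Z$ by Whitney boxes $Q$ and apply Cauchy--Schwarz on each:
\[
 \iint_Q|F||\nabla G|\ \lesssim\ \Big(\fiint_{Q^*}|F|^2\Big)^{1/2}\,|Q|\,\delta(Q)^{-1}\fiint_{Q^*}G(\cdot;Z),
\]
using Caccioppoli for $G(\cdot;Z)$ in its first variable, where it solves $\LL$ away from the pole. Next use the comparison between the Green function and adjoint parabolic measure, in the form quoted in Proposition \ref{P.potdecay}(3) with time reversed:
\[
 G(X;Z)\ \lesssim\ \delta(X)^{-n}\,\omega^{*,Z}\big(\Delta_{C\delta(X)}(\hat X)\big),
\]
with $\hat X$ a boundary point near $X$. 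Boxes near the pole, in the wrong time direction, or far away are handled by \eqref{E.GreenGaussian}, \eqref{E.Greenbdry} and the support property of $G$. Summing over boxes and applying Fubini, each box is charged to the surface ball beneath it. This gives
\[
 |v_2(Z)|\ \lesssim\ \int_{\pom}\mathcal A(\delta F)\,\d\omega^{*,Z},
\]
where
\[
 \mathcal A(h)(y):=\iint_{\Gam_{a'}(y)}\Big(\fiint_{B_{\delta/2}}|h|^2\Big)^{1/2}\delta^{-n-2}\,\d X\d t
\]
is a conical $L^1$ functional. Localising to the Carleson region of $Z$ and using doubling of $\omega^*$, this becomes $|v_2(Z)|\lesssim M_{\omega^*}\big(\mathcal A(\delta F)\big)(q,\tau)$, where $M_{\omega^*}$ is the maximal function with respect to $\omega^*$ based at $(q,\tau)$. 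I expect this step to be the hardest: the time lag must be kept compatible, the pole must be avoided, and the comparison must be justified for the finite-energy solution rather than for $C_c$ data.

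\emph{Conclusion.} By a standard argument, $(D_{\LL^*})_{p'}$ is equivalent to the reverse H\"older class $B_p(\d\sigma)$ for the adjoint parabolic-measure densities. This makes the $\omega^*$-maximal operator above bounded by $M_\sigma$ of a power below $p'$, hence bounded on $L^{p'}(\pom)$. Therefore
\[
 \|\Nt(v)\|_{L^{p'}}\ \lesssim\ \|\wt{\mathcal C}(\delta F)\|_{L^{p'}}+\|\mathcal A(\delta F)\|_{L^{p'}} .
\]
Finally, the Coifman--Meyer--Stein tent-space estimate \cite{CMS} gives $\|\mathcal A(h)\|_{L^{p'}}\lesssim\|\wt{\mathcal C}(h)\|_{L^{p'}}$ for $1<p'<\infty$, which proves $(PD_{p'})^{\LL^*}$.
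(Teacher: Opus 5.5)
Your proposal is correct and follows essentially the route the paper takes: the paper's proof just invokes the argument of \cite[Lemma 3.8]{DLP2} (after \cite[\S3]{U}), which consists of your two parts, namely a local energy estimate controlled by the $L^2$ Whitney average of $F$, and for the remainder the Green function representation, the comparison of $G$ with adjoint parabolic measure, and doubling. One point to make explicit is that bounding $M_{\omega^*}$ by $M_\sigma$ of a power strictly below $p'$ needs the Gehring self-improvement of the reverse H\"older class $B_p$ to $B_{p+\varepsilon}$; the paper uses the same step in Lemma \ref{L.offdiag}.
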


\begin{proof}
This is the Whitney-averaged conclusion in the proof of
\cite[Lemma 3.8]{DLP2}, which follows the argument of \cite[\S3]{U}. In the
local part of that proof the average in $\Nt(v)$ permits the energy estimate
with the $L^2$ Whitney average of $F$. The remaining argument uses the Green
function estimates, boundary comparison and the doubling properties of
parabolic measure. These results hold on a $\Lip(1,\frac12)$ graph domain, so
the same proof applies on $\om$.
\end{proof}

\noindent We also use that for each fixed $C>0$ there are constants
$C_1,C'>0$, depending only on $C$, the graph character and the fixed aperture,
such that for $w\in L^2_{\loc}(\om)$, every $(P,\tau)\in\pom$ and every $r>0$,
\begin{equation}\label{E.conebd}
  \iint_{B_{Cr}(P,\tau)\cap\om}\abs w\,\d X\d t
  \ \le\ C'\,r\,\big\|\Nt(w)\big\|_{L^1(\Delta_{C_1r}(P,\tau))} .
\end{equation}
To prove this, take a sufficiently fine parabolic Whitney decomposition of
$\om$, and keep the boxes $W$ meeting $B_{Cr}(P,\tau)$. If $Z_W$ is the
centre of $W$ and $d_W=\delta(Z_W)$, we may arrange that
$W\subset B_{d_W/2}(Z_W)$ and $|W|\approx d_W^{n+2}$. Choose a nearest
boundary point $P_W$ to $Z_W$, and put $\Delta_W=\Delta_{ad_W/2}(P_W)$,
where $a$ is the original aperture in \eqref{E.cone}. The triangle inequality
gives $Z_W\in\Gam_a(q,\tau)$ for every $(q,\tau)\in\Delta_W$. Hence
\[
 \int_W|w|\lesssim d_W^{n+2}
       \inf_{\Delta_W}\Nt_a(w)
 \lesssim d_W\int_{\Delta_W}\Nt_a(w)\,\d\sigma.
\]
Here $d_W\lesssim r$ and $\Delta_W\subset\Delta_{C_1r}(P,\tau)$ for a
fixed $C_1$. At each dyadic scale the balls $\Delta_W$ have bounded overlap:
centres of the corresponding disjoint Whitney boxes lie within a distance
comparable to that scale of any point in their shadows. Summing over the
boxes at each scale and then using
$\sum_{2^{-k}r\lesssim r}2^{-k}r\lesssim r$ proves \eqref{E.conebd} with
the original aperture.\medskip

\subsection{Proof of Theorem \ref{T.locest}}\label{ss.locproof}

\begin{proof}[Proof of Theorem \ref{T.locest}]
Fix $K_0=K_0(n,\ell)$ large enough for \eqref{E.supu} below, and at least
$4$. We follow the scheme of \cite[Lemma 4.12]{DLP2}, in particular equations
(4.13) and (4.14) there.\medskip

Let $\varphi$ be a one-sided piecewise smooth cutoff on $\{t\le\tau\}$ with
$\varphi\equiv1$ on $J_{3r/2}$, $\varphi\equiv0$ outside
$J_{2r}\cap\{t\le\tau\}$, $\abs{\nabla\varphi}\lesssim r^{-1}$ and
$\abs{\pd_t\varphi}\lesssim r^{-2}$. We choose its transition regions with
positive separation from both $J_{5r/4}$ and the complement of $J_{4r}$.

Put
$$E_m:=\set{(X,t)\in J_r\cap\om:\ \delta(X,t)>m^{-1},\
 |\nabla u(X,t)|<m},\qquad
 F_m:=\nabla u\,\mathbf1_{E_m}.$$
By monotone convergence it is enough to obtain a bound independent of $m$ for
$\|\Nt(F_m)\|_{L^p}$. The vector-valued form of
\eqref{E.carlduality}, followed by truncation of the dual function to $E_m$,
gives $G=G_m\in L^\infty_c(\om;\R^n)$ such that
$$\|\wt{\mathcal C}(\delta G)\|_{L^{p'}}\le1,
  \qquad
  \|\Nt(F_m)\|_{L^p}
  \ \lesssim\ \iint_\om\nabla u\cdot G\,\varphi+m^{-1}.$$
We have changed the sign of $G$ if necessary. Its support is contained in
$E_m\subset J_r$, and hence
$$\iint_\om\nabla u\cdot G\,\varphi
  =\int_{-\infty}^\tau\!\!\int_{\om_t}\nabla(u\varphi)\cdot G .$$

Let $v$ be the finite-energy solution of $\LL^*v=-\divg G$ in $\om$ with
$v=0$ on $\pom$. We use $u\varphi$ in the weak equation \eqref{E.PDweak} for
$v$ and
$\varphi v$ in the weak equation for $u$. This cross-testing is justified by
pulling the two equations to $\Uc$, taking  averages in $t$, and passing
to the limit. Both test functions have zero lateral trace. Moreover $v=0$ for
$t>\tau$, and its terminal $L^2$ trace at $t=\tau$ is zero. Hence there is no
term from the upper time face. The two weak equations give the identity
\cite[(4.16)]{DLP2}, with the constant $c_u$ there equal to zero:
$$\iint\nabla u\cdot G\,\varphi
  \ =\ -\iint u\,v\,\pd_t\varphi
  \ +\ \iint A\nabla u\cdot\nabla\varphi\,v
  \ -\ \iint A\nabla\varphi\cdot\nabla v\,u
  \ =:\ I_0+I_1+I_2 .$$
Let $\mathcal A$ be the support of
$|\nabla\varphi|+|\pd_t\varphi|$. We choose two enlarged annular regions
$$\mathcal A\subset\mathcal A_1\subset\mathcal A'
 :=J_{4r}\cap\om\setminus J_{5r/4}$$
with successive separations comparable to $r$. At the upper time face we use
one-sided cylinders, or equivalently extend $v$ by zero to $t>\tau$.

\noindent\emph{The average of $u$.} Since $u$ vanishes on
$J_{K_0r}\cap\pom$, the
local boundedness estimate up to that boundary portion \cite[Ch.~III]{LSU} and
the Poincar\'e inequality give
\begin{equation}\label{E.supu}
  \sup_{J_{2r}\cap\om}\abs u\ \le\ C\fiint_{J_{4r}\cap\om}\abs u
  \ \le\ Cr\fiint_{J_{K_0r}\cap\om}\abs{\nabla u} .
\end{equation}
Here and below $C$ may depend on $K_0$. Over the time interval of $J_{4r}$ the
graph moves by at most $C\ell r$. Our choice of $K_0=K_0(n,\ell)$ therefore
places a quantitative part of the zero side of every relevant spatial slice
inside the spatial ball of $J_{K_0r}$. We apply the spatial Poincar\'e
inequality, slice by slice, to the extension of $u$ by zero across the graph.
This extension lies in $W^{1,2}$ because the trace of $u$ vanishes there.

\noindent\emph{The averages of $v$ and of $\nabla v$.} The source $G$ is
supported in $J_r$, so $v$ solves the
homogeneous adjoint equation on $\mathcal A'$ and vanishes on the boundary
portion there. Cover $\mathcal A_1$ by cubes of size $cr$ whose doubles stay
in $\mathcal A'$. The local boundedness estimate \cite[Ch.~III]{LSU}, in its
interior form on the cubes that do not meet $\pom$ and in its boundary form on
those that do, then \eqref{E.conebd} and $(PD_{p'})^{\LL^*}$, which holds by
Lemma \ref{L.PD}, give
\begin{equation}\label{E.supv}
  \sup_{\mathcal A_1}\abs v\ \le\ Cr^{-n-2}\iint_{\mathcal A'}\abs v
  \ \le\ Cr^{-n-1}\big\|\Nt(v)\big\|_{L^1(\Delta_{Cr})}
  \ \le\ Cr^{-(n+1)/p'} ,
\end{equation}
the last step by H\"older's inequality and
$\|\Nt(v)\|_{L^{p'}}\le C\|\wt{\mathcal C}(\delta G)\|_{L^{p'}}\le C$. The
Caccioppoli inequality \cite{Ar}, now on cubes covering $\mathcal A$ whose
doubles stay in $\mathcal A_1$, turns \eqref{E.supv} into
\begin{equation}\label{E.gradv}
  \iint_{\mathcal A}\abs{\nabla v}\ \le\ Cr^{n+2}
  \Big(\fiint_{\mathcal A}\abs{\nabla v}^2\Big)^{1/2}
  \ \le\ Cr^{n+1}\sup_{\mathcal A_1}\abs v
  \ \le\ Cr^{n+1}r^{-(n+1)/p'} ,
\end{equation}
which is $Cr^{(n+1)/p}$.\medskip

\noindent With this in hand the three terms are estimated, using
$\abs{\pd_t\varphi}\lesssim r^{-2}$ and $\abs{\nabla\varphi}\lesssim r^{-1}$
throughout. The term $I_0$ is bounded by \eqref{E.supu}, \eqref{E.supv} and
$\abs{\mathcal A}\le Cr^{n+2}$. The term $I_1$ is bounded by \eqref{E.supv} and
$\abs{\mathcal A}\le Cr^{n+2}$; only the integral of $|\nabla u|$ over
$\mathcal A$ is used. The term $I_2$ is bounded by
\eqref{E.supu} and \eqref{E.gradv}. Each of the three is at most
$$Cr^{(n+1)/p}\fiint_{J_{K_0r}\cap\om}\abs{\nabla u}.$$
The constant is independent of $m$. Letting $m\to\infty$ in the first part of
the proof and using monotone convergence gives \eqref{E.locest}.
\end{proof}

\section{Sobolev extrapolation}\label{S.extrapolation}

We prove the part of Theorem \ref{T.dual} in which the exponent at which the
Regularity problem is already known is smaller than $p$. The real-variable
argument is a Sobolev version of \cite[Theorem 3.1]{Shen}. We state it with an
arbitrary base exponent. This is needed because the known Regularity estimate
need not be an $L^2$ estimate.

We use parabolic cubes in $\R^{n-1}\times\R$. Thus a cube of radius $r$ has
spatial side length comparable to $r$, time length comparable to $r^2$, and
measure comparable to $r^{n+1}$. If $Q$ is such a cube, $aQ$ denotes its
concentric parabolic dilate.

\begin{lemma}[localisation of the datum]\label{L.datumloc}
Let $1<s<\infty$. For every $f\in C_c^\infty(\R^{n-1}\times\R)$ there is a
nonnegative function $G_f\in L^s$ such that
\begin{equation}\label{E.Gfnorm}
 \|G_f\|_{L^s}\ \le\ C_s\big(\|\nabla_yf\|_{L^s}
                  +\|\dhalf f\|_{L^s}\big).
\end{equation}
For every parabolic cube $Q$ and every fixed $a>1$ there are a constant $c_Q$
and a smooth cutoff $\eta_Q$, equal to one on $aQ$ and supported in $A_0Q$,
where $A_0=A_0(a)$, such that, for every $1<p_0\le s$,
\begin{equation}\label{E.datumloc}
 \big\|\eta_Q(f-c_Q)\big\|_{\dot L^{p_0}_{1,1/2}}
 \ \le\ C\left(\int_{A_0 Q}G_f^{p_0}\right)^{1/p_0}.
\end{equation}
The constants are independent of $f$ and $Q$.
\end{lemma}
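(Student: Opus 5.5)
Given $f$, I will take
\[
G_f:=M\big(|\nabla_y f|^{\sigma}\big)^{1/\sigma}+M\big(|\dhalf f|^{\sigma}\big)^{1/\sigma}+\mathcal S_t f+\mathcal M_t f ,
\]
where $M$ is the parabolic Hardy--Littlewood maximal function and $1<\sigma<\min\{p_0\}$. Since $p_0>1$ is arbitrary, I will instead take $\sigma=1$ and avoid maximal functions on the pieces where possible. The remaining term $\mathcal M_t f$ is a maximal-type time-difference quantity; its form is chosen below. The bound \eqref{E.Gfnorm} then follows from $L^s$ boundedness of $M$ for $s>1$ and from Lemma \ref{L.timesquare}.

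For the cube $Q$ of radius $r$, I take $c_Q$ to be the average of $f$ over $A_0Q$ and $\eta_Q$ a standard parabolic cutoff with $|\nabla\eta_Q|\lesssim r^{-1}$ and $|\pd_t\eta_Q|\lesssim r^{-2}$. I split $g:=\eta_Q(f-c_Q)$ via \eqref{E.norms}.

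\emph{The spatial gradient.} Here $\nabla_y g=\eta_Q\nabla_y f+(f-c_Q)\nabla_y\eta_Q$. The first term is controlled by $\|\nabla_y f\|_{L^{p_0}(A_0Q)}$. For the second I will use the parabolic Poincar\'e inequality, which bounds $r^{-1}\|f-c_Q\|_{L^{p_0}(A_0Q)}$ by local norms of $\nabla_y f$ together with a local time-difference quantity. This requires the local Poincar\'e inequality for $\dot L^{p}_{1,1/2}$ in which the time regularity enters through local Gagliardo differences rather than through the nonlocal $\dhalf f$.

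\emph{The half derivative.} By Lemma \ref{L.timesquare} it suffices to bound $\|\mathcal S_t g\|_{L^{p_0}}$. I write
\[
g(y,t)-g(y,\tau)=\eta_Q(y,t)\big(f(y,t)-f(y,\tau)\big)+\big(\eta_Q(y,t)-\eta_Q(y,\tau)\big)\big(f(y,\tau)-c_Q\big).
\]
The first term is pointwise at most $\mathcal S_t f$, but only on the time slices through $A_0Q$. For $(y,t)$ outside $A_0Q$ with $y$ in the spatial support, only $\tau$ with $\eta_Q(y,\tau)\neq0$ contribute. This yields a tail of size $|t-t_Q|^{-1}\big(\int|f(y,\tau)-c_Q|^2\,\d\tau\big)^{1/2}$ over $\tau$ in the support, which is integrable in $t$ in $L^{p_0}$ and reduces again to the Poincar\'e quantity. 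The second term is bounded by $\min(r^{-2}|t-\tau|,1)$ times $|f-c_Q|$, which is again handled by Poincar\'e.

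\emph{The main obstacle.} The hard step is making the right-hand side depend only on values of $G_f$ inside $A_0Q$. Two places threaten this: the nonlocality of $\mathcal S_t f$ in $\tau$, and the use of $\dhalf f$ in Poincar\'e. I would first try putting the full quantity $\mathcal S_t f(y,t)$, restricted to $(y,t)\in A_0Q$, into $G_f$. This is legitimate because $\mathcal S_t f$ is defined pointwise and is a global function in $L^s$. The local Poincar\'e inequality then reads
\[
\int_{A_0Q}|f-c_Q|^{p_0}\lesssim r^{p_0}\int_{A_0Q}\big(|\nabla_y f|+\mathcal S_t f\big)^{p_0},
\]
and I would prove it slice-by-slice. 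In space this is the standard Poincar\'e inequality. In time it is the one-dimensional bound $\int_I|h-h_I|^{p_0}\lesssim|I|^{p_0/2}\int_I\big(\int_I|h(t)-h(\tau)|^2|t-\tau|^{-2}\,\d\tau\big)^{p_0/2}\d t$, which follows from Jensen and H\"older.

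So I set $G_f:=|\nabla_y f|+\mathcal S_t f$, with no maximal function needed. The remaining tail term for $(y,t)\notin A_0Q$ uses only $f$ on $A_0Q$, and the Poincar\'e estimate above closes the argument.
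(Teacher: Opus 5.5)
Your final choice $G_f=|\nabla_yf|+\mathcal S_tf$, with $c_Q$ the average over $A_0Q=B\times I$, is sound. So is the integrated Poincar\'e inequality $\int_{A_0Q}|f-c_Q|^{p_0}\lesssim r^{p_0}\int_{A_0Q}G_f^{p_0}$: split $f-c_Q=(f-f_B(t))+(f_B(t)-c_Q)$, use spatial Poincar\'e on the first piece, and Cauchy--Schwarz in time at each $x$ followed by Jensen on the second. With this, the spatial-gradient estimate closes.

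The gap is in the half-derivative. In your splitting the factor $f-c_Q$ is evaluated at the integration variable $\tau$. For $(y,t)$ in the support of $\eta_Q$, the variable $\tau$ runs over all of $\R$. The second term therefore contains $f(y,\tau)$ far outside $A_0Q$, which no Poincar\'e inequality on $A_0Q$ sees; the two terms cancel each other there, but not individually. Even for $\tau$ inside $A_0Q$, that term is an $L^2$ average in $\tau$ of $|f(y,\tau)-c_Q|$ against $\min\{r^{-4},|t-\tau|^{-2}\}$. When $p_0<2$, its $L^{p_0}$ norm in $t$ is not bounded by $r^{-1}\|f-c_Q\|_{L^{p_0}}$; a narrow spike in time shows this. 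The tail has the same defect. It needs $\int_B\|f(y,\cdot)-c_Q\|_{L^2(I)}^{p_0}\d y$, and for $p_0<2$ this is not controlled by the $L^{p_0}$ Poincar\'e quantity. So ``reduces again to the Poincar\'e quantity'' is false as stated. The range $p_0<2$ cannot be dropped, since in Theorem \ref{T.SobolevShen} the exponent $p_0$ is the one at which Regularity is already known.

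Both defects are removed by one extra idea, which is what the paper does. First, split instead as $g(y,t)-g(y,\tau)=\eta_Q(y,\tau)\big(f(y,t)-f(y,\tau)\big)+\big(\eta_Q(y,t)-\eta_Q(y,\tau)\big)\big(f(y,t)-c_Q\big)$. This gives $\mathcal S_tg\le\mathcal S_tf+|f-c_Q|\,\mathcal S_t\eta_Q\lesssim\mathcal S_tf+r^{-1}|f-c_Q|$ pointwise on $A_0Q$. Second, for the tail, put $\supp\eta_Q$ strictly inside, e.g. in $2aQ$ with $A_0=4a$, so that $|t-\tau|\approx|t-t_Q|$ there. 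Then use $\|f(y,\cdot)-c_Q\|_{L^2(I)}\le|I|^{1/2}|f(y,v)-c_Q|+|I|\,\mathcal S_tf(y,v)$ for every $v\in I$. Raise to the power $p_0$, average in $v\in I$, and integrate in $y\in B$; only the $L^{p_0}(A_0Q)$ norms of $f-c_Q$ and $\mathcal S_tf$ remain.

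With these repairs your argument is complete, and it is slightly more elementary than the paper's. The paper takes $G_f=\mathcal M_y(|\nabla_yf|)+\mathcal M_y(\mathcal S_tf)$ to get the pointwise oscillation bound \eqref{E.datumosc}. Your route shows that its integrated $L^{p_0}$ form suffices, so no maximal function is needed.
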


\begin{proof}
Let $\mathcal M_y$ be the uncentred Hardy--Littlewood maximal operator
over spatial cubes, at fixed time, and put
\[
 G_f:=\mathcal M_y(|\nabla_y f|)+\mathcal M_y(\mathcal S_t f).
\]
Lemma \ref{L.timesquare}, the spatial maximal theorem and Fubini give
\eqref{E.Gfnorm} for every $s>1$. Also
$G_f\ge|\nabla_y f|+\mathcal S_t f$ almost everywhere.

Fix a cube $Q$ with centre $(y_Q,t_Q)$ and radius $r$. We use the
normalisation in which its spatial half-side length is $r$ and its time
half-length is $r^2$. Put $R=4ar$ and write
\[
 Q^*:=4aQ=B\times I,\qquad
 B=\{y:|y-y_Q|_\infty<R\},\qquad
 I=(t_Q-R^2,t_Q+R^2).
\]
Set $c_Q=f_{Q^*}$ and $f_B(t)=\fiint_B f(y,t)\,\d y$.
Spatial Poincar\'e and telescoping give
\[
 |f(y,t)-f_B(t)|\lesssim R\mathcal M_y(|\nabla_y f|)(y,t),
 \qquad y\in B.
\]
Indeed, telescope the averages on cubes centred at $y$ with radii
$2R,R,R/2,\ldots$. Poincar\'e bounds the sum by the right-hand side.
The first cube contains $B$ with comparable volume, so Poincar\'e also
bounds the difference of their averages by the same quantity.

For $x\in B$ and $t\in I$, Cauchy--Schwarz gives
\begin{align*}
 \left|f(x,t)-\fiint_I f(x,\tau)\,\d\tau\right|
 &\le \frac1{|I|}
 \left(\int_I\frac{|f(x,t)-f(x,\tau)|^2}{|t-\tau|^2}\,\d\tau\right)^{1/2}
 \left(\int_I|t-\tau|^2\,\d\tau\right)^{1/2}\\
 &\lesssim R\mathcal S_t f(x,t).
\end{align*}
Average in $x\in B$ and use $y\in B$. Combining the two estimates yields
\begin{equation}\label{E.datumosc}
 |f(y,t)-c_Q|\lesssim R G_f(y,t)
 \qquad\hbox{a.e. on }Q^*.
\end{equation}

Choose a smooth product cutoff $0\le\eta_Q\le1$, equal to one on $aQ$
and compactly supported in $2aQ$, with
$|\nabla_y\eta_Q|\lesssim R^{-1}$ and
$|\pd_t\eta_Q|\lesssim R^{-2}$. Write $b=\eta_Q(f-c_Q)$.
The spatial product rule and \eqref{E.datumosc} give
\begin{equation}\label{E.datumspatial}
 \|\nabla_y b\|_{L^{p_0}}
 \lesssim\left(\int_{Q^*}G_f^{p_0}\right)^{1/p_0}.
\end{equation}

We estimate the time derivative through Lemma \ref{L.timesquare}.
The cutoff bounds imply
\[
 \mathcal S_t\eta_Q(y,t)^2
 \lesssim\int_\R\frac{\min\{1,|h|^2/R^4\}}{|h|^2}\,\d h
 \lesssim R^{-2}.
\]
For all $\tau\in\R$ we have the identity
\begin{align*}
 b(y,t)-b(y,\tau)
 &=\eta_Q(y,\tau)\big(f(y,t)-f(y,\tau)\big)\\
 &\quad+\big(\eta_Q(y,t)-\eta_Q(y,\tau)\big)\big(f(y,t)-c_Q\big).
\end{align*}
Minkowski's inequality in \eqref{E.timesquaredef} and \eqref{E.datumosc}
therefore give
\begin{equation}\label{E.datumlocaltime}
 \mathcal S_t b(y,t)
 \le\mathcal S_t f(y,t)+|f(y,t)-c_Q|\mathcal S_t\eta_Q(y,t)
 \lesssim G_f(y,t),\qquad (y,t)\in Q^*.
\end{equation}

It remains to estimate the time square function outside $I$. Fix $y\in B$
and put $L=|I|=2R^2$. For every $v\in I$, the triangle inequality in
$L^2(I)$ gives
\[
 \|b(y,\cdot)\|_{L^2(I)}
 \le \|f(y,\cdot)-c_Q\|_{L^2(I)}
 \le L^{1/2}|f(y,v)-c_Q|+L\mathcal S_t f(y,v).
\]
In the last step we used $|\tau-v|\le L$ for $\tau,v\in I$.
Average in $v$, apply \eqref{E.datumosc}, and then use H\"older to obtain
\begin{equation}\label{E.datumtimeLtwo}
 \|b(y,\cdot)\|_{L^2(I)}
 \lesssim L\fiint_I G_f(y,v)\,\d v
 \le L^{1-1/p_0}
       \left(\int_I G_f(y,v)^{p_0}\,\d v\right)^{1/p_0}.
\end{equation}
This estimate holds also when $1<p_0<2$.

For $j\ge0$, let
\[
 I_j=\{t:2^jR^2\le|t-t_Q|<2^{j+1}R^2\}.
\]
These annuli cover $\R\setminus I$ up to endpoints. The time support of
$b$ is contained in $[t_Q-R^2/4,t_Q+R^2/4]$. Hence, for $t\in I_j$,
$b(y,t)=0$ and
\[
 \mathcal S_t b(y,t)
 \lesssim (2^jR^2)^{-1}\|b(y,\cdot)\|_{L^2(I)}.
\]
Since $|I_j|\approx2^jR^2$, \eqref{E.datumtimeLtwo} yields
\[
 \int_{I_j}\mathcal S_t b(y,t)^{p_0}\,\d t
 \lesssim 2^{j(1-p_0)}\int_I G_f(y,v)^{p_0}\,\d v.
\]
Sum over $j\ge0$, using $p_0>1$, and integrate in $y\in B$.
Outside $B$ the function $b(y,\cdot)$ vanishes identically.
Together with \eqref{E.datumlocaltime}, this proves
\[
 \|\mathcal S_t b\|_{L^{p_0}}
 \lesssim\left(\int_{Q^*}G_f^{p_0}\right)^{1/p_0}.
\]
Lemma \ref{L.timesquare}, \eqref{E.datumspatial} and \eqref{E.norms}
now give \eqref{E.datumloc} with $A_0=4a$. The function $G_f$ is
independent of $Q$, $a$ and $p_0$, and the estimate holds for every
$1<p_0\le s$.
\end{proof}

\begin{theorem}[Sobolev extrapolation]\label{T.SobolevShen}
Let $1<p_0<p_1\le\infty$, and let $\mathcal T$ be a nonnegative-valued
operator defined on $C_c^\infty(\R^{n-1}\times\R)+\R$. Suppose
that
\begin{equation}\label{E.Tsublinear}
 |\mathcal Tf-\mathcal Tg|\le\mathcal T(f-g),\qquad
 \mathcal T(f+c)=\mathcal Tf,
\end{equation}
and that
\begin{equation}\label{E.Tpzero}
 \|\mathcal Tf\|_{L^{p_0}}
 \ \le\ C_0\|f\|_{\dot L^{p_0}_{1,1/2}}.
\end{equation}
Assume that there are $a_2>a_1>1$ and $C_1$ such that, whenever $f$ is
constant on $a_2Q$,
\begin{equation}\label{E.Toffdiag}
 \left(\fiint_Q|\mathcal Tf|^{p_1}\right)^{1/p_1}
 \ \le\ C_1
 \left(\fiint_{a_1Q}|\mathcal Tf|^{p_0}\right)^{1/p_0}
\end{equation}
for every parabolic cube $Q$, with the usual supremum interpretation when
$p_1=\infty$. Then, for every $p_0<s<p_1$,
\begin{equation}\label{E.Ts}
 \|\mathcal Tf\|_{L^s}
 \ \le\ C_s\|f\|_{\dot L^s_{1,1/2}}.
\end{equation}
\end{theorem}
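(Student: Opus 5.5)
We follow the real-variable scheme of \cite[Theorem 3.1]{Shen}. That is, we prove a good-$\lambda$ (or level-set) inequality for the maximal function of $|\mathcal Tf|^{p_0}$. The Sobolev datum is localised with Lemma \ref{L.datumloc} in place of the usual $L^p$ truncation. Fix $f\in C_c^\infty$ and $p_0<s<p_1$, and let $G_f$ be the function of Lemma \ref{L.datumloc} with this $s$. Put
\[
 \mathcal M_Q:=\mathcal M\big(|\mathcal Tf|^{p_0}\big)^{1/p_0},
\]
with $\mathcal M$ the parabolic Hardy--Littlewood maximal operator. The goal is the estimate: for every $\lambda>0$, small $\gamma>0$ and large $A$,
\[
 \big|\{\mathcal M_Q>A\lambda\}\big|\ \le\ \Big(\frac{C}{A^{p_1}}
 +\frac{C\gamma^{p_0}}{A^{p_0}}\Big)\big|\{\mathcal M_Q>\lambda\}\big|
 +\big|\{\mathcal M(G_f^{p_0})^{1/p_0}>\gamma\lambda\}\big| .
\]
Multiplying by $s\lambda^{s-1}$ and integrating, we choose $A$ large and then $\gamma$ small. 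Since $s<p_1$, the first coefficient times $A^s$ is small, and the unwanted term is absorbed. This absorption needs $\|\mathcal M_Q\|_{L^s}<\infty$ a priori. We get it by first running the argument with the truncation $\min\{\mathcal M_Q,N\}$, or on a bounded region, as in Shen. Finally $\|\mathcal M(G_f^{p_0})^{1/p_0}\|_{L^s}\lesssim\|G_f\|_{L^s}$ because $s>p_0$, and \eqref{E.Gfnorm} with \eqref{E.norms} gives \eqref{E.Ts}.

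The good-$\lambda$ step goes through a Calder\'on--Zygmund decomposition of the open set $\{\mathcal M_Q>\lambda\}$ into maximal parabolic dyadic cubes $Q$. On each $Q$ we may assume there is a point where $\mathcal M(G_f^{p_0})^{1/p_0}\le\gamma\lambda$; otherwise $Q$ lies in the last set. Take the cutoff $\eta_Q$ and the constant $c_Q$ of Lemma \ref{L.datumloc}, with $a=a_2$ and a suitable enlargement, and split
\[
 f=\big(c_Q+\eta_Q(f-c_Q)\big)+(1-\eta_Q)(f-c_Q)=:f_1+f_2 .
\]
By \eqref{E.Tsublinear}, $\mathcal Tf\le\mathcal T(\eta_Q(f-c_Q))+\mathcal Tf_2$ and $\mathcal Tf_2\le\mathcal Tf+\mathcal T(\eta_Q(f-c_Q))$.

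For the local part we combine \eqref{E.Tpzero} and \eqref{E.datumloc}:
\[
 \fiint_{Q}\mathcal T(\eta_Q(f-c_Q))^{p_0}
 \lesssim \fiint_{A_0Q}G_f^{p_0}\lesssim(\gamma\lambda)^{p_0}.
\]
The far part $f_2$ is constant, equal to $0$ after removing $c_Q$, on $a_2Q$. So \eqref{E.Toffdiag} gives
\[
 \Big(\fiint_Q(\mathcal Tf_2)^{p_1}\Big)^{1/p_1}\lesssim
 \Big(\fiint_{a_1Q}(\mathcal Tf_2)^{p_0}\Big)^{1/p_0}
 \lesssim\Big(\fiint_{a_1Q}(\mathcal Tf)^{p_0}\Big)^{1/p_0}+\gamma\lambda\lesssim\lambda .
\]
Here the last step uses maximality of $Q$: the dyadic parent, hence a comparable dilate, has $|\mathcal Tf|^{p_0}$ average at most $C\lambda^{p_0}$. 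Localising the maximal function, for $A$ large we have $\mathcal M_Q>A\lambda$ on $Q$ only where the local maximal functions of the two pieces restricted to a dilate of $Q$ are large. The weak $(1,1)$ bound, applied to the local piece in $L^{p_0}$, and the weak $(p_1/p_0,p_1/p_0)$ bound, applied to the far piece, give the two coefficients. Summing over the disjoint cubes gives the inequality. When $p_1=\infty$ the far piece is bounded by $C\lambda$ pointwise on $Q$, so that term vanishes for $A$ large.

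We expect the main obstacle to be the bookkeeping of the dilations. The cutoff $\eta_Q$ is supported in $A_0Q$ while \eqref{E.Toffdiag} is applied on $Q$ with constancy needed on $a_2Q$. So the lemma must be invoked with $a=a_2$, and the condition on $G_f$ must be read on $A_0Q$. We handle this by choosing the dyadic grid so that $A_0Q$ is contained in a bounded dilate of a cube containing a point of small $\mathcal M(G_f^{p_0})$. We also need the a priori finiteness mentioned above. The operator is only defined on $C_c^\infty+\R$, but $\eta_Q(f-c_Q)$ and $f_2$ both lie in this class, since $f$ is smooth and $\eta_Q$ is a smooth cutoff. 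So no extension of $\mathcal T$ is required.
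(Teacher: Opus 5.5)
Your proposal is the paper's argument: Shen's good-$\lambda$ inequality for $\mathcal M(|\mathcal Tf|^{p_0})$, and, on cubes meeting $\{\mathcal M(G_f^{p_0})\le(\gamma\lambda)^{p_0}\}$, the splitting of $f$ supplied by Lemma \ref{L.datumloc}. The local piece is handled by \eqref{E.Tpzero} and the weak $(1,1)$ bound, the far piece by \eqref{E.Toffdiag} and the weak $(p_1/p_0,p_1/p_0)$ bound, and a truncation at height $N$ supplies the a priori finiteness. Your $f_2$ is the paper's $h_Q-c_Q$.

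There is one concrete slip, in the dilation bookkeeping you identify as the main obstacle. The maximal function cannot be localised to $Q$ itself. A cube of radius less than $r(Q)$ through a point of $Q$ lies in $3Q$ but in general not in $Q$. So the weak $(p_1/p_0,p_1/p_0)$ bound for the far piece needs
\[
 \int_{3Q}|\mathcal Tf_2|^{p_1}\lesssim\lambda^{p_1}|Q|,
\]
and not only the bound on $Q$ that you derive. You must therefore apply \eqref{E.Toffdiag} to the cube $3Q$, which requires $f_2$ to be constant on $3a_2Q$. Hence Lemma \ref{L.datumloc} has to be invoked with $a=3a_2$, which is the paper's choice, and not with $a=a_2$ as you propose. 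The right-hand side of \eqref{E.Toffdiag} is then an average over $3a_1Q$. This is still $\lesssim\lambda^{p_0}$, since $3a_1Q$ contains the dyadic parent of $Q$ and hence a point where $\mathcal M(|\mathcal Tf|^{p_0})\le\lambda^{p_0}$.

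No special choice of dyadic grid is needed to handle $A_0Q$. The point of $Q$ where $\mathcal M(G_f^{p_0})\le(\gamma\lambda)^{p_0}$ lies in the cube $A_0Q$, so $\fiint_{A_0Q}G_f^{p_0}\le(\gamma\lambda)^{p_0}$ follows directly from the definition of the uncentred maximal function. With the change $a=3a_2$ your argument is complete.
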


\noindent The hypothesis \eqref{E.Toffdiag} carries no term in $f$ itself,
where \cite[(3.2)]{Shen} carries
\[
 \sup_{Q'\supset Q}\big(\fiint_{Q'}|f|^{p_0}\big)^{1/p_0}.
\]
Ours is therefore a stronger hypothesis, and it is what Lemma
\ref{L.offdiag} gives us. Here $G_f$ plays the role of the omitted term from
\cite{Shen}.

\begin{proof}
We give the modification of Shen's good-$\lambda$ proof. Fix
$p_0<s<p_1$, take $G_f$ from Lemma \ref{L.datumloc}, and put
\[
 E(\lambda)=\{\mathcal M(|\mathcal Tf|^{p_0})>\lambda\},\qquad
 H=\mathcal M(G_f^{p_0}),
\]
where $\mathcal M$ is the uncentred parabolic Hardy--Littlewood maximal
operator over cubes.
By \eqref{E.Tpzero} we have $\mathcal Tf\in L^{p_0}$, so
$\mathcal M(|\mathcal Tf|^{p_0})$ lies in weak $L^1$ and $E(\lambda)$ has
finite measure for every $\lambda>0$. Hence
\[
 \int_0^N\lambda^{s/p_0-1}\abs{E(\lambda)}\d\lambda\ <\ \infty
 \qquad\hbox{for every }N<\infty ,
\]
the integral converging at the origin because $s>p_0$. All
distribution-function integrals below are truncated at such a height $N$, the
estimates are uniform in $N$, and we let $N\to\infty$ by monotone convergence
at the end. The argument therefore does not assume the $L^s$ membership that
it is proving.

Put $W=|\mathcal Tf|^{p_0}$. Decompose the open set $E(\lambda)$ into
maximal parabolic dyadic cubes $Q$ for which
$\overline{8Q}\subset E(\lambda)$. These cubes are disjoint up to their
boundaries and cover $E(\lambda)$ up to a null set. If $\widehat Q$ is
the dyadic parent of $Q$, maximality gives a point
\[
 z_Q\in\overline{8\widehat Q}\setminus E(\lambda)
 \subset20Q,\qquad \mathcal M W(z_Q)\le\lambda.
\]
Here each parent has twice the spatial side lengths and four times
the time length of its children.

For $z\in Q$, every cube $P$ containing $z$ with radius at least $r(Q)$
can be enlarged by a fixed factor to contain $z_Q$. Its average of $W$
is therefore at most $C_*\lambda$, with a fixed $C_*$. Every smaller
cube $P$ containing $z$ lies in $3Q$. Consequently, if $B>C_*$,
\begin{equation}\label{E.maxlocalised}
 Q\cap E(B\lambda)
 \subset\{z\in Q:\mathcal M(W\mathbf1_{3Q})(z)>B\lambda\}.
\end{equation}
Also, for every fixed $c\ge1$, enclosing $cQ$ and $z_Q$ in a common
cube of comparable size gives
\begin{equation}\label{E.whitneyaverage}
 \fiint_{cQ}W\le C_c\lambda.
\end{equation}

Suppose $Q\cap\{H\le\gamma\lambda\}\ne\varnothing$, where $0<\gamma\le1$.
Apply Lemma \ref{L.datumloc} with $a=3a_2$ and write
\[
 b_Q=\eta_Q(f-c_Q),\qquad h_Q=f-b_Q.
\]
Thus $h_Q$ is constant on $3a_2Q$. The enclosing cube $A_0Q$ contains
a point where $H\le\gamma\lambda$, so
$\int_{A_0Q}G_f^{p_0}\le A_0^{n+1}\gamma\lambda|Q|$. Hence
\begin{equation}\label{E.bQ}
 \|b_Q\|_{\dot L^{p_0}_{1,1/2}}^{p_0}
 \ \le\ C\gamma\lambda|Q|.
\end{equation}
By \eqref{E.whitneyaverage} with $c=3a_1$,
\eqref{E.Tsublinear}, \eqref{E.Tpzero} and \eqref{E.bQ},
\[
 \fiint_{3a_1Q}|\mathcal T h_Q|^{p_0}
 \lesssim\fiint_{3a_1Q}W
       +|Q|^{-1}\|\mathcal T b_Q\|_{L^{p_0}}^{p_0}
 \lesssim\lambda.
\]
We may apply \eqref{E.Toffdiag} on $3Q$, since its required constant-data
patch is precisely $3a_2Q$. For $p_1<\infty$ this gives
\[
 \int_{3Q}|\mathcal T h_Q|^{p_1}
 \lesssim\lambda^{p_1/p_0}|Q|.
\]
In \eqref{E.maxlocalised}, use
$W\le2^{p_0-1}(|\mathcal T b_Q|^{p_0}
                  +|\mathcal T h_Q|^{p_0})$.
The weak $(1,1)$ bound for $\mathcal M$ applied to
$|\mathcal T b_Q|^{p_0}$, and the weak
$(p_1/p_0,p_1/p_0)$ bound applied to
$|\mathcal T h_Q|^{p_0}\mathbf1_{3Q}$, yield
\begin{equation}\label{E.localgoodlambda}
 |Q\cap E(B\lambda)|
 \ \le\ C\left(\frac\gamma B+B^{-p_1/p_0}\right)|Q|.
\end{equation}
If $p_1=\infty$, the off-diagonal bound instead gives
$\|\mathcal T h_Q\|_{L^\infty(3Q)}^{p_0}\lesssim\lambda$.
Its contribution to the maximal function cannot exceed $C\lambda$,
so the second term in \eqref{E.localgoodlambda} is absent for large $B$.

Choose $B=(2\varepsilon)^{-p_0/s}$. Since $p_1>s$, we may first choose
$\varepsilon>0$ and then $\gamma>0$ so that the right side of
\eqref{E.localgoodlambda} is at most $\varepsilon|Q|$. Summing over the
maximal cubes yields
\[
 |E(B\lambda)|\le\varepsilon|E(\lambda)|
                 +|\{H>\gamma\lambda\}|.
\]
Put $\alpha=s/p_0$ and
$J(N)=\int_0^N\lambda^{\alpha-1}|E(\lambda)|\,\d\lambda$.
Multiplying the last inequality by $\lambda^{\alpha-1}$ and integrating
from $0$ to $N$ gives
\[
 B^{-\alpha}J(BN)
 \le\varepsilon J(N)+\frac{\gamma^{-\alpha}}{\alpha}\|H\|_{L^\alpha}^{\alpha}.
\]
Since $J(N)\le J(BN)<\infty$ and $\varepsilon B^\alpha=1/2$,
we may absorb the first term. Before letting
$N\to\infty$ the resulting estimate is
\[
 \|\min\{\mathcal M(|\mathcal Tf|^{p_0}),N\}\|_{L^{s/p_0}}^{s/p_0}
 \lesssim \|\mathcal M(G_f^{p_0})\|_{L^{s/p_0}}^{s/p_0}.
\]
The strong $L^{s/p_0}$ bound for $\mathcal M$, \eqref{E.Gfnorm}, and monotone
convergence now give
\[
 \|\mathcal Tf\|_{L^s}^s
 \le\|\mathcal M(|\mathcal Tf|^{p_0})\|_{L^{s/p_0}}^{s/p_0}
 \lesssim\|G_f\|_{L^s}^s
 \lesssim\|f\|_{\dot L^s_{1,1/2}}^s.
\]
The argument for $p_1=\infty$ is the same, with the second term in
\eqref{E.localgoodlambda} absent once $B$ is large enough.
\end{proof}

We next put Theorem \ref{T.locest} into the form required in
\eqref{E.Toffdiag}. For this purpose we use rectangular cones in the fixed
graph coordinates. Write
\[
 Q_R(z):=\set{(y,s):|y_j-z_j|<R\ (1\le j\le n-1),\ |s-z_t|<R^2},
 \qquad z=(z_1,\dots,z_{n-1},z_t).
\]
Put $a_\square=2+a$, where $a$ is the aperture in \eqref{E.cone}. For
$h\in L^2_{\loc}(\om)$ define
\begin{equation}\label{E.graphmax}
 \mathcal N(h)(\pi(z)):=
 \sup_{\substack{(X,t)=(x,x_n,t)\in\om\\z\in Q_{a_\square\delta(X,t)}(x,t)}}
 \left(\fiint_{B_{\delta(X,t)/2}(X,t)}|h|^2\right)^{1/2}.
\end{equation}
The distance to the boundary is still the original parabolic distance, and
the Whitney averages have not changed. The graph bound gives
$x_n-\phi(x,t)\lesssim\delta(X,t)$, and consequently, for a fixed $a'>a$,
\begin{equation}\label{E.graphmaxcompare}
 \Nt_a(h)\le\mathcal N(h)\le\Nt_{a'}(h),\qquad
 \|\mathcal N(h)\|_{L^v(\pom)}\approx\|\Nt_a(h)\|_{L^v(\pom)},
 \quad 1<v<\infty.
\end{equation}
The norm comparison uses only the global change-of-aperture estimate. Thus
this auxiliary maximal function gives the same solvability estimates as
\eqref{E.Ntilde}. Its boundary shadow at $(X,t)$ is precisely
$\pi(Q_{a_\square\delta(X,t)}(x,t))$. This is the graph-coordinate version of the
enlarged-shadow construction in \cite[\S2.1]{DN}.

The remaining ingredient is the cone-volume estimate \eqref{E.conebd}.
It is here that the $L^1$ average on the right of
\eqref{E.locest} is used. With the $L^2$ average one would have to pass from
it to an $L^1$ one, which is the boundary weak reverse H\"older estimate for
gradients of \cite[Lemma 3.6]{DN}, proved there on a Lipschitz cylinder but not for our class of boundaries.

\begin{lemma}[off-diagonal reverse H\"older estimate]\label{L.offdiag}
Assume $(D_{\LL^*})_{p'}$ for some $1<p<\infty$. There are $\bar p>p$ and
$a_2>a_1>1$, none of them depending on $p_0$, such that for every $p_0>1$ the
following holds. If $w$ solves $\LL w=0$ and has constant boundary data on
$\pi(a_2Q)$, then
\begin{equation}\label{E.Nreverse}
 \left(\fiint_{\pi(Q)}\mathcal N(\nabla w)^{\bar p}\,\d\sigma\right)^{1/\bar p}
 \ \le\ C
 \left(\fiint_{\pi(a_1Q)}\mathcal N(\nabla w)^{p_0}\,\d\sigma\right)^{1/p_0}.
\end{equation}
Here $Q$ and its dilates are cubes in the fixed graph coordinates, and
$\mathcal N$ is the auxiliary maximal function \eqref{E.graphmax} in those
coordinates. The constants $a_1,a_2$ depend only on the geometry and the
fixed apertures.
\end{lemma}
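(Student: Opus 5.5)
Subtracting the constant boundary value, we may assume $w$ vanishes on $\pi(a_2Q)$; this changes neither $\nabla w$ nor $\mathcal N(\nabla w)$. Let $r$ be the radius of $Q$ and $z\in\pi(Q)$. We split $\mathcal N(\nabla w)(z)$ at height $\delta(X,t)\approx c r$ into a local part and a far part.

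\emph{The far part.} Consider Whitney balls with $\delta(X,t)\ge cr$ whose enlarged shadow $\pi(Q_{a_\square\delta}(x,t))$ contains $z$. For such a ball, every boundary point of $\pi(Q)$ lies in a shadow of a comparable ball of slightly larger aperture, since the shadow is much larger than $Q$. So the far part is bounded pointwise on $\pi(Q)$ by $\inf_{\pi(Q)}\mathcal N'(\nabla w)$, where $\mathcal N'$ uses a fixed larger aperture. This is where the enlarged rectangular shadows of \eqref{E.graphmax} are convenient. Because of the aperture change, I would compare $\mathcal N'$ with $\mathcal N$ on a slightly larger cube: a Whitney ball at height $d\ge cr$ seen from $z$ under the larger aperture is seen from a set of points in $\pi(a_1Q)$ of measure $\gtrsim |Q|$ under the original aperture, provided $a_1$ is chosen large. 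Hence this part is at most $\big(\fiint_{\pi(a_1Q)}\mathcal N(\nabla w)^{p_0}\big)^{1/p_0}$ for every $p_0>0$.

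\emph{The local part.} This is the contribution of Whitney balls with $\delta<cr$ meeting shadows over $\pi(Q)$. All these balls lie in a backward cylinder $J_{C r}(P,\tau)$ centred at a boundary point near $\pi(Q)$, shifted upward in time so that $J_{K_0Cr}\cap\pom\subset\pi(a_2Q)$; this fixes $a_2$ in terms of $K_0$. Apply Theorem \ref{T.locest} with $p$ replaced by $\bar p$. Here the openness of Dirichlet solvability is used: $(D_{\LL^*})_{p'}$ implies $(D_{\LL^*})_{q}$ for $q\in(p'-\epsilon,p']$, via the reverse H\"older self-improvement of the adjoint parabolic measure density. So we pick $\bar p$ with $\bar p'<p'$ close to $p'$, which gives $\bar p>p$ independent of $p_0$. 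This yields
\[
\Big(\fiint_{\pi(Q)}\mathcal N(\nabla w\mathbf 1_{J_{Cr}})^{\bar p}\Big)^{1/\bar p}\lesssim \fiint_{J_{K_0Cr}\cap\om}|\nabla w| .
\]
The right side is then bounded via \eqref{E.conebd} by $r^{-n-1}\|\Nt(\nabla w)\|_{L^1(\Delta_{C_1r})}$. Comparing $\Nt$ with $\mathcal N$ by \eqref{E.graphmaxcompare}, the local part is at most $\fiint_{\pi(a_1Q)}\mathcal N(\nabla w)$. Finally, we need the $L^{p_0}$ average rather than the $L^1$ average. For $p_0\ge1$ H\"older's inequality suffices. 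For $p_0$ below $1$ the statement only concerns $p_0>1$, so no extra argument is required. Covering $J_{Cr}$ by boundedly many cylinders handles the geometry, and $a_1$ is enlarged to absorb $C_1$ and $K_0$.

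\emph{The main obstacle.} I expect the hardest point to be the dependence of $\bar p$ on $(D_{\LL^*})_{p'}$ alone. This needs both the openness of adjoint Dirichlet solvability on $\Lip(1,\frac12)$ graph domains and a check that Lemma \ref{L.PD} holds at the nearby exponent. A second, more technical issue is arranging the time shift so that the zero-data cylinder of Theorem \ref{T.locest} lies inside $\pi(a_2Q)$ while still covering the local Whitney region; this is what forces $a_2>a_1$.
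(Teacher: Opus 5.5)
Your proposal is correct and follows essentially the same route as the paper. Both arguments use Gehring self-improvement of the adjoint kernel to get $(D_{\LL^*})_{\bar p'}$ with $\bar p>p$ independent of $p_0$, split $\mathcal N(\nabla w)$ at height comparable to $r$, apply Theorem \ref{T.locest} on a time-shifted backward cylinder whose $K_0$-dilate has boundary inside $\pi(a_2Q)$, and finish with \eqref{E.conebd}, $\Nt\le\mathcal N$ and H\"older.

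The only difference is that your larger-aperture detour in the far part is unnecessary. Your comparison of $\mathcal N'$ with $\mathcal N$ is valid only for balls already seen from a point of $\pi(Q)$ under the original aperture, and for such a ball the set $\pi\big(Q_{a_\square\delta(Z)}(x,t)\cap3Q\big)$ already has measure $\gtrsim\sigma(\pi(Q))$, with $\mathcal N(\nabla w)$ dominating the Whitney average on it; this is exactly the paper's argument.
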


\begin{proof}
For each time-compatible pole $Z$, let
$k^{*,Z}:=\d\omega^{*,Z}/\d\sigma$ be the adjoint parabolic Poisson kernel.
We write $k^{*,Z}\in B_v(\d\sigma)$ when the reverse H\"older estimate
\[
 \left(\fiint_\Delta |k^{*,Z}|^v\,\d\sigma\right)^{1/v}
 \ \lesssim\ \fiint_\Delta k^{*,Z}\,\d\sigma
\]
holds on the relevant time-lagged surface balls $\Delta$.
Solvability of $(D_{\LL^*})_{p'}$ gives
$k^{*,Z}\in B_p(\d\sigma)$ uniformly on the corresponding time-lagged
surface balls, the adjoint parabolic measure being doubling there by
\cite[Theorem 3.2]{N}. Gehring's lemma is available on that space of
homogeneous type and raises the exponent. Hence
$k^{*,Z}\in B_{\bar p}(\d\sigma)$ uniformly and
$(D_{\LL^*})_{\bar p'}$ is solvable
for some $\bar p>p$. This is the deduction made in \cite[\S5]{DN}, and the
weight theory behind it is \cite[Theorem 6.1]{N}. We may therefore apply
Theorem \ref{T.locest} with the exponent $\bar p$. Everything below rests on
the strict inequality $\bar p>p$, since \eqref{E.Toffdiag} is of no use to
Theorem \ref{T.SobolevShen} unless $p_1$ exceeds $s$.

Subtract the constant boundary value from $w$, write $Q=Q_r(x_Q,t_Q)$,
and split $\mathcal N(\nabla w)$ into the suprema $\mathcal N_{<r}$ and
$\mathcal N_{\ge r}$ over centres with $\delta<r$ and $\delta\ge r$,
respectively. We may assume that the right-hand side of \eqref{E.Nreverse}
is finite.\medskip

\noindent\emph{The away part of $\mathcal N$.} Let $Z=(x,x_n,t)$ be a centre contributing
to $\mathcal N_{\ge r}$ at a point of $\pi(Q)$, and put
\[
 E_Z:=\pi\big(Q_{a_\square\delta(Z)}(x,t)\cap3Q\big),\qquad
 H(Z):=\left(\fiint_{B_{\delta(Z)/2}(Z)}|\nabla w|^2\right)^{1/2}.
\]
The cube $Q_{a_\square\delta(Z)}(x,t)$ meets $Q$ and has radius at least $r$.
Its intersection with $3Q$ has spatial side lengths at least $r$ and time
length at least $r^2$. Thus $\sigma(E_Z)\gtrsim\sigma(\pi(Q))$.
Moreover, $\mathcal N(\nabla w)\ge H(Z)$ everywhere on $E_Z$.
For any fixed $a_1\ge3$ this gives
\[
 \sup_{\pi(Q)}\mathcal N_{\ge r}
 \lesssim\fiint_{\pi(a_1Q)}\mathcal N(\nabla w)\,\d\sigma.
\]
The constant is uniform in $Z$, even when $\delta(Z)/r$ is arbitrarily
large.\medskip

\noindent\emph{The near part of $\mathcal N$.} Every Whitney ball contributing to
$\mathcal N_{<r}$ at a point of $\pi(Q)$ is contained in a single backward
boundary cylinder $J_R(P_Q,\tau_Q)$, where
\[
 \tau_Q=t_Q+(a_\square^2+2)r^2,\qquad
 P_Q=(x_Q,\phi(x_Q,\tau_Q)),\qquad R=C_0r.
\]
Indeed, its time coordinates lie between
$t_Q-(a_\square^2+2)r^2$ and $\tau_Q$, and its spatial coordinates are within
$Cr$ of $P_Q$ by the graph bound. Taking $C_0=C_0(n,\ell,a_\square)$ large enough
gives the inclusion. Choose $a_1\ge3$ large enough to contain, in
$\pi(a_1Q)$, the surface ball used when \eqref{E.conebd} is applied to
$J_{K_0R}\cap\om$. Then choose $a_2>a_1$ large enough that
$J_{K_0R}\cap\pom\subset\pi(a_2Q)$. These choices do not depend on $p_0$.

The full Whitney-ball inclusion gives
$\mathcal N_{<r}\le\mathcal N(\nabla w\,\mathbf1_{J_R})$ on $\pi(Q)$.
Using the global norm comparison \eqref{E.graphmaxcompare}, Theorem
\ref{T.locest}, and $R\approx r$, we obtain
\begin{align*}
 \left(\fiint_{\pi(Q)}\mathcal N_{<r}^{\bar p}\,\d\sigma\right)^{1/\bar p}
 &\lesssim r^{-(n+1)/\bar p}
      \|\Nt(\nabla w\,\mathbf1_{J_R})\|_{L^{\bar p}(\pom)}\\
 &\lesssim\fiint_{J_{K_0R}\cap\om}|\nabla w|\\
 &\lesssim\fiint_{\pi(a_1Q)}\mathcal N(\nabla w)\,\d\sigma.
\end{align*}
The last inequality is \eqref{E.conebd} and $\Nt\le\mathcal N$.
Combining the two parts and applying H\"older's inequality to this last
boundary average proves \eqref{E.Nreverse} for every $p_0>1$.
\end{proof}

\begin{proposition}\label{P.dualqp}
Under the hypotheses of Theorem \ref{T.dual}, suppose that $(R_{\LL})_q$ is
solvable for some $1<q<p$. Then $(R_{\LL})_p$ is solvable.
\end{proposition}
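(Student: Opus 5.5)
We will apply Theorem \ref{T.SobolevShen} with $p_0=q$ and $p_1=\bar p$, where $\bar p>p$ is the exponent from Lemma \ref{L.offdiag}. By the reduction at the end of \S\ref{ss.bvp}, it suffices to verify \eqref{E.Rp} for data $g\circ\pi^{-1}$ with $g\in C^\infty_c(\R^{n-1}\times\R)$. Once this is done, the conclusion holds for every $s\in(q,\bar p)$, and in particular for $s=p$.

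\emph{The operator.} For such $g$ we let $u_g$ be the solution with datum $g\circ\pi^{-1}$, given by \eqref{E.pmsol}, and put
\[
 \mathcal Tg:=\mathcal N(\nabla u_g)\circ\pi ,
\]
with $\mathcal N$ as in \eqref{E.graphmax}. This is a function on $\R^{n-1}\times\R$, and $\sigma$ pulls back to a measure comparable to Lebesgue measure.

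\emph{The hypotheses \eqref{E.Tsublinear}.} Constants are reproduced by parabolic measure, since it is a probability measure, so $\mathcal T(g+c)=\mathcal Tg$. Linearity of $g\mapsto u_g$, together with the sublinearity of $\mathcal N$ (a supremum of $L^2$ Whitney averages), gives $|\mathcal Tg-\mathcal Th|\le\mathcal T(g-h)$. Data in $C^\infty_c+\R$ are covered because $u_{g+c}=u_g+c$.

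\emph{The hypothesis \eqref{E.Tpzero}.} The assumed solvability of $(R_\LL)_q$ gives $\|\Nt(\nabla u_g)\|_{L^q(\pom)}\lesssim\|g\|_{\dot L^q_{1,1/2}}$. Here we use Definition \ref{D.bdyspace}, by which the boundary norm is the pull-back norm through $\pi$. The comparison \eqref{E.graphmaxcompare} then converts this into the bound \eqref{E.Tpzero} for $\mathcal T$, with $p_0=q$.

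\emph{The hypothesis \eqref{E.Toffdiag}.} Suppose $g$ is constant on $a_2Q$. Then $u_g$ has constant boundary data on $\pi(a_2Q)$, and Lemma \ref{L.offdiag} applies with $p_0=q$. After the change of variables $\sigma\approx$ Lebesgue via $\pi$, it is exactly \eqref{E.Toffdiag} with $p_1=\bar p$, and the same $a_1,a_2$. The lemma requires $(D_{\LL^*})_{p'}$, which is a hypothesis of Theorem \ref{T.dual}. Since $q<p<\bar p$, we may take $s=p$ in \eqref{E.Ts}. This gives $\|\mathcal N(\nabla u_g)\|_{L^p}\lesssim\|g\|_{\dot L^p_{1,1/2}}$, hence \eqref{E.Rp} by \eqref{E.graphmaxcompare}, and then for all of $\mathcal R_p(\pom)$ by the density argument of \S\ref{ss.bvp}.

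\emph{The main obstacle.} The delicate point is making sure the hypotheses of Theorem \ref{T.SobolevShen} are met literally. First, $\mathcal Tg$ must be finite and measurable for $g\in C^\infty_c$. This follows from the $L^q$ bound already in hand. Second, the cubes in \eqref{E.Toffdiag} must be matched with the cubes in Lemma \ref{L.offdiag}. This is why $\mathcal N$ is defined with rectangular shadows in the fixed graph coordinates: Lemma \ref{L.offdiag} is stated exactly for $\pi(Q)$. Third, the measure comparison under $\pi$ is needed; it holds since $\pi$ preserves the time slices and the Jacobian of $\sigma$ on each slice is $\sqrt{1+|\nabla_y\phi|^2}\in[1,\sqrt{1+\ell_0^2}]$. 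Finally, the constants must depend only on the allowed data, which holds because $a_1,a_2,\bar p$ are independent of $p_0$.
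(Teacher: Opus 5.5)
Your proposal is correct and is essentially the paper's own proof: the same operator $\mathcal Tg=\mathcal N(\nabla u_g)\circ\pi$, Theorem~\ref{T.SobolevShen} with $p_0=q$ and $p_1=\bar p$ supplied by Lemma~\ref{L.offdiag}, the comparison \eqref{E.graphmaxcompare} to pass between $\mathcal N$ and $\Nt$, and the density reduction following Definition~\ref{D.Rp}. One minor correction: measurability of $\mathcal Tg$ does not follow from the $L^q$ bound, which presupposes it; it follows because $\mathcal N(h)\circ\pi$ is a supremum of Whitney averages multiplied by indicators of open cubes, and is therefore lower semicontinuous.
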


\begin{proof}
For $f\in C_c^\infty(\R^{n-1}\times\R)$ let $u_f$ be the parabolic-measure
solution with boundary datum $f\circ\pi^{-1}$, and set
\[
 \mathcal Tf:=\mathcal N(\nabla u_f)\circ\pi.
\]
Linearity of the solution operator gives \eqref{E.Tsublinear}, and constants
are annihilated. Surface measure in graph coordinates is comparable to
Lebesgue measure. Thus solvability of $(R_{\LL})_q$ and the global norm
comparison \eqref{E.graphmaxcompare} give \eqref{E.Tpzero} with $p_0=q$.

If $f$ is constant on $a_2Q$, then $u_f$ minus that constant has zero boundary
data on the corresponding boundary patch. Lemma \ref{L.offdiag} gives
\eqref{E.Toffdiag} with $p_0=q$ and $p_1=\bar p$. Since
$q<p<\bar p$, Theorem \ref{T.SobolevShen} gives
\[
 \|\Nt(\nabla u_f)\|_{L^p(\pom)}
 \le\|\mathcal N(\nabla u_f)\|_{L^p(\pom)}
 \lesssim\|f\|_{\dot L^p_{1,1/2}(\R^{n-1}\times\R)}.
\]
The reduction following Definition \ref{D.Rp} turns this into $(R_{\LL})_p$.
\end{proof}

We turn to the opposite order of the exponents (i.e., $p<q$). If $R>0$, we write $\Nt^R$
for the maximal function in \eqref{E.Ntilde} with the supremum restricted to
points for which $\delta(X,t)<R$.

\begin{lemma}[the Hardy--Sobolev endpoint]\label{L.atomendpoint}
Assume $(D_{\LL^*})_{p'}$ for some $1<p<\infty$ and $(R_{\LL})_q$ for some
$q>1$. Fix $\kappa>\max\{q,n+1\}$. Let $a$ be a smooth
$(1,\tfrac12,\kappa)$-atom associated with a parabolic ball $B=B_r(z_0)$,
and let $u_a$ be the parabolic-measure solution with datum $a\circ\pi^{-1}$.
Then
\begin{equation}\label{E.atomendpoint}
 \|\Nt(\nabla u_a)\|_{L^1(\pom)}\ \le\ C.
\end{equation}
The constant is independent of $a$, $B$ and $r$.
\end{lemma}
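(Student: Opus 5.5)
We split $\pom$ into a near region $\pi(\Delta^*)$, with $\Delta^*:=B_{Cr}(z_0)$ a fixed dilate of the support ball, and its complement.

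\emph{Near part.} Since $\kappa>q$, the atom satisfies $\|a\|_{\dot L^q_{1,1/2}}\lesssim|B|^{1/q-1/\kappa}|B|^{-1/\kappa'}=|B|^{-1/q'}$, by H\"older on the support for $\nabla_ya$. For $\dhalf a$ we argue as in the nonlocal tail estimate in the proof of Theorem \ref{T.finiteinterp}, using the $L^\kappa$ bound near $B$ and the $L^1$ decay $|\dhalf a(y,t)|\lesssim r^{-n}\,r^{3}|t-t_0|^{-3/2}$ away from it. Then $(R_\LL)_q$ and H\"older give
\[
\|\Nt(\nabla u_a)\|_{L^1(\pi(\Delta^*))}\lesssim r^{(n+1)/q'}\|\Nt(\nabla u_a)\|_{L^q}\lesssim 1 .
\]

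\emph{Far part: annular decomposition.} For $j\ge j_0$ let $R_j=2^jr$ and cover $\pi(B_{R_{j+1}}\setminus B_{R_j})$ by boundedly many boundary cubes $Q$ of radius $cR_j$, chosen so that $\pi(a_2Q)$ misses $\pi(B)$. The datum vanishes on $\pi(a_2Q)$.

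\emph{Near-boundary contribution on each cube.} We split $\Nt$ on $\pi(Q)$ into $\delta<cR_j$ and $\delta\ge cR_j$, as in the proof of Lemma \ref{L.offdiag}. For the part with $\delta<cR_j$, Theorem \ref{T.locest} (with exponent $p$) and H\"older give
\[
\int_{\pi(Q)}\Nt(\nabla u_a\mathbf 1_{J})\lesssim R_j^{n+1}\fiint_{J_{K_0R}\cap\om}|\nabla u_a| .
\]
By Caccioppoli and boundary H\"older, the right side is bounded by $R_j^{n}\sup_{J_{2K_0R}\cap\om}|u_a|$. The key estimate is then
\[
\sup_{J_{2K_0R}\cap\om}|u_a|\lesssim \|a\|_\infty\,\omega^{Z}(\pi(B))\lesssim r^{-n}(r/R_j)^{n+\beta}
\]
for points at distance $\approx R_j$. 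This uses Lemma \ref{L.atomsup} ($\kappa>n+1$) together with \eqref{E.omegadecay} and \eqref{E.boundaryCarleson}, applied to $u_a^\pm$, which are bounded by $\|a\|_\infty$ times the parabolic measure of $\pi(B)$. It produces a contribution $\lesssim 2^{-j\beta}$ for this piece.

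\emph{Obstacle and remaining contributions.} The main obstacle is that the decay \eqref{E.omegadecay} requires time-compatible poles. Points lying earlier in time than $B$ see $u_a=0$ identically, because $\omega^{(X,t)}$ is supported in $\{\tau\le t\}$. Points that are spatially close but much later in time are handled by the Gaussian bound \eqref{E.GreenGaussian} via the Green representation, which gives decay $(r/R_j)^{n+\beta}$ as well. The Whitney balls with $\delta\ge cR_j$ are controlled pointwise the same way: by interior Caccioppoli, $|\nabla u_a|\lesssim \delta^{-1}\sup|u_a|$ with the same decay, integrated over shadows of size $\delta^{n+1}$. Summing the resulting $\sum_j 2^{-j\beta}$ then gives \eqref{E.atomendpoint}. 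The truncated functional $\Nt^R$ is presumably used to justify finiteness during these steps.
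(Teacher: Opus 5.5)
Your proposal follows the paper's proof essentially step for step. The near part uses $(R_\LL)_q$ and H\"older after viewing the atom as a $(1,\tfrac12,q)$-atom; the paper cites \cite{DN} for this, while you verify it directly, but note that the tail is $|\dhalf a(y,t)|\lesssim r^{2-n}|t-t_0|^{-3/2}$, not $r^{3-n}$. The far part uses dyadic annuli split at height $cR_j$: below that height, Theorem \ref{T.locest} with exponent $p$, boundary Caccioppoli and the pole bound $\|a\|_\infty\,\omega^{Z}(\pi(B))\lesssim r^\beta R_j^{-n-\beta}$; above it, interior Caccioppoli.

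The one place to tighten is time-compatibility. The paper handles it uniformly: it applies \eqref{E.boundaryCarleson} to $u_\pm$, then uses a forward Harnack chain to a single time-forward corkscrew $Z_{R_j}$ placed later than all the cylinders. Your Green-function alternative instead needs the boundary decay \eqref{E.Greenbdry}. The Gaussian bound \eqref{E.GreenGaussian} alone gives only $(r/R_j)^n$, and each annulus would then contribute $O(1)$.
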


\begin{proof}
Put $E_B:=\pi(B)$. By \cite[Lemma 6.6]{DN}, a
$(1,\tfrac12,\kappa)$-atom is, up to a fixed constant, a
$(1,\tfrac12,q)$-atom. Consequently
\[
 \|a\|_{\dot L^q_{1,1/2}}\lesssim |B|^{-1/q'}.
\]
Solvability of $(R_{\LL})_q$ and H\"older's inequality therefore give
\begin{equation}\label{E.atomnear}
 \|\Nt(\nabla u_a)\|_{L^1(\pi(8B))}
 \ \lesssim\ |B|^{1/q'}\|a\|_{\dot L^q_{1,1/2}}\ \lesssim\ 1.
\end{equation}

Lemma \ref{L.atomsup} gives the pointwise Poincar\'e estimate
\eqref{E.atomsup}.

Write $a=a^+-a^-$ and let $u_+$ and $u_-$ be the nonnegative
parabolic-measure solutions with these data. Thus
$|u_a|\le u_++u_-$. For every $R\ge Cr$, choose a time-forward corkscrew
$Z_R$ at scale $R$ relative to the point $\pi(z_0)$. Positivity,
\eqref{E.atomsup}, and \eqref{E.omegadecay} give
\begin{equation}\label{E.atompole}
 u_+(Z_R)+u_-(Z_R)
 \ \le\ \|a\|_\infty\omega^{Z_R}(E_B)
 \ \lesssim\ r^\beta R^{-n-\beta}.
\end{equation}
Here we used that $\pi(B)$ is contained in a surface ball of radius $Cr$.
This pole estimate is the off-diagonal input.

Let $R_j=2^jr$ and
\[
 \mathcal A_j:=\pi(2^{j+1}B\setminus2^jB),\qquad j\ge3.
\]
Each $\mathcal A_j$ is covered by a bounded number of surface balls of radius
$cR_j$, with $c>0$ small. The cone portions above these balls with
$\delta<cR_j$ are then contained in a bounded number of backward boundary
cylinders of radius $cR_j$ whose $K_0$-fold dilates still do not meet
$E_B$, which fixes how small $c$ has to be. On each such cylinder $u_a$ has
zero boundary data. All cylinders in the next two displays have radii equal
to fixed multiples of this choice of $cR_j$. Theorem \ref{T.locest},
H\"older's inequality, Cauchy--Schwarz and boundary Caccioppoli give
\begin{align}
 \|\Nt^{cR_j}(\nabla u_a)\|_{L^1(\mathcal A_j)}
 &\lesssim R_j^{(n+1)/p'}R_j^{(n+1)/p}
       \fiint_{J_{CR_j}\cap\om}|\nabla u_a|\notag\\
 &\lesssim R_j^n\sup_{J_{C'R_j}\cap\om}|u_a|
 \ \lesssim\ r^\beta R_j^{-\beta}.
 \label{E.atomannlocal}
\end{align}
Here and below a finite sum over the covering cylinders is suppressed. To
justify the last inequality, apply \eqref{E.boundaryCarleson} separately to
$u_+$ and $u_-$, choosing its terminal corkscrew later in time than all the
enlarged cylinders. A fixed additional Harnack chain at scale $R_j$ gives
\[
 \sup_{J_{C'R_j}\cap\om}|u_a|
 \lesssim u_+(Z_{R_j})+u_-(Z_{R_j})
 \lesssim r^\beta R_j^{-n-\beta}
\]
by \eqref{E.atompole}.

For the remaining cone portion, interior Caccioppoli applied on the Whitney
ball gives the same estimate. Indeed, if its centre has
$d=\delta(X,t)\ge cR_j$, local boundedness and a time-forward Harnack chain at
scale $d$ give
\[
 \left(\fiint_{B_{d/2}(X,t)}|\nabla u_a|^2\right)^{1/2}
 \lesssim d^{-1}\big(u_+(Z_d)+u_-(Z_d)\big)
 \lesssim r^\beta d^{-n-\beta-1}.
\]
Taking the supremum over $d\ge cR_j$ gives
\[
 \Nt\big(\nabla u_a\mathbf1_{\{\delta\ge cR_j\}}\big)
 \ \lesssim\ r^\beta R_j^{-n-\beta-1}.
\]
Since $\sigma(\mathcal A_j)\lesssim R_j^{n+1}$, this part also has $L^1$ norm
at most $Cr^\beta R_j^{-\beta}$. Adding it to \eqref{E.atomannlocal} we have
proved
\begin{equation}\label{E.atomannulus}
 \|\Nt(\nabla u_a)\|_{L^1(\mathcal A_j)}\ \lesssim\ 2^{-j\beta}.
\end{equation}
Summing \eqref{E.atomannulus} over $j\ge3$ and adding \eqref{E.atomnear}
proves \eqref{E.atomendpoint}.
\end{proof}

\begin{proposition}\label{P.dualqabove}
Under the hypotheses of Theorem \ref{T.dual}, suppose that $(R_{\LL})_q$ is
solvable for some $q>p$. Then $(R_{\LL})_p$ is solvable.
\end{proposition}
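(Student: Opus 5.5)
The plan is to apply the finite-atom interpolation, Theorem \ref{T.finiteinterp}, to the operator
\[
 \mathfrak Tf:=\Nt(\nabla u_f)\circ\pi ,
\]
where $u_f$ is the parabolic-measure solution with datum $f\circ\pi^{-1}$ and $f\in C_c^\infty(\R^{n-1}\times\R)+\R$. We work with the exponents $s=p<q$. Linearizability is immediate, since $u_f$ depends linearly on $f$. The Whitney average in \eqref{E.Ntilde} is the $L^2(B_{\delta/2}(X,t);\R^n)$ norm of $\nabla u_f$, and the supremum over the cone is the norm in an $\ell^\infty$-type space of such functions indexed by the cone points. Hence $\mathfrak Tf=\|\mathfrak Uf\|_{\mathcal B}$ with $\mathfrak U$ linear. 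Constants are annihilated because the solution with constant datum is that constant (Theorem \ref{T.pmeasure}).

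The two hypotheses of Theorem \ref{T.finiteinterp} are checked as follows. First, fix $\kappa>\max\{q,n+1\}$, so that $1<q\le\kappa<\infty$. Since surface measure in graph coordinates is comparable to Lebesgue measure, solvability of $(R_\LL)_q$ gives
\[
 \|\mathfrak Tf\|_{L^q}\lesssim\|f\|_{\dot L^q_{1,1/2}} .
\]
Second, for every smooth $(1,\tfrac12,\kappa)$-atom $a$, Lemma \ref{L.atomendpoint} gives $\|\Nt(\nabla u_a)\|_{L^1(\pom)}\le C$. After the same change of measure this is $\|\mathfrak Ta\|_{L^1}\le C_1$. That lemma uses exactly $(D_{\LL^*})_{p'}$ and $(R_\LL)_q$, which are the present hypotheses. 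Theorem \ref{T.finiteinterp} with $s=p<q$ then yields
\[
 \|\Nt(\nabla u_f)\|_{L^p(\pom)}\lesssim\|f\|_{\dot L^p_{1,1/2}}
 \qquad\text{for } f\in C^\infty_c .
\]
The reduction following Definition \ref{D.Rp} upgrades this to $(R_\LL)_p$.

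The main obstacle I expect is that $\mathfrak T$ must genuinely be defined and sublinear on the relevant classes. Theorem \ref{T.finiteinterp} extends $\mathfrak T$ by $L^q$ continuity, so the estimate obtained there is for the extended operator. This is harmless, since we only need the estimate for smooth compactly supported data. A second point to verify is that the atoms to which Lemma \ref{L.atomendpoint} is applied are exactly the smooth atoms required in Theorem \ref{T.finiteinterp}, with the same $\kappa$. Both results use the same Definition \ref{D.atom}, so they match. If linearizability raised concerns because of the supremum in $\Nt$, I would replace $\Nt$ by the auxiliary $\mathcal N$ of \eqref{E.graphmax}, using the comparison \eqref{E.graphmaxcompare}. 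For $\mathcal N$ the same $\ell^\infty(L^2)$ representation is explicit.
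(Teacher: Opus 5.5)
Your proposal is correct and follows the paper's proof essentially step for step. Both use the operator $\Nt(\nabla u_f)\circ\pi$ with $\kappa>\max\{q,n+1\}$, take the $L^q$ bound from $(R_\LL)_q$ and the atomic $L^1$ bound from Lemma \ref{L.atomendpoint}, apply Theorem \ref{T.finiteinterp} with $s=p$, and finish with the reduction after Definition \ref{D.Rp}. Your $\ell^\infty(L^2)$ representation for linearizability differs only cosmetically from the paper's duality-pairing argument.
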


\begin{proof}
Fix $\kappa>\max\{q,n+1\}$ and put
\[
 \mathcal Tf:=\Nt(\nabla u_f)\circ\pi.
\]
Solvability of $(R_{\LL})_q$ gives
\[
 \|\mathcal Tf\|_{L^q}\lesssim\|f\|_{\dot L^q_{1,1/2}}.
\]
Lemma \ref{L.atomendpoint} gives $\|\mathcal Ta\|_{L^1}\lesssim1$ for every
smooth $(1,\tfrac12,\kappa)$-atom.

The operator $\mathcal T$ is linearizable. Indeed, after choosing measurably
the Whitney region in the supremum defining $\Nt$, its $L^2$ average is the
supremum of the pairings with $L^2$ vector fields of norm one. Each such
pairing is linear in the solution and hence in the datum. Theorem
\ref{T.finiteinterp}, with $s=p$, now gives
\[
 \|\Nt(\nabla u_f)\|_{L^p(\pom)}
 \ \lesssim\ \|f\|_{\dot L^p_{1,1/2}(\R^{n-1}\times\R)}.
\]
The reduction following Definition \ref{D.Rp} turns this into
$(R_{\LL})_p$.
\end{proof}

Propositions \ref{P.dualqp} and \ref{P.dualqabove}, together with the trivial
case $q=p$, prove Theorem \ref{T.dual}.

\end{document}